\documentclass[reqno]{amsart}
\usepackage{latexsym}
\usepackage{euscript}
\usepackage{amssymb}
\usepackage{array}      % for picture environment 
\usepackage{enumerate}   % for  \begin{enumerate}[(a)] or \begin{enumerate}[EX i.]

\setlength{\textwidth}{5.4in} 
\setlength{\textheight}{8.0 in}
\hoffset=-.24in 
\voffset=-.2in

\newtheorem{Thm}[equation]{Theorem}
\newtheorem{Lem}[equation]{Lemma}
\newtheorem{Cor}[equation]{Corollary}
\newtheorem{Prop}[equation]{Proposition}

\theoremstyle{remark}
\newtheorem{Notation}[equation]{\bf Notation}
\newtheorem{Rem}[equation]{Remark}

\newtheorem{Def}[equation]{Definition}

\numberwithin{equation}{section}

%\renewcommand{\theenumi}{\alph{enumi}}

%NEW COMMANDS:

% Roman (used in math mode)
               % Use for exponentials.
\newcommand{\R}{\text{\bf R}}           % Use for real numbers.
\newcommand{\C}{\text{\bf C}}           % Use for complex numbers.
\newcommand{\N}{\text{\bf N}}           % Use for positive integers.
           % Use for integers.

\newcommand{\Ad}{\text{Ad}}

\newcommand{\rank}{\text{rank}}
\newcommand{\spa}{\text{span}}

\newcommand{\Stab}{\text{Stab}}
\newcommand{\gr}{\text{gr}}

% Fraktur fonts
\newcommand{\f}[1]{{\mathfrak{#1}}}           % Fraktur
             % more Fraktur
\newcommand{\fb}{{\mathfrak b}}
\newcommand{\fg}{{\mathfrak g}}
\newcommand{\fh}{{\mathfrak h}}
\newcommand{\fk}{{\mathfrak k}}
\newcommand{\fl}{{\mathfrak l}}

\newcommand{\fn}{{\mathfrak n}}
\newcommand{\fnbar}{{\overline{\mathfrak n}}}
\newcommand{\fp}{{\mathfrak p}}
\newcommand{\fq}{{\mathfrak q}}

\newcommand{\fu}{{\mathfrak u}}

\newcommand{\fB}{{\mathfrak B}}
\newcommand{\fF}{{\mathfrak F}}

% Often used greek
\newcommand{\ga}{\alpha}
\newcommand{\gb}{\beta}

\newcommand{\gl}{\lambda}
\newcommand{\gL}{\Lambda}

\newcommand{\gD}{\Delta}
\renewcommand{\ge}{\epsilon}
\renewcommand{\ge}{\epsilon}
\newcommand{\tge}{\tilde{\epsilon}}

\newcommand{\gs}{\sigma}

\newcommand{\gt}{\theta}

%Scripty
\newcommand{\Cal}{\mathcal}
\newcommand{\Eul}{\EuScript}
\newcommand{\fQ}{{\Eul Q}}

\renewcommand{\bar}[1]{\overline{#1}}
\newcommand{\IC}{{\bf{I^{^{\bullet}}\!C}}}

%% Very special definitions:
\newcommand{\IP}[2]{\langle#1\,, #2\rangle}     %inner product.

%NEW
\newcommand{\Cl}{\Cal C\hskip-1pt\ell}
\newcommand{\sym}{\mathrm{sym^\dagger}}
\newcommand{\Cg}{\Eul C_g}
\newcommand{\Chc}{\Eul C_{HC}}
\newcommand{\supp}{\text{supp}}
\newcommand{\ann}{\text{ann}}
\newcommand{\fX}{{\Eul X}}
\usepackage{mathdots}  %% for stuff like  \iddots
\newcommand{\cW}{{\Cal W}}

%%%%%%%%%%%%%%%%%%%%%%%%%%%%%%%%%%%
%%%%%%%%%%%%%%%%%%%%%%%%%%%%%%%%%%%

\begin{document}

\parskip=3pt 

\title[]{Characteristic cycles of highest weight Harish-Chandra modules for $Sp(2n,\R)$}
\author{L. Barchini}
\author{R. Zierau}

\address{Oklahoma State University \\
   Mathematics Department \\
   Stillwater, Oklahoma 74078}
      \email{leticia@math.okstate.edu}
      \email{zierau@math.okstate.edu}
 
\thanks{Research supported by  NSA grant H98230-13-1-0268}
\subjclass[2010]{22E46}   
\keywords{Harish-Chandra module, highest weight module, symplectic group, characteristic cycle}         

%\date{\today}

\maketitle

%{\small\tableofcontents}

%%%%%%%%%%

\begin{abstract} Characteristic cycles, leading term cycles, associated varieties and Harish-Chandra cells are computed for the family of highest weight Harish-Chandra modules  for $Sp(2n,\R)$ having regular integral infinitesimal character.
\end{abstract}

\section*{Introduction}

Two important invariants of Harish-Chandra modules are the associated cycle and the characteristic cycle.  For example, a conjecture of Vogan and its proof by Schmid and Vilonen  (\cite{SchmidVilonen00}) provides deep connections between the algebraically defined associated cycle and the global character of an admissible representation.  However, there is no known method for computing either invariant in any generality.  In this article characteristic cycles are computed for the family of highest weight Harish-Chandra modules having regular integral infinitesimal character for $Sp(2n,\R)$.  

Associated cycles for the unitary highest weight Harish-Chandra modules are known for the classical groups (\cite{NishiyamaOchiaiTaniguchi01}).  The characteristic cycles for the unitary highest weight Harish-Chandra modules of regular integral infinitesimal character are all just the conormal bundle of the support, since they are cohomologically induced from a one dimensional representation, so the support has smooth closure.   It is well-known that characteristic varieties for (nonunitary) highest weight Harish-Chandra modules need not be irreducible; low dimensional examples are easy to find.  In this article we determine the characteristic cycles of all highest weight Harish-Chandra modules with regular integral infinitesimal character for $Sp(2n,\R)$; the statement is contained in Theorem \ref{thm:main}.  In addition, associated varieties and leading term cycles are computed, and the Harish-Chandra cells are described.   The statements are given in terms of clans, which parametrize the $K$-orbits in the flag variety, a subset of which parametrize the highest weight Harish-Chandra modules.

The method used to understand the characteristic cycles, Harish-Chandra cells, etc., is inductive in nature.  Each clan for a highest weight Harish-Chandra module (of infinitesimal character $\rho$) for $Sp(2n,\R)$ is obtained easily from such clans for $Sp(2(n-1),\R)$.  Characteristic cycles, leading term cycles and Harish-Chandra cells (and all other information) is given in terms of the same information for $Sp(2(n-1),\R)$.

There is some overlap between this paper and the results of \cite{BoeFu97}.  The intersection homology sheaves for $B$-orbit closures in the generalized flag variety of lagrangian planes is considered in \cite{BoeFu97}.  In that article an algorithm to compute characteristic cycles of these intersection homology sheaves is given.  Although it is not immediate, this algorithm can be related to the computation of characteristic cycles of highest weight Harish-Chandra modules considered in the present article. We do not make this connection, but use very different methods.

The organization of this paper is as follows.  The first section gives some well-known generalities on characteristic cycles and highest weight Harish-Chandra modules.  In section two we review the clan notation, determine the clans for the highest weight Harish-Chandra modules and express the $T_{\ga\gb}$ operators in the terms of clans.  Section 3 determines the Harish-Chandra cells.  The fourth section contains a key lemma relating the characteristic cycles corresponding to a particular clan for $Sp(2n,\R)$ to characteristic cycles for $Sp(2(n-1),\R)$.  The computation of all characteristic cycles is given in Section 5.

\noindent\emph{Acknowledgement}  The authors have used the ATLAS software as a valuable tool for computing examples.  Although this paper is independent of the software, computations using the ATLAS have played a significant role in this project.

%%%%%%%%%%%%%%%%%%%%%%%%%%%%%%%%%%%%%%%%%%%%%%%%%%

\section{Preliminaries}
\subsection{The symplectic group}\label{subsec:sp} For this article we consider the pair of complex groups  $(G,K)=(Sp(2n),GL(n))$.  The real group in the background is $Sp(2n,\R)$.  We use the $n\times n$ matrix
\begin{equation*}\label{eqn:S}
    S_n:= \begin{pmatrix}  && 1  \\ &
    \iddots &  \\1&&
     \end{pmatrix},
\end{equation*}
to define
\begin{equation}\label{eqn:spf}
J:=\begin{pmatrix}
   0_n & S_n \\  -S_n & 0_n \end{pmatrix}.
\end{equation}  
The realization of the complex group $G=Sp(2n)$ that we use is
$$G:=\{g\in M_n(\C) : g^tJg=J\}.
$$
The matrix $S_n$ gives an `antidiagonal transpose' $^\dagger\!X=S_nX^tS_n$;  we let $\sym(n)=\{A\in M_n:A=^\dagger\!\!A\}$.  
The Lie algebra of $G$ is then
\begin{equation*}
\fg=\left\{\begin{pmatrix}
   A & B \\  C & -^\dagger\!A \end{pmatrix}  : B,C\in \sym(n)\right\}.
\end{equation*}
The subgroup $K$ is the fixed point set of the involution $\gt=\Ad(I_{n,n})$, with 
\begin{equation*}I_{n,n}:=\begin{pmatrix}
   I_n & 0 \\  0 & -I_n \end{pmatrix}.
\end{equation*}  
Therefore,
\begin{equation*}
K=\left\{\begin{pmatrix}
   a& 0 \\  0 & ^\dagger a^{-1} \end{pmatrix} : a\in GL(n)\right\} \text{ and }\fk=\left\{\begin{pmatrix}
   A & 0 \\  0 & -^\dagger\!A \end{pmatrix}\right\}.
\end{equation*}  

The orthogonal complement of $\fk$ (with respect to the Killing form) is 
$$\fp:=\left\{\begin{pmatrix}
   0 & B \\  C & 0 \end{pmatrix}  : B,C\in \sym(n)\right\}.
$$  
This gives the decomposition 
$\fg=\fk+\fp$; $\fp$ decomposes into the direct sum of two irreducible $K$-subrepresentations,
\begin{equation}\label{eqn:ht}
\fp=\fp_+\oplus\fp_{-}
\end{equation}
with
\begin{equation*}
\fp_+=\left\{\begin{pmatrix}
   0 & B \\  0 & 0  
\end{pmatrix}:B\in \sym(n)\right\}
\end{equation*}
and $\fp_-$ the transpose of $\fp_+$.

The diagonal matrices in $\fg$, i.e., those  matrices of the form $H_A:=\begin{pmatrix} A & 0 \\ 0 & -^\dagger A\end{pmatrix}\in \fk$ with 
$$A=\begin{pmatrix}
   t_1 & &  \\\ &\ddots & \\ && t_n \end{pmatrix},
$$
form a  Cartan subalgebra $\fh$ of both $\fk$ and $\fg$.  Let $\ge_j\in\fh^*$ be defined by $\ge_j(H_A)=t_j$.  Then the roots are $\{\ge_i-\ge_j:1\leq i,j\leq n, i\neq j\}\cup\{\pm(\ge_i+\ge_j):1\leq i\leq j\leq n\}$.  We once and for all fix the positive system
\begin{equation}\label{eqn:ps}
\gD^+=\gD^+(\fh,\fg)=\{\ge_i-\ge_j:1\leq i<j\leq n\}\cup\{\ge_i+\ge_j:1\leq i\leq j\leq n\}.
\end{equation}
The simple roots are 
\begin{equation}\label{eqn:simple}
S:=\{\ga_j=\ge_j-\ge_{j+1}:j=1,2,\dots,n-1\}\cup\{\ga_n=2\ge_n\}.
\end{equation}
The set of roots in $\fp_+$ is $\gD(\fp_+)=\{\ge_i+\ge_j:1\leq i\leq j\leq n\}$.

We fix a Borel subalgebra $\fb=\fh+\sum_{\ga\in\gD^+}\fg^{(\ga)}$.  This is the Lie algebra of upper triangular matrices in $\fg$.  The connected subgroup of $G$ with Lie algebra $\fb$ is a Borel subgroup denoted by $B$.

The Weyl group $W$ consists of all permutations and sign changes of $\{\ge_i\}$.  This may be expressed in several ways.  We will usually write elements of $W$ as 
\begin{equation}\label{eqn:w-short}
w=(w_1w_2\dots w_n), \text{ when } w(\ge_j)=
\begin{cases}\ge_{w_j}, &\text{if }w_j>0  \\
              -\ge_{-w_j},  &\text{if }w_j<0.
\end{cases}
\end{equation}

Many of our arguments will be inductive in nature,  reducing to the smaller pair $(G',K')=(Sp(2(n-1),GL(n-1))$.  The group $G'$ is embedded in $G$ so that a Cartan subalgebra is $\fh'=\{H\in \fh : \ge_1(H)=0\}$ and $\fg'=\fh'+\sum_{\ga\in\gD(\fh,\fg),\IP{\ga}{\ge_1}=0}\fg^{(\ga)}$.  We use the notation $K'$ for $K\cap G'$ and $B'$ for $B\cap G'$.  Similar notation is used for other subgroups of $G'$ and various subalgebras of its Lie algebra $\fg'$.

%%%

\subsection{Characteristic cycles}\label{subsec:1-cc}  Let $\Cal M(\fg,K)$ be the category of finitely generated $(\fg,K)$- modules of infinitesimal character $\rho$ and let $\Cal M_c(\Cal D_\fB,K)$ be the category of coherent $K$-equivariant $\Cal D_\fB$-modules on the flag variety $\fB$. Localization gives an equivalence of these two categories (\cite{BeilinsonBernstein81}).  The definition of the characteristic cycle of a $\Cal D$-module, along with the first properties, is contained in \cite{BorhoBrylinski85}.  Included there are the following facts.  Let $\fX\in\Cal M_c(\Cal D_\fB,K)$.
\begin{enumerate}[(a)]
\item The characteristic cycle of $\fX$ is of the form
$$CC(\fX)=\sum_{\fQ\in K\backslash\fB} m_\fQ[\bar{T_\fQ^*\fB}],
$$
viewed as an element of top degree Borel-Moore homology of the conormal variety $\displaystyle{\cup_{\fQ\in K\backslash\fB} \bar{T_\fQ^*\fB}}$.  The $m_\fQ$'s are nonnegative integers called the \emph{multiplicities}.
\item In the above formula, if $m_\fQ\neq 0$, then $\fQ\subset\supp(\fX)$.  In fact $\fQ$ is in the singular locus of $\supp(\fX)$.
\item If $\fX$ is irreducible, the support of $\fX$ is the closure of a single $K$-orbit $\fQ\subset\fB$ and $m_\fQ=1$.
\item If $\supp(\fX)=\bar{\fQ}$ is a smooth subvariety of $\fB$, then $CC(\fX)=[\bar{T_\fQ^*\fB}]$.
\end{enumerate}

\begin{Notation}  Given $X\in\Cal M(\fg,K)$ we write $\fX$ for its localization, a $\Cal D$-module on $\fB$.  We refer to the support of $\fX$ as the support of $X$.  We also write $CC(X)$ for the characteristic cycle of the localization $\fX$ of $X$.  A Harish-Chandra module written as $X_\fQ$ is assumed to have support $\bar{\fQ}$.  
\end{Notation}

The associated variety of a Harish-Chandra module is defined in \cite{Vogan91}.  It is a union of $K$-orbits in $\fp$.  The characteristic cycle and associated variety are related through the moment map $\mu:T^*\fB\to\Cal N$.  If $CC(X)=\sum m_\fQ[\bar{T_\fQ^*\fB}]$, then
\begin{equation*}
  AV(X)=\bigcup_{m_\fQ\neq 0} \mu(\bar{T_\fQ^*\fB}).
\end{equation*}
The \emph{leading term cycle} is defined to be
$$
LTC(X)=\sum m_\fQ[\bar{T_\fQ^*\fB}],$$
summing over all $\fQ$ with $\dim(\mu(\bar{T_\fQ^*\fB})) =\dim(AV(X))$.

In the category $\Cal M(\fg,B)$ of finitely generated $(\fg,B)$-modules of infinitesimal character $\rho$, the picture is entirely similar.  The supports of the localizations are the closures of $B$-orbits in $\fB$, the Schubert varieties.  We denote the $B$-orbit of $w\cdot\fb$ by $B_w$; the Schubert variety $\bar{B_w}$ is denoted by $Z_w$.  The moment map image $\mu(\bar{T_{B_w}^*\fB})$ is the orbital variety $\bar{B\cdot\fn\cap\fn^w}$, $\fn^w:=\Ad(w)\fn$.  There is an equivalence of categories between $\Cal M(\fg,B)$ and the category $\Cal M_c(\Cal D_\fB,B)$ of coherent $B$-equivariant $\Cal D$-modules on $\fB$.  Facts (a)-(d) hold in this setting, along with the additional fact that if $\bar{T_{B_y}^*\fB}$ occurs in $CC(L_w)$, then the $\tau$ invariant of $y$ contains the $\tau$-invariant of $w$. The associated variety is a union of orbital varieties.

It will be important for us to to view highest weight Harish-Chandra modules as lying in \emph{both} categories $\Cal M(\fg,K)$ and $\Cal M(\fg,B)$.  The characteristic cycle of a $\Cal D_\fB$-module is defined independent of which category we are in.  This situation is discussed further in \S\ref{subsec:bo} and \S\ref{sec:lemma}.

%%%
   
\subsection{Highest weight Harish-Chandra modules}\label{subsec:hwt-hc} The group $G_\R$ has infinite dimensional irreducible representations having Harish-Chandra modules with highest weight vectors.  In general a connected, simple Lie group has such representations when $G_\R$ is of hermitian type.  

Let $\fp=\fp_+\oplus\fp_-$ be as in (\ref{eqn:ht}).  In (\ref{eqn:ps}) we have fixed a positive system of roots $\gD^+=\gD^+(\fh,\fg)$ which contains $\gD(\fp_+)$.  If $X$ is a highest weight Harish-Chandra module, then $X$ contains a vector $v^+$ annihilated by all root vectors $X_\gb$, $\gb\in\gD(\fp_+)$ or annihilated by all $X_\gb$, $\gb\in\gD(\fp_-)$.   We consider highest weight Harish-Chandra modules with respect to $\gD^+$, i.e., those with weight vectors annihilated by $X_\gb$, $\gb\in\gD(\fp_+)$.

We are concerned with Harish-Chandra modules of infinitesimal character 
\begin{equation*}
  \rho=\frac12\sum_{\ga\in\gD^+}\ga=(n,n-1,\dots,2,1)
\end{equation*}
The irreducible highest weight modules of infinitesimal character $\rho$ are parameterized by  the Weyl group $W$; we write  $L_{-w\rho}$ for the irreducible quotient of 
$$
\Cal U(\fg)\underset{\Cal U(\fb)}{\otimes}\C_{-w\rho-\rho}.
$$
For a highest weight Harish-Chandra module $v^+$ is annihilated by $X_\gb\in\gD_c^+\subset\gD^+$ and is $K$-finite, so its weight $-w\rho-\rho$ is $\gD_c^+$-dominant.  When $-w\rho-\rho$ is $\gD_c^+$-dominant, $L_{-w\rho}$ is the irreducible quotient of 
$$
\Cal U(\fg)\underset{\Cal U(\fk+\fp_+)}{\otimes}E_{-w\rho-\rho},
$$
where $E_{-w\rho-\rho}$ is the irreducible finite dimensional representation of $\fk$ of highest weight $-w\rho-\rho$.  We conclude that $L_{-w\rho}$ is a Harish-Chandra module exactly when $-w\rho$ is $\gD_c^+$-dominant.

\begin{Def}\label{def:our-w}
$\cW:=\{w\in W : -w\rho\text{ is $\gD_c^+$-dominant}\}.$
\end{Def}
\noindent It follows that $^\#\cW=^\#\!\!\left(W/W_c\right)=2^n$.  Note that 
\begin{align*}
&\text{if } w=(\text{-}1\,\text{-}2\,\dots\,\text{-}n), \text{ then } L_{-w\rho}=\C,\text{ the trivial representation, and}  \\
&\text{if }w=(n\,\dots \, 2\,1),\text{ then } L_{-w\rho}\text{ is in the holomorphic discrete series}.
\end{align*}
$\cW$ may be described as those $w=(w_1\dots w_n)\in W$ with entries $-1,-2,\dots,-k$ appearing in decreasing order (from left to right) and $n,n-1,\dots,n-k+1$ also appearing in decreasing order, for some $k=0,1,\dots,n$.  For example, when $n=3$, $$\cW=\left\{(321),(\text{-}132),(3\text{-}12),(32\text{-}1),(\text{-}1\text{-}23),(\text{-}13\text{-}2),(3\text{-}1\text{-}2),(\text{-}1\text{-}2\text{-}3)\right\}.$$

It follows from the definition that the associated variety of a highest weight Harish-Chandra module is contained in $\fp_+$; see \cite{NishiyamaOchiaiTaniguchi01}.  The $K$-orbits in $\fp_+$ have a particularly nice form.  They are 
\begin{equation}\label{eqn:Korbit}
\Cal O_k=\left\{\begin{pmatrix} 0 & X \\ 0 & 0 \end{pmatrix}; X\in\sym(n), \rank(X)=k\right\}, k=0,1,\dots n.
\end{equation}
Since $\Cal O_k\subset\bar{\Cal O}_{k+1}$, the associated variety of any irreducible highest weight Harish-Chandra module is the closure of exactly one $\Cal O_k$.

%%%%%%%%%%%%%%%%%%%%%%%%%%%%%%%%%%%%%%%%%%%%%%%%%%

\section{$K$-orbits and Schubert varieties}
\subsection{Clans}\label{subsec:clans} 
 Many of the results and arguments of this article are expressed in terms of clans.  Clans give a parametrization of $K$-orbits in the flag variety for a given classical group.  In this section we review the clan notation and some basic facts that we will need.  In \cite{MatsukiOshima88}, $K$-orbits in $\fB$ are classified by signed involutions. We follow the description of clans  given in \cite{Yamamoto97}, which is in terms of flags.

For the pair $(G,K)=(Sp(2n),GL(n))$ the clans are $2n$-tuples $c=(c_1,\dots,c_{2n})$ satisfying the following.\\
\indent (a) Each $c_i$ is $+,-$ or a natural number.  \\
\indent (b) If $c_i\in\N$, then $c_j=c_i$ for exactly one $j\neq i$.  \\
\indent (c) The number of $+$'s that occur among the $c_i$'s is the same as the number of $-$'s that occur.  \\
\indent (d) The following symmetry holds: (i) 
$\text{if } c_i=\pm, \text{ then }c_{2n-i+1}=\mp \text{ and (ii) }
  \text{if }c_i=c_j\in\N, \text{ then }c_{2n-i+1}=c_{2n-j+1}\in\N.$ \\
Two clans are considered the same when they have $+$, $-$ and pairs of equal natural numbers in the same positions.  Note that the number of $+$'s plus the number of pairs of natural numbers is $n$.

An example of a clan for $n=8$ is $(+1-2+2+-|+-3-3+1-)$.

The clan encodes a $\gt$-stable Cartan subalgebra $\tilde{\fh}$, the action of $\gt$ on $\tilde{\fh}$ and a positive system $\tilde{\gD}^+$.  This data determines a $K$-orbit in $\fB$ having base point $\tilde{\fh}+\sum_{\tilde{\ga}\in\tilde{\gD}^+}\fg^{(\tilde{\ga})}$.  This is in fact a bijection between clans and $K$-orbits in $\fB$.  In Section \ref{subsec:sp} we fixed a Cartan subalgebra $\fh$ ($\subset\fk$) and a positive system $\gD^+$, and thus a Borel subalgebra $\fb=\fh+\fn$.  The $K$-orbit $K\cdot\fb$, which consists of all isotropic flags $\{0\}=F_0\subsetneq F_1\subsetneq\dots\subsetneq F_n=\C^n\times\{0\}$, corresponds to the clan $(++\cdots +|-\cdots--)$.

Given a clan, it is important for us to understand the action of $\gt$ on (simple) roots.  Suppose that $c$ corresponds to $\tilde{\fh},\tilde{\gD}^+$.  The simple roots in $\tilde{\gD}^+$ may be written as 
$$\tilde{\ga}_i=\tge_i-\tge_{i+1}, i=1,2,\dots,n-1, \text{ and }\tilde{\ga}_n=2\tge_n,
$$
for some basis $\{\tge_j\}$ of $\tilde{\fh}^*$.
Then the action of $\gt$ is given by
\begin{equation*}
\gt(\tge_i)=\begin{cases}
  \tge_i,  &\text{if }c_i=\pm  \\
  \tge_j,  &\text{if }c_j=c_i \text{ and }j\leq n  \\
  -\tge_{2n-j+1},  &\text{if }c_j=c_i\text{ and }j\geq n+1.
  \end{cases}
\end{equation*}    
For the example of the clan given above, this is 
$$ \gt(\tge_2)=-\tge_2, \gt(\tge_4)=\tge_6,\text{ and } \gt(\tge_6)=\tge_4,\text{ and }\gt(\tge_i)=\tge_i, \text{for }i=1,3,5,7,8.
$$
This gives
\begin{align*}
&\gt(\tilde{\ga}_1)=\tge_1+\tge_2, \; \gt(\tilde{\ga}_2)=-\tge_2-\tge_3, \;  \gt(\tilde{\ga}_3)=\tge_3-\tge_6,  \; \gt(\tilde{\ga}_4)=\tge_6-\tge_5, \\ 
&\gt(\tilde{\ga}_5)=\tge_5-\tge_4, \;  \gt(\tilde{\ga}_6)=\tge_4-\tge_7, \;\; \;\,   \gt(\tilde{\ga}_7)=\tge_7-\tge_8,\hspace{5pt}\gt(\tilde{\ga}_8)=2\tge_8.
\end{align*}
Recall that a root is called 
\begin{align*}
&\text{complex if }\gt(\ga)\neq\pm\ga  \\
&\text{real if }\gt(\ga)=-\ga  \\
&\text{compact imaginary if }\gt(\ga)=\ga \text{ and }\fg^{(\ga)}\subset \fk \\
&\text{noncompact imaginary if }\gt(\ga)=\ga \text{ and }\fg^{(\ga)}\subset \fp 
\end{align*}

For a simple root $\ga$ one may consider the generalized flag variety $\fF_\ga$ of all parabolic subalgebras of $\fg$ conjugate to $\tilde{\fh}+\fg^{(-\ga)}+\sum_{\gb\in\tilde{\gD}^+}\fg^{(\gb)}$.  Let $\pi_\ga:\fB\to\fF_\ga$ be the natural projection (with fiber $\C P_1$).  Section 5 of \cite{VoganIC3}, in particular Lemma 5.1, tells us that when $\fQ\leftrightarrow (\tilde{\fh},\tilde{\gD}^+)$ and 
\begin{equation}\label{eqn:weak}
\begin{split}
  &\ga\text{ is complex and }\gt(\ga)>0  \text{ or }  \\
  &\ga\text{ is noncompact imaginary}
\end{split}
\end{equation}
then $\pi_\ga^{-1}\pi_\ga(Q)$ contains a dense $K$-orbit of dimension $\dim(\fQ)+1$.  Following \cite[\S5]{VoganIC3} we denote this orbit by
\begin{equation}\label{eqn:o}
s_\ga\circ\fQ.
\end{equation}
Therefore, when (\ref{eqn:weak}) holds, we have an `operation' passing from a $K$-orbit $\fQ$ to a $K$-orbit $s_\ga\circ\fQ$ of one higher dimension.

This operation can be expressed in terms of clans as follows.  Suppose that $\fQ\leftrightarrow (\tilde{\fh},\tilde{\gD}^+)\leftrightarrow c$.  Then included in the cases for which (\ref{eqn:weak}) is satisfied are the  simple roots $\tilde{\ga}_j, j=1,2,\dots,n$ so that $(c_j,c_{j+1})$ is
\begin{align*}
\text{(i) } &(\pm,\mp)  \text{ or } \\
\text{(ii) } &(\pm,k) \text{ with the other $k$ occuring to the right.}
\end{align*}
In these cases, $s_{\tilde{\ga}_j}\circ \fQ_c$ is defined and is equal to the $K$-orbit with clan the same as $c$ except the $j$ and $j+1$ places are $(k\,k)$, in case (i) and are switched (as are the $n-j+1$ and $n-j$ places), in case (ii).

Let us consider $c_0=(+\cdots++|--\cdots-)$.  The only allowed operation (\ref{eqn:o}) is 
\begin{equation*}
s_{\tilde{\ga}_n}\circ c_0=(+\dots+1|1-\cdots-).
\end{equation*}
Now $s_{\tilde{\ga}_{n-1}}$ gives the only allowed operation; we get
\begin{equation*}
s_{\tilde{\ga}_{n-1}}\circ(s_{\tilde{\ga}_n}\circ c_0)=(+\dots+1+|-1-\cdots-).
\end{equation*}
Two operations are now allowed:
\begin{align*}
&s_{\tilde{\ga}_{n-2}}\circ s_{\tilde{\ga}_{n-1}}\circ s_{\tilde{\ga}_n}\circ c_0=(+\dots+1++|--1-\cdots-) \\
&s_{\tilde{\ga}_{n}}\circ s_{\tilde{\ga}_{n-1}}\circ s_{\tilde{\ga}_n}\circ c_0=(+\dots+1\hspace{1pt}2|2\hspace{1pt}1-\cdots-).
\end{align*}
Continuing we see that by applying all allowable operations (\ref{eqn:o}) to $c_0$ we generate all clans having the numbers $1,2,\dots,k$ (for any $k=0,1,\dots,n$) occurring left of center and the remaining $n-k$ slots left of center filled with $+$'s.  Note that if $c$ is of this type, then $s_{\tilde{\ga}_j}\circ c$ is defined if either (1) the $j,j+1$ slots are $+,m$, in which case $s_{\tilde{\ga}_j}\circ (\cdots +\hspace{1pt}m\cdots |\cdots)=(\cdots \hspace{1pt}m\hspace{1pt}+ \cdots |\cdots)$ or (2) $j=n$ and $s_{\tilde{\ga}_n}\circ (\cdots +|-
\cdots)=(\cdots\hspace{1pt} m|m\hspace{1pt}\cdots)$.

%%%

\subsection{}\label{subsec:bo} The support of the localization of a highest weight Harish-Chandra module is both a Schubert variety and the closure of a $K$-orbit in $\fB$.  It is shown in \cite[Appendix]{BZ15} that a Schubert variety $Z_w$ is the closure of a $K$-orbit if and only if it is the support of a highest weight Harish-Chandra module (i.e., $-w\rho$ is $\gD_c^+$ dominant, so $w\in\cW$).  Therefore, if
\begin{equation}\label{eqn:cl-1}
\Cl:=\{\text{clans }c: \bar{\fQ_c}\text{ is the support of a highest weight HC module}\},
\end{equation}
there is a bijection $\cW\leftrightarrow\Cl$ satisfying $w\leftrightarrow c$ if and only if $Z_w=\bar{\fQ_c}$.  Proposition \ref{prop:w-c} below gives this bijection explicitly.

As we have seen in \S\ref{subsec:clans}, the clan $c_0=(++\dots+|-\dots --)$ corresponds to the support of the holomorphic discrete series: $\bar{\fQ}_{c_0}=K\cdot \fb$, for our fixed choice of $\fb$.  As mentioned above, the clans obtained by applying all operations (\ref{eqn:o}) starting with $c_0$ have all $+$'s left of center and the pairs of natural numbers occur symmetrically about the center.  Note that there are $2^n$ such clans;  as we will see, these are precisely the clans of $\Cl$.  Since we will only be concerned with these clans we use the following shorthand notation.
\begin{Notation}
We write only the left half of the clan.  So, for example, the clan $(+12++|--21-)$ will be written as $c=(+12++)$. (This is not to be confused with the clan of size $n$ in \cite{Yamamoto97}.)
\end{Notation}
\begin{Prop}\label{prop:w-c}  The bijection $\cW\leftrightarrow\Cl$ for which $w\leftrightarrow c$ means $Z_w=\bar{\fQ}_c$ is given as follows.  The clan corresponding to $w\in\cW$ is obtained by replacing all positive entries of $w$ by $+$'s and all $-a$ (for $a>0$) by $a$.
\end{Prop}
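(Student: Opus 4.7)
My plan is induction on $\ell(w)$ (equivalently, on $\dim\fQ_c$), using the chain of operations $s_{\tilde{\ga}_j}\circ$ from \S\ref{subsec:clans} that, starting from $c_0=(+\cdots+)$, generates all of $\Cl$. The central idea is that these clan operations correspond to right multiplication $w'\mapsto w's_{\ga_j}$ on the Weyl group side, and this correspondence is compatible with the rule in the statement.

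For the base case I would identify $c_0\leftrightarrow w_0:=(n,n-1,\dots,1)$. The rule sends $w_0$ to $c_0$. Under the bijection $\cW\leftrightarrow\Cl$ from \S\ref{subsec:bo}, $c_0$ corresponds to the element of $\cW$ of minimum length, since $\bar\fQ_{c_0}=K\cdot\fb$ is the closed $K$-orbit of dimension $\tfrac{n(n-1)}{2}$. It is straightforward to check that $w_0$ achieves $\ell(w_0)=\tfrac{n(n-1)}{2}$ (being the longest element of the compact Weyl group $W_c$), and no other $w\in\cW$ attains this length: a short length computation shows $\ell(w)=\tfrac{n(n-1)}{2}+\binom{k'}{2}+M+k'$ for $w\in\cW$ with $k'$ negative entries (where $M$ counts ordered pairs consisting of a negative position followed by a positive position), so $k'\ge 1$ forces $\ell(w)>\ell(w_0)$.

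For the inductive step, by \S\ref{subsec:clans} the only operations $c=s_{\tilde{\ga}_j}\circ c'$ occurring within $\Cl$ are of two kinds: (i) $j=n$ with $(c'_n,c'_{n+1})=(+,-)$ becoming a new pair $(k,k)$; and (ii) $j<n$ with $(c'_j,c'_{j+1})=(+,k)$ swapping to $(k,+)$. Assuming by induction that $c'\leftrightarrow w'\in\cW$ via the rule and $\bar\fQ_{c'}=Z_{w'}$, I set $w:=w's_{\ga_j}$ and check: (a) $w\in\cW$---in (i), the $\cW$-structure of $w'$ and $w'_n>0$ force $w'_n=k'+1$ (where $k'$ is the number of negatives in $w'$), so $w_n=-(k'+1)$ correctly extends the negative sequence; in (ii), swapping a positive entry $w'_j$ with a negative entry $w'_{j+1}=-k$ preserves the decreasing-order conditions on both lists; (b) $\ell(w)=\ell(w')+1$, since $w'(\ga_j)>0$ in each case (a short sign-pattern check using $\ga_n=2\ge_n$ and $\ga_j=\ge_j-\ge_{j+1}$); (c) the rule applied to $w$ returns $c$, directly from the entrywise description. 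On the support side, $s_{\tilde{\ga}_j}\circ\fQ_{c'}$ is defined via the projection $\pi_{\ga_j}$, and the Bruhat-theoretic identity $Z_{w's_{\ga_j}}=\pi_{\ga_j}^{-1}\pi_{\ga_j}(Z_{w'})$ (valid when the length increases) then gives $\bar\fQ_c=Z_w$. Since the operations exhaust all $2^n$ elements of $\Cl$ starting from $c_0$ and $|\cW|=2^n$, the inductive construction produces the asserted bijection.

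The main technical obstacle is the case analysis in (a), particularly case (i): one must combine the structure of $\cW$-elements with the bookkeeping of labels in the resulting clan to see that the ``next unused'' label in the canonical labeling precisely matches the value $|w_n|=w'_n$ produced on the Weyl group side. This hinges on the fact that the positive entries of $w'\in\cW$ are exactly $\{k'+1,\dots,n\}$ appearing in decreasing order of position, so that the minimum positive entry is $k'+1$ and always sits at the rightmost positive position.
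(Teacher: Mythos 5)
Your proposal is correct and follows essentially the same strategy as the paper's proof: identify $c_0\leftrightarrow w_0$, then track both sides under the operations $s_{\tilde\ga_j}\circ$ from (\ref{eqn:o}) and verify the entrywise rule is preserved, using that the operations generate all of $\Cl$ (respectively $\cW$). The one place you diverge is in the base case: you justify $c_0\leftrightarrow w_0$ by a length-minimality computation, whereas the paper already knows $\bar\fQ_{c_0}=K\cdot\fb$ is the support of the holomorphic discrete series and that $w_0=(n\dots 21)$ parametrizes that same module (see \S\ref{subsec:hwt-hc} and \S\ref{subsec:bo}), so the identification is immediate; your route is a bit more work but fine.
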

\noindent For example, $w=(5\text{-}1\text{-}243)\leftrightarrow c=(+12++).$

To prove the proposition we first note the $c_0=(++\dots+)\leftrightarrow w_0=(n\dots21)$, then we apply all operations (\ref{eqn:o}) to both Weyl group elements and to clans.

The analogue of the operations (\ref{eqn:o})  for Schubert varieties is as follows.  Let $\ga\in S$, the set of simple roots, determine the generalized flag variety $\fF_\ga$ and the natural quotient map $\pi_\ga:\fB\to\fF_\ga$ as in \S\ref{subsec:clans}.  Then  $w\ga>0$ if and only if $\ell(ws_\ga)=\ell(w)+1$ and one easily checks that this happens precisely when $\pi_\gs^{-1}\pi_\ga(Z_w)=Z_{ws_\ga}$.  Observe that for $w\in \cW$,  a simple root $\ga_j=\ge_j-\ge_{j+1}$, $j=1,2,\dots,n-1$, satisfies $w\ga_j>0$ if and only if $w=(\dots\, a\,\text{-}b\,\dots)$, i.e., $w(j)=a, w(j+1)=-b$, for some $a,b>0$ (and necessarily $a>b$, as $w\in\cW).$  For the simple root $\ga_n=2\ge_n$, $w\ga_n>0$ if and only if $w=(\dots a)$, i.e., $w(n)=a>0$. 

It follows that if $w\leftrightarrow c$ as in the statement of the proposition, then for a given $\ga\in S$ the analogue of the  operation  (\ref{eqn:o}) for $W$ is defined if and only if it is defined for $c$.  Also, when the operation is defined for $\ga_j$ then the results $\tilde{w}=ws_j$ and $\tilde{c}$ correspond as in the statement:
\begin{align*}
   w&=(\dots a\,\text{-}b\dots),\;\tilde{w}=(\dots \text{-}ba\dots)  \\
   c&=(\dots +b\dots), \; \tilde{c}=(\,\dots\, b+\dots), \\
   \intertext{or}
      w&=(\dots a),\;\tilde{w}=(\dots \text{-}a)  \\
   c&
   =(\dots +), \; \tilde{c}=(\dots a).
\end{align*}
Note that  $Z_{\tilde{w}}=\pi_\gs^{-1}\pi_\ga(Z_w)=\pi_\gs^{-1}\pi_\ga(\bar{\fQ}_c)=\bar{\fQ}_{\tilde{c}}$, and $\cW$ is preserved under the operations (\ref{eqn:o}) .   The operations for $W$ applied to $w_0$ give all $2^n$ elements of $\cW$ and all of   $\Cl$ is generated by operations  (\ref{eqn:o}) beginning with $c_0$.    \hfill{$\square$}

\begin{Cor} $\displaystyle{\Cl=\{c:\text{ all entries of $c$ are +'s or natural numbers}\}.}$
\end{Cor}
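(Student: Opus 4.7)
The plan is to deduce the corollary directly from Proposition~\ref{prop:w-c}. The forward inclusion is immediate, since by construction the bijection $w\mapsto c$ of that proposition replaces each positive entry of $w$ by a $+$ and each negative entry $-a$ by the natural number $a$, so every $c\in\Cl$ has shorthand consisting entirely of $+$'s and natural numbers.

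For the reverse inclusion I would construct an inverse explicitly. Given a shorthand $c=(c_1,\dots,c_n)$ with entries in $\{+\}\cup\N$, I would list the positions of the natural-number entries as $i_1<i_2<\cdots<i_k$ and relabel so that $c_{i_j}=j$; this is legitimate because, by the convention in \S\ref{subsec:bo} that the natural-number pairs of a clan in $\Cl$ occur symmetrically about the center, each label occurs at most once in the shorthand. I would then define $w\in W$ by $w_{i_j}:=-j$ for $j=1,\dots,k$ and fill the remaining $n-k$ slots, from left to right, with $n,n-1,\dots,k+1$. The negatives $-1,-2,\dots,-k$ and the positives $\{k+1,\dots,n\}$ would then each appear in decreasing order from left to right, placing $w$ in $\cW$, and applying Proposition~\ref{prop:w-c} to this $w$ would recover the given $c$, whence $c\in\Cl$.

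A more painless alternative is pure cardinality: the paper has already recorded ${}^\#\cW=2^n$, so Proposition~\ref{prop:w-c} gives ${}^\#\Cl=2^n$, while the number of shorthands with entries in $\{+\}\cup\N$ is $\sum_{k=0}^n\binom{n}{k}=2^n$, such a shorthand being determined by the subset of positions holding natural numbers (with labels assigned in order of appearance). Together with the forward inclusion this forces equality. The only point that needs care in either approach is the notational convention that in the shorthand of a clan of $\Cl$ each natural-number label appears exactly once, so that the right-hand count is $2^n$ and not something larger that would include pairings on the same side of the center; this is built into the discussion of the operations~(\ref{eqn:o}) preceding Proposition~\ref{prop:w-c}.
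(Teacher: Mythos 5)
Your proposal is correct and takes essentially the same route as the paper: the corollary is an immediate consequence of Proposition~\ref{prop:w-c}, and you simply supply the verification (via an explicit inverse to the bijection $\cW\to\Cl$, or via a cardinality count using $^\#\cW=2^n$) that the image of $\cW$ under $w\mapsto c$ is exactly the set of shorthands with entries in $\{+\}\cup\N$; the paper leaves this implicit, having already shown in \S\ref{subsec:clans} and in the proof of Proposition~\ref{prop:w-c} that the operations~(\ref{eqn:o}) starting from $c_0$ generate precisely those clans. You are also right to flag the notational point that the right-hand side is to be read through the shorthand convention, which builds in the symmetric pairing of natural numbers about the center.
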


\begin{Cor}\label{cor:closure}
Let $c_1,c\in\Cl$ and $c_1\leftrightarrow\fQ_1, c\leftrightarrow \fQ$.  Then  $\fQ_1\subset\bar{\fQ}$ if and only if there is a sequence of $s_\ga$ operations taking $\fQ_1$ to $\fQ$.
\end{Cor}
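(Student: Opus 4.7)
The plan is to treat the two implications separately, passing between $\Cl$ and $\cW$ via Proposition \ref{prop:w-c}.

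For $(\Leftarrow)$, whenever $s_\ga\circ\fQ'$ is defined, $\fQ'$ lies in $\pi_\ga^{-1}\pi_\ga(\fQ')=\overline{s_\ga\circ\fQ'}$, since $s_\ga\circ\fQ'$ is the dense $K$-orbit of this preimage. Iterating along a chain of operations from $\fQ_1$ to $\fQ$ and using transitivity of closures yields $\fQ_1\subset\bar{\fQ}$.

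For $(\Rightarrow)$, write $c_1\leftrightarrow w_1$ and $c\leftrightarrow w$ under Proposition \ref{prop:w-c}, so $\bar{\fQ}_{c_1}=Z_{w_1}$ and $\bar{\fQ}_c=Z_w$. The hypothesis $\fQ_1\subset\bar{\fQ}$ is then equivalent to $w_1\leq w$ in the Bruhat order on $W$. As established in the proof of Proposition \ref{prop:w-c}, the $s_\ga\circ$ operations on $\Cl$ correspond precisely to length-one right multiplications $v\mapsto vs_j$ by simple reflections that keep $v$ in $\cW$. So the corollary reduces to the combinatorial claim: for $w_1\leq w$ in $W$ with both elements in $\cW$, there exists a chain
\begin{equation*}
   w_1=v_0<v_1<\cdots<v_r=w,\quad v_i\in\cW,\quad v_{i+1}=v_is_{j_i},\quad \ell(v_{i+1})=\ell(v_i)+1.
\end{equation*}

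I would prove this by induction on $\ell(w)-\ell(w_1)$, the base case $w_1=w$ being trivial. The inductive step requires finding a simple reflection $s_j$ such that $w_1s_j\in\cW$, $\ell(w_1s_j)=\ell(w_1)+1$, and $w_1s_j\leq w$; the inductive hypothesis then applies to the pair $(w_1s_j,w)$. The strong exchange property of Bruhat order supplies some simple $s_j$ satisfying the length and Bruhat conditions, so the main obstacle is arranging that $w_1s_j$ also remain inside $\cW$. I would handle this combinatorially on the clan side: locate the leftmost position where $c_1$ and $c$ differ and use the explicit cases (i), (ii) of \S\ref{subsec:clans} to exhibit an allowed operation $s_\ga\circ c_1\in\Cl$ whose result $c_1'$ still satisfies $\bar{\fQ}_{c_1'}\subset\bar{\fQ}_c$. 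Iterating strictly decreases $\dim\bar{\fQ}_c-\dim\bar{\fQ}_{c_1}$, and the induction terminates at $c_1=c$, assembling the required chain of $s_\ga$ operations.
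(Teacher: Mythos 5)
Your reduction to a statement about Weyl group elements in $\cW$ mirrors the paper's approach: Corollary \ref{cor:closure} is proved there simply by citing Corollary \ref{cor:order}, which is exactly the combinatorial claim you arrive at, namely that for $w_1\leq w$ with both in $\cW$ there is a length-additive chain inside $\cW$ obtained by successive right multiplications by simple reflections. The $(\Leftarrow)$ direction and the translation via Proposition \ref{prop:w-c} are both fine.

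The inductive step, however, contains a real gap. You assert that the strong exchange property ``supplies some simple $s_j$ satisfying the length and Bruhat conditions,'' i.e.\ with $\ell(w_1 s_j)=\ell(w_1)+1$ and $w_1 s_j\leq w$. This is false in a general Coxeter group. In $W=S_3$ take $w_1=s_1$ and $w=s_2 s_1$: then $w_1<w$, but $w_1 s_1=e<w_1$ and $w_1 s_2=s_1 s_2\not\leq w$, so no simple reflection works. What the chain property of Bruhat order provides is a length-additive chain built from arbitrary, not necessarily simple, reflections. Thus the difficulty is not merely ``arranging that $w_1 s_j$ remain inside $\cW$'': the existence of a \emph{simple} reflection at each step is itself a special feature of the subset $\cW$ (minimal length coset representatives) and has to be proved. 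That is precisely what the appendix does: Proposition \ref{prop:sb} describes $Z_w$ by flag-dimension inequalities, equation (\ref{eqn:cl-a}) reduces Bruhat order on $\cW$ to the coordinatewise conditions $a_i^y\geq a_i^w$, and the proof of Corollary \ref{cor:order} then exhibits a simple reflection $s_k$ at the leftmost index where the $a$-sequences differ and checks that $y s_k$ stays in $\cW$, increases length by one, and still lies below $w$. Your plan to ``handle this combinatorially on the clan side'' is the right instinct and is essentially the strategy of Appendix \ref{app:1}, but until that argument is actually carried out the proof has a hole at the crucial step.
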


\begin{proof} This is now a restatement of Cor.~\ref{cor:order}.
\end{proof}

In \cite[Appendix]{BZ15} it is shown that an orbital variety $\bar{B\cdot\fn\cap\fn^w}$ is a $K$-orbit in $\fp_+$ if and only if $-w\rho$ is $\gD_c^+$ dominant.  Thus, the moment map images may be described by
\begin{equation*}
\mu(\bar{T_{\fQ}^*\fB})=\bar{B\cdot\fn\cap\fn^w},\text{ when }w\leftrightarrow c\leftrightarrow \fQ.
\end{equation*}
\begin{Prop}
Suppose $c_1,c\in\Cl$ and $\fQ_{c_1}\subset\bar{\fQ_c}.$  Then $\mu(\bar{T_{\fQ_{c_1}}^*\fB})\supseteq \mu(\bar{T_{\fQ_c}^*\fB})$.
\end{Prop}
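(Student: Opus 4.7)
The plan is to reduce the claim to the case of a single $s_\ga$-operation, then to compute along the $\C P_1$-fibration $\pi_\ga:\fB\to\fF_\ga$. By Cor.~\ref{cor:closure}, the hypothesis $\fQ_{c_1}\subset\bar{\fQ_c}$ yields a chain of $s_\ga$-operations carrying $\fQ_{c_1}$ up to $\fQ_c$; by transitivity of inclusion it suffices to handle the case $\fQ_c=s_\ga\circ\fQ_{c_1}$ for a single simple root $\ga$.

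In this situation both orbits project under $\pi_\ga$ to a common $K$-orbit $\fQ_\ga\subset\fF_\ga$, with $\fQ_c$ the open dense orbit of $\pi_\ga^{-1}(\fQ_\ga)$ and $\dim\fQ_{c_1}=\dim\fQ_c-1$.  At any $x\in\fQ_c$ the tangent space $T_x\fQ_c=(d\pi_\ga|_x)^{-1}(T_{\pi_\ga(x)}\fQ_\ga)$ contains the entire vertical direction $\ker(d\pi_\ga)_x$, so
\[
T_{\fQ_c}^{*}\fB\,\big|_{x}=\pi_\ga^{*}\bigl(T_{\fQ_\ga}^{*}\fF_\ga\bigr)\big|_{x}.
\]
At $x\in\fQ_{c_1}$, by contrast, $T_x\fQ_{c_1}$ meets the vertical direction trivially (since $\pi_\ga|_{\fQ_{c_1}}$ has discrete fibers), so $T_{\fQ_{c_1}}^{*}\fB|_x$ properly contains the horizontal piece $\pi_\ga^{*}(T_{\fQ_\ga}^{*}\fF_\ga)|_x$; the extra factor is the one-dimensional vertical cotangent line.

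Now $\mu_\fB$ restricted to $\pi_\ga^{*}(T^{*}\fF_\ga)$ factors through the moment map $\mu_{\fF_\ga}$ of $\fF_\ga$, by the standard functoriality of moment maps under $K$-equivariant submersions. Hence
\[
\mu_\fB\bigl(T_{\fQ_c}^{*}\fB\bigr)=\mu_{\fF_\ga}\bigl(T_{\fQ_\ga}^{*}\fF_\ga\bigr)\subseteq\mu_\fB\bigl(T_{\fQ_{c_1}}^{*}\fB\bigr),
\]
the equality coming from the conormal identification at points of $\fQ_c$, and the inclusion from containment of the horizontal piece inside $T_{\fQ_{c_1}}^{*}\fB$. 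Taking closures and using that $\mu_\fB$ is proper on closed subvarieties of $T^{*}\fB$ (so $\mu_\fB(\bar S)=\overline{\mu_\fB(S)}$), the asserted inclusion $\mu(\bar{T_{\fQ_c}^{*}\fB})\subseteq\mu(\bar{T_{\fQ_{c_1}}^{*}\fB})$ follows.

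The main technical point is the conormal identification along $\pi_\ga$: over the open orbit $\fQ_c\subset\pi_\ga^{-1}(\fQ_\ga)$ the conormal is purely horizontal, while over the codimension-one orbit $\fQ_{c_1}$ it acquires the entire vertical cotangent direction. This asymmetry is precisely what forces $\mu_\fB(T_{\fQ_{c_1}}^{*}\fB)\supseteq\mu_\fB(T_{\fQ_c}^{*}\fB)$ rather than the reverse, and is the only non-formal step in the argument.
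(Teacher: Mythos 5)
Your proof is correct, and it takes a genuinely different route from the paper's. Both arguments begin with the same reduction: by Cor.~\ref{cor:closure} the closure relation among clans in $\Cl$ is witnessed by a chain of single $s_\ga$-operations, so it suffices to treat $\fQ_c=s_\ga\circ\fQ_{c_1}$. From that point the paper switches to the $B$-orbit picture: it invokes the identification $\mu(\bar{T^*_{\fQ_c}\fB})=\bar{B\cdot\fn\cap\fn^w}$ (from \cite[Appendix]{BZ15}) and then proves $\fn\cap\fn^{w_1}\supseteq\fn\cap\fn^{w}$ for $w=w_1s_\ga$ with $w_1\ga>0$ by a two-line root computation. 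You instead stay entirely inside the $K$-orbit/conormal picture, observing that over the open orbit $\fQ_c\subset\pi_\ga^{-1}(\fQ_\ga)$ the conormal is exactly the horizontal pullback $\pi_\ga^*(T^*_{\fQ_\ga}\fF_\ga)$, while over $\fQ_{c_1}$ (which maps to $\fQ_\ga$ with finite fibers, since $\dim\fQ_{c_1}=\dim\fQ_\ga$) the conormal properly contains that horizontal pullback, and then use functoriality of the moment map along the submersion $\pi_\ga$ together with properness of $\mu$. The paper's argument is shorter once the orbital-variety description is in hand; yours avoids that identification and exposes the geometric source of the asymmetry more directly. One small point of precision: the phrase ``the extra factor is the one-dimensional vertical cotangent line'' is not quite intrinsic (there is no canonical horizontal splitting of $T^*_x\fB$); what you actually use, and what is correct, is that the horizontal piece $\pi_\ga^*(T^*_{\fQ_\ga}\fF_\ga)|_x$ has codimension one inside $T^*_{\fQ_{c_1}}\fB|_x$, so the containment is proper. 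It would also be worth stating explicitly that $\pi_\ga(\fQ_{c_1})=\fQ_\ga$ (this follows since $\pi_\ga(\fQ_{c_1})$ is a nonempty $K$-stable subset of the single orbit $\fQ_\ga$), as you need that when taking the union over $x\in\fQ_{c_1}$ to recover all of $\mu_{\fF_\ga}(T^*_{\fQ_\ga}\fF_\ga)$.
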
\label{prop:weak}
\begin{proof} It suffices to show that if $w_1,w\in\cW$ with $w_1\ga>0$ ($\ga\in S$) and $w=w_1s_\ga$, then $\fn\cap\fn^{w_1}\supseteq \fn\cap\fn^w$.  For this note that if $\gb\in\gD(\fn\cap\fn^w)$, then $\gb>0$ and $s_\ga w_1^{-1}\gb=w^{-1}\gb>0$,  so $w_1^{-1}\gb>0$ (since $w_1^{-1}\gb\neq\ga$).  Therefore, $\gb\in \gD(\fn\cap\fn^{w_1})$. 
\end{proof}

This fact will narrow down considerably the possible conormal bundle closures occurring in a characteristic cycle.  The inclusion of moment map images of this proposition is not true for arbitrary clans $c_1,c$, but depends on $c_1,c$ being in  $\Cl$.

Our induction from $(G',K')$ to $(G,K)$ will require that, given $c=(1 c')$ or $(+\,c')$ and $w\leftrightarrow c$ as in Proposition \ref{prop:w-c}, we relate $w'\leftrightarrow c'$ to $w$. Here $c'\in\Cl'$, with $\Cl'$ defined as $\Cl$ is defined, but for the pair $(G',K')$.  We relate $w'$ to $w$ as follows.  First, the Weyl group $W'$ is identified with a subgroup of $W$ in the natural way:
\begin{equation*}
 W'=\{w\in W: w(1)=1\}.
\end{equation*}
Restating Proposition \ref{prop:w-c} for $G'$ we have the following correspondence.  For $c'\in\Cl'$, express $c'$ as $+$'s and natural numbers $2,3,\dots,k$.  Now $w'$ begins with $1$ and the remaining entries are obtained from $c'$ by replacing $2,3,\dots,k$ by $-2,-3,\dots,-k$ and replacing $+$'s by $n,n-1,\dots,k+1$ (left to right).

\begin{Lem}\label{lem:ind}
Suppose $w\leftrightarrow c$ under the bijection $\cW\leftrightarrow \Cl$ of Proposition \ref{prop:w-c} and $w'\leftrightarrow c'$ under the bijection $\cW'\leftrightarrow \Cl'$ described above. \\
\indent (i) If $c=(1\,c)$, then $w=s_{2\ge_1}w'$, where $s_{2\ge_1}$ is the reflection in the root $2\ge_1$.  \\
\indent (ii) If $c=(+\,c')$, then $w=\gs w'$, where $\gs=s_{n-1}\dots s_2s_1$, $s_j$ the simple reflection in $\ga_j$.
\end{Lem}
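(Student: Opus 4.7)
The plan is to read off $w$ and $w'$ entry-by-entry from $c$ and $c'$ using the recipe of Proposition~\ref{prop:w-c} and the convention stated immediately above the lemma, then verify both identities by direct comparison in one-line Weyl group notation. The recipe says: natural-number slots of a clan receive the values $-1,-2,\dots$ in decreasing order left-to-right, and $+$-slots receive $n,n-1,\dots$ likewise; for $\cW'$ the leading entry is forced to be $w'(1)=1$, so the natural-number labels in $c'$ start at $2$.

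For part (i), inserting a natural number $1$ at slot $1$ of $c'$ changes $w'(1)=1$ into $w(1)=-1$ but leaves the entries at slots $j\geq 2$ unchanged: the shift in natural-number count (by $+1$ due to the new leading natural) is exactly cancelled by the shift of label-start (from $-2$ down to $-1$), and the $+$-counts are unaffected. Since $s_{2\ge_1}$ acts as $1\mapsto -1$ in one-line notation while fixing the other slots, this proves $w=s_{2\ge_1}w'$.

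For part (ii), inserting a $+$ at slot $1$ turns $w'(1)=1$ into $w(1)=n$ and shifts the absolute value of every subsequent entry downward by one (with sign preserved): at later $+$-slots one extra $+$ has been consumed on the left, so the available positive value decreases by $1$; at natural-number slots, relabeling from $\{2,\dots,k\}$ down to $\{1,\dots,k-1\}$ likewise decreases absolute value by $1$. This is exactly the effect of $\sigma=s_{n-1}\cdots s_1$: a direct calculation gives $\sigma(\ge_1)=\ge_n$ and $\sigma(\ge_j)=\ge_{j-1}$ for $j\geq 2$, so on signed indices $\sigma$ sends $\pm 1\mapsto \pm n$ and $\pm a\mapsto \pm(a-1)$ for $a\geq 2$. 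Matching this against the explicit description of $w$ just given yields $w=\sigma w'$.

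The main point to get right is the labeling convention for paired natural numbers: labels in clans are arbitrary (clans being equivalence classes), but the paper fixes labels $1,2,\dots,k$ for $\cW$ and $2,3,\dots,k$ for $\cW'$, which is what produces the $\pm 1$ offsets in part (ii). Once this bookkeeping is in place, no deeper argument is needed — both identities follow from a direct slot-by-slot comparison, and part (i) is essentially trivial while part (ii) needs the brief computation of $\sigma$ on $\fh^*$.
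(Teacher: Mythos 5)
Your proof is correct and takes the same approach the paper has in mind: the paper only writes ``The proof of this is immediate'' and gives an example, and what you have done is spell out the slot-by-slot bookkeeping that makes it immediate, including the correct computation that $\gs$ sends $\ge_1\mapsto\ge_n$ and $\ge_j\mapsto\ge_{j-1}$ for $j\geq 2$. One small remark on part (i): the explanation in terms of a ``shift in natural-number count cancelling a shift of label-start'' is more complicated than needed — since $c'$ is already labeled with naturals $2,\dots,k$, the entries of $c$ at positions $j\geq 2$ are \emph{identical} to those of $c'$ at positions $j-1$, so $w(j)=w'(j)$ for $j\geq 2$ with no offset at all, and only the first slot changes from $1$ to $-1$, which is exactly the action of $s_{2\ge_1}$.
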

The proof of this is immediate.

Here is an  example in each case.  (i) If $c=(12+34++)$, then $w=(\text{-}1\text{-}27\text{-}3\text{-}465)$. Then $c'=(2+34++)$ is in the correct form to write down $w'=(1\text{-}27\text{-}3\text{-}465)$.  Since $s_{2\ge_1}=(\text{-}1234567)$, it is clear that $w=s_{2\ge_1}w'$.  (ii) If $c=(+1+23++)$, then $w=(7\text{-}16\text{-}2\text{-}354)$.  Then $c'=(1+23++)$ is rewritten as $c'=(2+34++)$ and $w'=(1\text{-}27\text{-}3\text{-}465)$.  Since $\gs=(7123456)$, we have $w=\gs w'$.

%%%

\subsection{}\label{subsec:tab}
The $\rm{\mathbf {T}}_{\ga\gb}$ `operators' of \cite[Def.~3.4]{Vogan79b} will be used in \S\ref{subsec:CC} and \S\ref{subsec:harder} to help determine characteristic cycles.  Here we give the necessary background.  We also translate the description of the $\rm{\mathbf {T}}_{\ga\gb}$'s into clan notation.  

The $\tau$-invariant of $w$ is 
$$\tau(w):=\{\ga\in S: w\ga<0\}.
$$ 
Following \cite{Vogan79b}, given $\ga,\gb$ consecutive simple roots in the Dynkin diagram, we say that $w$ is in the domain of $\rm{\mathbf {T}}_{\ga\gb}$ when $\ga\notin\tau(w)$ and $\gb\in\tau(w)$.  When this is the case and $\ga$ and $\gb$ have the same length, then 
\begin{align}\label{eqn:tab-w}
  & \rm{\mathbf {T}}_{\ga\gb}(w)=\begin{cases} ws_\ga, & \gb\notin\tau(ws_\ga)  \\
                                ws_\gb, & \ga\in\tau(ws_\gb)
                  \end{cases}
\intertext{(where exactly one of the two possibilities occurs) and if $\ga,\gb$ have different root lengths}\notag
  &\rm{\mathbf {T}}_{\ga\gb}(w) =\left\{\tilde{w}:\tilde{w}\in\{ws_\ga,ws_\gb\},\ga\in\tau(\tilde{w}),\gb\notin\tau(\tilde{w})\right\}               
\end{align}
(a set of either one or two elements).

In Section 5 we will use the following important fact about the $\rm{\mathbf {T}}_{\ga\gb}$ operators and coherent continuation.  If $w$ is in the domain of $\rm{\mathbf {T}}_{\ga\gb}$, then 
\begin{equation}\label{eqn:coherent-c}
s_\ga(L_w)=L_w+L_y+\sum a_uL_u
\end{equation}
when either (i) $\ga,\gb$ are of equal length and $\rm{\mathbf {T}}_{\ga\gb}(w)=y$ or (ii) $\ga,\gb$ are of different length and $\rm{\mathbf {T}}_{\ga\gb}(w)=\{y\}$.  In the unequal length case and $\rm{\mathbf {T}}_{\ga\gb}(w)=\{y',y''\}$, then $L_y$ is replaced by $L_{y'}+L_{y''}$.  This fact is well-known and can be found in \cite{Jantzen79}.

The theory of coherent continuation and the $W$-equivariance of the characteristic cycle map (\cite{KashiwaraTanisaki84}) gives the following statement about the characteristic cycle of an irreducible highest weight module.    Write $CC(L_w)=\sum_y m_{y,w}[\bar{T_{B_y}^*\fB}].$

\begin{Lem}\label{lem:mcgovern}{\upshape (\cite[Lemma 3.1]{Mcgovern00})}  
Suppose $y,w$ are in the domain of some $\rm{\mathbf {T}}_{\ga\gb}$ and suppose $m_{y,w}\neq 0$.  Then
\newline\indent (i) if $\ga,\gb$ have the same lengths, then $m_{_{\rm{\mathbf {T}}_{\ga\gb}(y),\rm{\mathbf {T}}_{\ga\gb}(w)}}\neq 0$, and 
\newline\indent (ii) if $\ga,\gb$ have different lengths, then for $y'\in \rm{\mathbf {T}}_{\ga\gb}(y)$ there is $w'\in \rm{\mathbf {T}}_{\ga\gb}(w)$ so that $m_{y',w'}\neq 0$.
\end{Lem}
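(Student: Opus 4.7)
The approach is to combine the $W$-equivariance of the characteristic cycle map \cite{KashiwaraTanisaki84} with the coherent continuation identity (\ref{eqn:coherent-c}). The plan is to apply the coherent continuation operator $s_\ga$ to $CC(L_w)$, expand both sides, and compare the coefficient of the conormal class indexed by $y' \in \mathbf{T}_{\ga\gb}(y)$ (and, correspondingly, $w' \in \mathbf{T}_{\ga\gb}(w)$).

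Writing $CC(L_w) = \sum_z m_{z,w}[\bar{T_{B_z}^*\fB}]$, $W$-equivariance gives $s_\ga \cdot CC(L_w) = CC(s_\ga L_w)$, which by (\ref{eqn:coherent-c}) equals
\begin{equation*}
CC(L_w) + CC(L_{w'}) + \sum_u a_u\, CC(L_u),
\end{equation*}
where in the unequal-length case $L_{w'}$ is replaced by $L_{w_1'} + L_{w_2'}$, and each $L_u$ has support strictly smaller than $\bar{B_w}$. Reading off the coefficient of $[\bar{T_{B_{y'}}^*\fB}]$ on this side yields a nonnegative sum containing $m_{y',w}$, $m_{y',w'}$, and $\sum a_u m_{y',u}$.

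On the other side, I would invoke the Kashiwara--Tanisaki description of $s_\ga$ on the conormal basis: whenever $\ga \notin \tau(z)$, the expansion of $s_\ga[\bar{T_{B_z}^*\fB}]$ acquires a positive contribution at $[\bar{T_{B_{z'}}^*\fB}]$ for each $z' \in \mathbf{T}_{\ga\gb}(z)$. Since $\ga \notin \tau(y)$ by hypothesis, the summand $m_{y,w}[\bar{T_{B_y}^*\fB}]$ contributes a strictly positive multiple of $m_{y,w}$ to the $[\bar{T_{B_{y'}}^*\fB}]$-coefficient, so the left-hand side at $[\bar{T_{B_{y'}}^*\fB}]$ is strictly positive. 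The lower-support contributions $a_u m_{y',u}$ on the right are controlled by a Bruhat/dimension argument (any $L_u$ with $\dim\bar{B_u} < \dim\bar{B_w}$ supports $[\bar{T_{B_{y'}}^*\fB}]$ only when $B_{y'} \subset \bar{B_u}$, which fails for $y'$ of length comparable to $w$). The $\mathbf{T}_{\ga\gb}$-pairing produced by the Kashiwara--Tanisaki formula then identifies the positive contribution on the left with the $m_{y',w'}$ term on the right, yielding (i). Part (ii) follows by the same mechanism: each $y' \in \mathbf{T}_{\ga\gb}(y)$ is matched with the specific $w' \in \mathbf{T}_{\ga\gb}(w)$ to which it is coupled through (\ref{eqn:coherent-c}).

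The main obstacle will be controlling the precise Kashiwara--Tanisaki $W$-action on the conormal basis and excluding spurious contributions to the $[\bar{T_{B_{y'}}^*\fB}]$-coefficient: in particular, one must verify that neither $m_{y',w}$ nor the lower-support $m_{y',u}$ absorb the positive contribution coming from $m_{y,w}$. In the unequal-length case the additional bookkeeping is to match the two images in $\mathbf{T}_{\ga\gb}(y)$ with those in $\mathbf{T}_{\ga\gb}(w)$, a pairing dictated by the short exact sequences underlying (\ref{eqn:coherent-c}), and to ensure the particular pair $(y',w')$ occurring together is well-defined at the level of coefficients.
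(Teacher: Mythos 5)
The paper does not prove this lemma; it is stated with a citation to \cite[Lemma~3.1]{Mcgovern00} and no proof is given. Your strategy (apply $s_\ga$, invoke $W$-equivariance of $CC$ and the coherent continuation identity (\ref{eqn:coherent-c}), and compare coefficients of $[\bar{T_{B_{y'}}^*\fB}]$) is in fact McGovern's argument, so in broad outline you are reconstructing the cited proof. However, two of the steps you sketch do not hold up as stated.

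First, your mechanism for removing the extraneous terms $m_{y',w}$ and $\sum a_u m_{y',u}$ is not the right one. A Bruhat/dimension argument fails: there is no reason that $y'$, whose length is $\ell(y)\pm 1$, is of ``length comparable to $w$,'' and nothing prevents $B_{y'}\subset\bar{B_u}$ for some $u<w$. The correct (and much cleaner) argument is the $\tau$-invariant criterion recorded in \S\ref{subsec:1-cc}: if $[\bar{T_{B_{y'}}^*\fB}]$ occurs in $CC(L_u)$, then $\tau(y')\supseteq\tau(u)$. By construction $\gb\in\tau(w)$ and $\ga,\gb\in\tau(u)$ for all correction terms $u$, while $\gb\notin\tau(T_{\ga\gb}(y))$; hence $m_{y',w}=0$ and $m_{y',u}=0$. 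The same observation also shows that the potentially negative contributions $-m_{z,w}[\bar{T_{B_z}^*\fB}]$ (from $z$ with $\ga\in\tau(z)$, for which $s_\ga[\bar{T_{B_z}^*\fB}]=-[\bar{T_{B_z}^*\fB}]$) cannot hit the $[\bar{T_{B_{y'}}^*\fB}]$-coefficient, because any such $z$ with $m_{z,w}\neq 0$ has $\gb\in\tau(z)$ so $z\neq y'$. Your sketch glosses over these negative terms entirely, so as written the claim that ``the left-hand side at $[\bar{T_{B_{y'}}^*\fB}]$ is strictly positive'' is not justified.

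Second, and more seriously, the crux of your argument is the assertion that for $\ga\notin\tau(z)$, the expansion of $s_\ga[\bar{T_{B_z}^*\fB}]$ ``acquires a positive contribution at $[\bar{T_{B_{z'}}^*\fB}]$ for each $z'\in\rm{\mathbf{T}}_{\ga\gb}(z)$.'' This is not a statement of Kashiwara--Tanisaki. What \cite{KashiwaraTanisaki84} gives is $W$-equivariance, and what \cite[\S3]{Tanisaki85} gives is \emph{nonnegativity} of all coefficients in $s_\ga[\bar{T_{B_z}^*\fB}]-[\bar{T_{B_z}^*\fB}]$ when $\ga\notin\tau(z)$. The strict positivity of the specific coefficient at $\rm{\mathbf{T}}_{\ga\gb}(z)$ (which may be $zs_\gb$ rather than $zs_\ga$) is a genuinely nontrivial refinement and is exactly the content that needs to be proved. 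Without it the inequality
\begin{equation*}
m_{y',w'}=\sum_{z:\,\ga\notin\tau(z)} m_{z,w}\,a^{\,z}_{y'} \;\geq\; m_{y,w}\,a^{\,y}_{y'}
\end{equation*}
tells you nothing. So while the architecture of your argument is sound, the proof has a real gap here, and it is the gap that McGovern's paper actually closes.
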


We may conclude the following.
\begin{Lem}\label{lem:mcgovern+}
If $y,w$ are in the domain of some $\rm{\mathbf {T}}_{\ga\gb}$ and  $\ga,\gb$ have the same lengths, then $m_{y,w}=m_{_{\rm{\mathbf {T}}_{\ga\gb}(y),\rm{\mathbf {T}}_{\ga\gb}(w)}}$.
\end{Lem}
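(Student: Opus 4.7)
The plan is to combine Lemma~\ref{lem:mcgovern}(i) with the observation that in the equal-length case $\rm{\mathbf{T}}_{\ga\gb}$ is an involution, and then to upgrade McGovern's non-vanishing argument to a one-sided inequality that can be iterated symmetrically to produce an equality.

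First, I would verify that for $\ga,\gb$ of equal length, $\rm{\mathbf{T}}_{\ga\gb}$ restricted to its domain is a bijection onto the domain of $\rm{\mathbf{T}}_{\gb\ga}$, with $\rm{\mathbf{T}}_{\gb\ga}$ as two-sided inverse.  If $y,w$ lie in the domain of $\rm{\mathbf{T}}_{\ga\gb}$ (so $\ga\notin\tau$ and $\gb\in\tau$ for both), then setting $y^{*}:=\rm{\mathbf{T}}_{\ga\gb}(y)$ and $w^{*}:=\rm{\mathbf{T}}_{\ga\gb}(w)$, the formula (\ref{eqn:tab-w}) forces $\ga\in\tau(y^{*}),\,\tau(w^{*})$ and $\gb\notin\tau(y^{*}),\,\tau(w^{*})$, placing them in the domain of $\rm{\mathbf{T}}_{\gb\ga}$.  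A case check against the two sub-cases of (\ref{eqn:tab-w}) then yields $\rm{\mathbf{T}}_{\gb\ga}(y^{*})=y$ and $\rm{\mathbf{T}}_{\gb\ga}(w^{*})=w$.

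Next, I would refine Lemma~\ref{lem:mcgovern}(i) to a one-sided inequality $m_{y^{*},w^{*}}\geq m_{y,w}$.  Its proof proceeds by applying $CC$ to the coherent continuation identity (\ref{eqn:coherent-c}), giving
\[
s_\ga\cdot CC(L_w)\;=\;CC(L_w)+CC(L_{w^{*}})+\sum_u a_u\,CC(L_u)
\]
with $a_u\in\Z_{\geq 0}$, and then extracting the $[\bar{T_{B_{y^{*}}}^{*}\fB}]$-coefficient from both sides via the $W$-equivariance of $CC$ established in \cite{KashiwaraTanisaki84}.  In the equal-length case, the action of $s_\ga$ on the basis class $[\bar{T_{B_y}^{*}\fB}]$ produces $[\bar{T_{B_{y^{*}}}^{*}\fB}]$ with coefficient $+1$ together with a nonnegative combination of other basis classes; the same nonnegativity applies to the remaining terms in the identity.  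The coefficient comparison then gives the one-sided inequality.

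Finally, applying the same inequality to the pair $(y^{*},w^{*})$ with the inverse operator $\rm{\mathbf{T}}_{\gb\ga}$, whose image is $(y,w)$ by the first step, yields the reverse inequality $m_{y,w}\geq m_{y^{*},w^{*}}$, and the equality follows.  The main technical obstacle will be pinning down the precise sign and nonnegativity structure of the $s_\ga$-action on conormal bundle classes from \cite{KashiwaraTanisaki84}, so that the one-sided inequality emerges cleanly from the coefficient extraction without cancellations between the terms on the right-hand side masking the $m_{y,w}$-contribution coming from $[\bar{T_{B_y}^{*}\fB}]$ on the left.
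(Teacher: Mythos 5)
Your proof takes essentially the same two-step route as the paper: extract a one-sided inequality from the proof of Lemma~\ref{lem:mcgovern} (coherent continuation together with the $W$-equivariance of $CC$), then observe that $\rm{\mathbf{T}}_{\ga\gb}$ and $\rm{\mathbf{T}}_{\gb\ga}$ are mutually inverse on the relevant domains and apply the inequality in the other direction. The only cosmetic discrepancy is the direction of the initial inequality — you write $m_{\rm{\mathbf{T}}_{\ga\gb}(y),\rm{\mathbf{T}}_{\ga\gb}(w)}\geq m_{y,w}$ while the paper states $m_{y,w}\geq m_{\rm{\mathbf{T}}_{\ga\gb}(y),\rm{\mathbf{T}}_{\ga\gb}(w)}$ — but the symmetry you then invoke makes either direction sufficient, and your technical caveat about the nonnegativity structure of the $s_\ga$-action is apt since the paper simply defers this step to McGovern's proof.
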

\begin{proof} The proof of McGovern's lemma gives $m_{y,w}\geq m_{_{\rm{\mathbf {T}}_{\ga\gb}(y),\rm{\mathbf {T}}_{\ga\gb}(w)}}$.  Since $\rm{\mathbf {T}}_{\ga\gb}(y)$ and $\rm{\mathbf {T}}_{\ga\gb}(w)$ are in the domain of $\rm{\mathbf {T}}_{\gb\ga}$ and $\rm{\mathbf {T}}_{\gb\ga}\rm{\mathbf {T}}_{\ga\gb}(y)=y,\rm{\mathbf {T}}_{\gb\ga}\rm{\mathbf {T}}_{\ga\gb}(w)=w$, we conclude that the other inequality also holds.
\end{proof}

Using the above definition of $\rm{\mathbf {T}}_{\ga\gb}$ along with Proposition \ref{prop:w-c} we may translate the above formula for  the $\rm{\mathbf {T}}_{\ga\gb}$'s (\ref{eqn:tab-w}) into clan notation.

\begin{Notation}
For $\ga,\gb$ two consecutive roots $\ga_i,\ga_{k}$, we write $\rm{\mathbf {T}}_{\ga\gb}$ as $\rm{\mathbf {T}}_{j\,k}$ ($k=j\pm 1$).  Suppose $w\in\cW$ and $w\leftrightarrow c$ (as in Proposition \ref{prop:w-c}), then we write $\rm{\mathbf {T}}_{jk}(c)$ for the clan (or pair of clans) corresponding to the Weyl group element, or pair, $\rm{\mathbf {T}}_{\ga\gb}(w).$  We also write $\tau(c)$ for $\tau(w)$.
\end{Notation}
 
We have $\ga_j=\ge_j-\ge_{j+1}\in\tau(c)$ if and only if the $j$ and $j+1$ entries of the clan $c$ are $++$, $k+$ or $k\,k\!+\!1$, and $\ga_n=2\ge_n\in\tau(c)$ when the last entry of $c$ is a natural number.
 
Using the abbreviated notation of $\rm{\mathbf {T}}_{j\,j+1}$ for $\rm{\mathbf {T}}_{\ga_j\ga_{j+1}}$ and writing the $j,j+1,j+2$ entries of the clans, in the case of equal root lengths we have:
\begin{align}\begin{split}\label{eqn:tabc}
&\rm{\mathbf {T}}_{j\,j+1}( \,\cdots +\;\,k\,\;+ \,\cdots)=(\cdots \,\;+\,\;+\,\,\,k \,\cdots)  \\
&\rm{\mathbf {T}}_{j\,j+1}( \,\cdots +\,k{\text{-}}1\,\,k \,\cdots)=(\cdots \,k{\text{-}}1\,+\,\,k \,\cdots)  \\
&\rm{\mathbf {T}}_{j+1\,j}( \,\cdots +\;\,+\;\;\,k \,\cdots)=(\cdots \;\,+\,\,\;k\,\,+ \,\,\cdots)  \\
&\rm{\mathbf {T}}_{j+1\,j}( \,\cdots k{\text{-}}1\,+\,k \,\cdots)=(\cdots \;\,+\;k{\text{-}}1\,\,k \,\,\cdots). \end{split} \\
\intertext{For the last two simple roots} 
\begin{split}\label{eqn:tabcc}&\rm{\mathbf {T}}_{n\; n-1}(\dots +\,+)=(\dots + \, (k\!{\text{\footnotesize+}}\!1))  \\
&\rm{\mathbf {T}}_{n\; n-1}(\dots \,\,k \;\,+)=(\dots + \,\,\,k\,)  \\
&\rm{\mathbf {T}}_{n-1\; n}(\dots + \,k)=\left\{(\dots \,k \,+), (\dots + +)\right\}.
\end{split}\end{align}
As an example, consider the clan $c=(++\dots ++)$.  Then $\ga_n\notin\tau(c)$, $\ga_{n-1}\in\tau(c)$, and $\rm{\mathbf {T}}_{n\;n-1}(++\dots ++)=(++\dots +1)$.  Now, formula (\ref{eqn:coherent-c}) gives
\begin{equation*}\label{eqn:coherent-c-c}
s_n(X_{(++\dots++)})=X_{(++\dots++)}+X_{(++\dots+1)}+\sum X_d,
\end{equation*}
where $\ga_{n-1},\ga_n\in\tau(d)$.

%%%%%%%%%%%%%%%%%%%%%%%%%%%%%%%%%%%%%%%%%%%%%%%%%%

\section{Harish-Chandra cells}\label{sec:hc-cells}  The Harish-Chandra cells of highest weight Harish-Chandra modules are determined in this section.  They are given by an inductive procedure in terms of clans.  The statement is contained in Proposition \ref{prop:hc-cells}.  We will continue our notion of $(G,K)$ for $(Sp(2n),GL(n))$, $\Cal O_k$ the $K$-orbit in $\fp_+$ of (\ref{eqn:Korbit}), $\fb=\fh+\fn$ for the Borel subalgebra of $\fg$ as in \S\ref{subsec:sp}, etc.  We use the notation $(G',K')$ for $(Sp(2(n-1)),GL(n-1))$, $\Cal O_k'$ for a $K'$-orbit in $\fp_+'=\fp_+\cap \fg'$, etc.

\vspace{4pt}
\begin{Notation}
We use the shorthand notation $T_c$ for $\bar{T_{\fQ_c}^*\fB}$, when $c$ is the clan parametrizing the $K$-orbit $\fQ_c$.
\end{Notation}

%%%%%%

\subsection{} We begin by describing the moment map images of the closures of the conormal bundles to $K$-orbits  in $\fB$.  Each $c\in \Cl$ is of the form $(1\,c')$ or $(+\hspace{1pt}c')$ with $c'\in\Cl'$, the set of clans for $(G',K')$ as in (\ref{eqn:cl-1}).  The following lemma gives $\mu(T_c)$ in terms of $\mu(T_{c'})$.  This lemma follows from \cite[\S3.5]{Yamamoto97}.
\begin{Lem} Let $c'\in\Cl'$ and suppose that $\mu(T_{c'})=\bar{\Cal O_k'}$.  Then 
\begin{align*}
   &\text{\upshape{(i)}}\,\,\,  \mu(T_{(1\,c')})=\bar{\Cal O}_k  \\
   &\text{\upshape{(ii)}} \,\,\mu(T_{(+\hspace{1pt}c')})=
     \begin{cases} \bar{\Cal O}_{k+2}, &  k\leq n-2  \\
                   \bar{\Cal O}_n, &  k=n-1.
     \end{cases}
\end{align*}     
\end{Lem}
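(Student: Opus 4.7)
The plan is to compute the moment-map image directly via the identification $\mu(\bar{T_{\fQ_c}^*\fB}) = \overline{B\cdot(\fn\cap\fn^w)}$ recalled in \S\ref{subsec:bo}, together with the formulas $w=s_{2\ge_1}w'$ and $w=\gs w'$ from Lemma \ref{lem:ind}. Since the $K$-orbits $\Cal O_k$ on $\fp_+$ are classified by the rank of the representing symmetric matrix (see (\ref{eqn:Korbit})), it suffices to compute the generic rank of an element in $\fn\cap\fn^w\cap\fp_+$. Explicitly, this intersection is the span of the root vectors $X_{\ge_i+\ge_j}$ for which $w^{-1}(\ge_i+\ge_j)>0$.

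For case (i), using $w=s_{2\ge_1}w'$ and that $w'$ fixes $\ge_1$, one checks $w^{-1}(2\ge_1)=-2\ge_1<0$ and that $w^{-1}(\ge_1+\ge_j)$ has a negative $\ge_1$-coefficient for each $j\ge 2$; so no root involving $\ge_1$ contributes, and for $i,j\ge 2$ one has $w^{-1}(\ge_i+\ge_j)=(w')^{-1}(\ge_i+\ge_j)$. Under the natural embedding of $\fp_+'$ into $\fp_+$, the subspace $\fn\cap\fn^w\cap\fp_+$ then coincides with $\fn'\cap(\fn')^{w'}\cap\fp_+'$, so the generic rank is unchanged and $\mu(\bar{T_{\fQ_c}^*\fB})=\bar{\Cal O}_k$. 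For case (ii), with $w=\gs w'$ and $\gs^{-1}\ge_1=\ge_n$, the analogous calculation shows that $\fn\cap\fn^w\cap\fp_+$ acquires additional root vectors of the form $X_{\ge_i+\ge_n}$ that were absent from the analogue for $w'$. Because the antidiagonal-symmetric structure of $\sym(n)$ couples this new column with its mirror row, a generic rank-$k$ matrix arising from the $\fp_+'$ part becomes a generic rank-$(k+2)$ matrix in $\fp_+$ when $k\le n-2$; for $k=n-1$ the rank saturates at $n$.

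The principal obstacle is the rank-jump bookkeeping in case (ii): one must verify that the newly permitted root directions are linearly independent of the $\fp_+'$ contribution and combine to raise the rank by exactly $2$ rather than $1$. A clean way to handle this is to exhibit an explicit base point in $\fn\cap\fn^w\cap\fp_+$ whose symmetric-matrix rank realizes the claimed value, and then use $K$-invariance of rank together with the fact that $\mu(\bar{T_{\fQ_c}^*\fB})$ is an irreducible $K$-stable subvariety of $\fp_+$. Alternatively, one can simply appeal to \cite[\S3.5]{Yamamoto97}, where precisely this branching of $K$-orbits on $\fp_+$ is worked out in the signed-involution (equivalently clan) language; translating Yamamoto's result across the bijection of Proposition \ref{prop:w-c} then gives the lemma directly.
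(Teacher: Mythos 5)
Your proposal follows exactly the paper's method---translate $c\leftrightarrow w$ via Lemma \ref{lem:ind}, determine $\gD(\fn\cap\fn^w)$ from $\gD(\fn'\cap(\fn')^{w'})$, and read off the generic rank in $\sym(n)$---and the paper completes the rank bookkeeping you defer in case (ii) by an explicit matrix argument showing the extra row/column raises a rank-$k$ block to rank $\min\{k+2,n\}$. One small slip: you should have $\gs(\ge_1)=\ge_n$ rather than $\gs^{-1}(\ge_1)=\ge_n$; the new roots of $\gD(\fn\cap\fn^w)$ are $\gs(\ge_1+\ge_j)=\ge_n+\ge_{j-1}$, which is consistent with your (correct) claim that the extra root vectors are the $X_{\ge_i+\ge_n}$.
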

\begin{proof} 
Recall that if $c\leftrightarrow w$ as in Proposition \ref{prop:w-c}, then $\mu(T_c)=\bar{B\cdot\fn\cap\fn^w}\subset\fp_+\simeq\sym(n)$.  This is $\bar{\Cal O_l}$ exactly when $\fn\cap\fn^w$ contains a matrix of rank $l$, but no matrix of higher rank.  Suppose $c'\leftrightarrow w'\in W'=W(C_{n-1})$.

Consider $c=(1\,c')$.  Then $c\leftrightarrow w=s_{2\ge_1}w'$, by Lemma \ref{lem:ind}.  It follows that $\gD(\fn\cap\fn^w)=\gD(\fn'\cap\fn'^{\,w'})$.  Check: $\gD(\fn)\smallsetminus\gD(\fn')=\{\gb\in\gD^+:\IP{\gb}{\ge_1}>0\}$ and $s_{2\ge_1}\gb=\gb$, for $\gb\in\gD(\fn')$.  Now $\gb\in\gD(\fn\cap\fn'^{\,w'}) \iff \gb\in\gD(\fn')$ and $w'^{-1}(\gb)>0 \iff \gb\in\gD(\fn)$ and $w'^{-1}s_{2\ge_1}\gb>0 \iff \gb\in\gD(\fn\cap\fn^w).$  Therefore, (i) holds.

Now consider $c=(+\,c')$.  Then, by Lemma \ref{lem:ind}, $c\leftrightarrow w=\gs w'$, $\gs=s_{n-1}\dots s_2s_1$, where $s_i$ is the simple reflection for the $i^\text{th}$ simple root.  Then 
\begin{equation}\label{eqn:ii}
\gs^{-1}\gD(\fn\cap\fn^w)=\{\ge_1+\ge_j:j=1,2,\dots,n\}\cup\gD(\fn'\cap\fn'^{\,w'}).
\end{equation}
We check this.  Since $\gs\in W_K$, $\gs(\gD(\fp_+))=\gD(\fp_+).$  Therefore, $\ga\in\gs^{-1}\gD(\fn\cap\fn^w)$ $\iff$ $\gs(\ga)>0$ and $w^{-1}\gs(\ga)>0$ $\iff$ $\ga>0$ and $w'^{-1}\ga>0$.  It follows , since $w'$ permutes $2,3,\dots,n$, that any $\ga$ in the right-hand side of (\ref{eqn:ii}) is in $\gs^{-1}\gD(\fn\cap\fn^w)$.  On the other hand, suppose $\ga\in\gs^{-1}\gD(\fn\cap\fn^w)$. Then either $\ga\in\gD(\fn)\!\smallsetminus\!\gD(\fn')$ (so $\ga=\ge_1+\ge_j$) or $\ga\in\gD(\fn')$ (so $\ga\in\gD(\fn'\cap\fn'^{w'})$, since $w'^{-1}(\ga)\in\fn\cap\fg'=\fn'$).  Therefore $\ga$ is in the right-hand side.

Since $\mu(T_{(+\hspace{1pt}c')})=\bar{B\cdot\fn\cap\fn^w}$, we need to determine the maximum rank of a matrix in $\fn\cap\fn^w\subset\sym(n)$.  By (\ref{eqn:ii}) $\fn\cap\fn^w$ is the set of matrices
$$X=\begin{bmatrix} x_1 & \cdot&\cdot & x_n  \\
                        &  & & \cdot     \\
                        &  {X'} & & \cdot      \\
                        &  & & x_1
    \end{bmatrix},\;\; x_1,\dots,x_n\text{ arbitrary  and }\rank(X')\leq k.
$$
The rank of $X$ is at most $k+2$.  When $\rank(X')=k$ and $k\leq n-2$, then $X=X'+X_{\ge_j}+X_{\ge_n}$ will have rank $k+2$ for some $j=1,\dots,n-1$ (by choosing $j$ so that the $j$th column of $X'$ is a linear combination of the other columns). When $\rank(X')=k=n-1$ (so no such $j$ exists), $X=X'+X_{2\ge_n}$ has rank $n$.
\end{proof}

In analogy with geometric cells for Weyl groups, we set 
$$\Cg(k):=\{c\in\Cl: \mu(T_c)=\bar{\Cal O}_k\}.$$
By Appendix \ref{app:B}, $\Cg(k)$ is the set of \emph{all} clans for $(G,K)$ with $\mu(T_c)=\bar{\Cal O}_k.$  Restating the lemma slightly we have
\begin{equation}\label{eqn:g-cells}
\begin{split}
&\Cg(n)=\{(+\hspace{1pt}c') : c'\in\Cg'(n-1)\cup\Cg'(n-2) \}\text{ and }  \\
&\Cg(k)=\{(+\hspace{1pt}c') : c'\in\Cg'(k-2)\}\cup \{(1\,c') : c'\in\Cg'(k)\}, 2\leq k\leq n-1, \\
&\Cg(k)=\{(1\,c') : c'\in\Cg'(k)\}, k=0,1.
\end{split}
\end{equation}
A simple counting argument gives the size of each $\Cg(k)$.
\begin{Prop}\label{prop:34} $\displaystyle{^\#\!\Cg(k)={n\choose [\frac{k}{2}]}}.$
\end{Prop}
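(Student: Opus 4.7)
The plan is a direct induction on $n$ using the recursion (\ref{eqn:g-cells}) together with Pascal's identity $\binom{n}{m}=\binom{n-1}{m-1}+\binom{n-1}{m}$. The geometric content has already been absorbed into the lemma above and its consequence (\ref{eqn:g-cells}); what remains is a combinatorial identity.

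For the base case $n=1$, the set $\Cl$ consists of the two clans $(+)$ and $(1)$, which under Proposition \ref{prop:w-c} correspond to $w=(1)$ and $w=(\text{-}1)$. Since the only positive root is $2\ge_1$, a direct check of $\fn\cap\fn^w$ gives $\mu(T_{(+)})=\bar{\Cal O}_1$ and $\mu(T_{(1)})=\{0\}=\bar{\Cal O}_0$, so $^{\#}\!\Cg(0)=\,^{\#}\!\Cg(1)=1=\binom{1}{0}$, as required.

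For the inductive step I assume $^{\#}\!\Cg'(j)=\binom{n-1}{\lfloor j/2\rfloor}$ for all $j$, and split into the three cases of (\ref{eqn:g-cells}). For $k=0,1$ only the $(1\,c')$ family contributes, giving $^{\#}\!\Cg(k)=\binom{n-1}{0}=\binom{n}{0}$. For $2\le k\le n-1$, since $\lfloor (k-2)/2\rfloor=\lfloor k/2\rfloor-1$, Pascal's identity gives
\[
^{\#}\!\Cg(k)=\binom{n-1}{\lfloor k/2\rfloor-1}+\binom{n-1}{\lfloor k/2\rfloor}=\binom{n}{\lfloor k/2\rfloor}.
\]

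The only case requiring a little extra care is $k=n$, where both summands in (\ref{eqn:g-cells}) come from the $(+\,c')$ family. I would split by the parity of $n$: when $n$ is odd, $\lfloor (n-1)/2\rfloor=(n-1)/2$ and $\lfloor (n-2)/2\rfloor=(n-3)/2$, so Pascal's identity directly produces $\binom{n}{(n-1)/2}$; when $n$ is even, both floors equal $(n-2)/2$, and one invokes the symmetry $\binom{n-1}{(n-2)/2}=\binom{n-1}{n/2}$ before applying Pascal's identity to obtain $\binom{n}{n/2}$. Either way the result is $\binom{n}{\lfloor n/2\rfloor}$, completing the induction. There is no genuine obstacle in the argument; the only mild nuisance is keeping track of floor functions in the $k=n$ parity split.
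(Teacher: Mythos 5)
Your proof is correct and follows essentially the same route as the paper's: induction on $n$ via the recursion (\ref{eqn:g-cells}) plus Pascal's identity, with the only added detail being that you explicitly handle the parity split in the $k=n$ case (which the paper leaves implicit but is needed to see that $\binom{n-1}{[\frac{n-1}{2}]}+\binom{n-1}{[\frac{n-2}{2}]}=\binom{n}{[\frac{n}{2}]}$).
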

\begin{proof} We use induction on $n$ .  For $n=1$ there are just two clans: $\mu(T_{(+)})=\bar{\Cal O}_1$ and $\mu(T_{(1)})=\bar{\Cal O}_0$, and the statement holds.  For $n>1$,
\begin{align*}
 &^\#\!\Cg(k)=^\#\!\!\Cg'(k)+^\#\!\!\Cg'(k-2)={n-1\choose [\frac{k}{2}]}+{n-1\choose [\frac{k}{2}]-1}={n\choose [\frac{k}{2}]}, 2\leq k\leq n-1, \\ 
 &^\#\!\Cg(n)=^\#\!\!\Cg'(n-1)+^\#\!\!\Cg'(n-2)={n-1\choose [\frac{n-1}{2}]}+{n-1\choose [\frac{n-2}{2}]}={n\choose [\frac{n}{2}]},   \\
 &^\#\!\Cg(k)=^\#\!\!\Cg'(k)=1, k=0,1.
\end{align*} 
\end{proof}

%%%%%%

\subsection{} For arbitrary irreducible Harish-Chandra modules, if $AV(X)=\bar{\Cal O}^1\cup\dots\cup\bar{\Cal O}^m$, then the $G$-saturations of the $\Cal O^i$ coincide and the closure of this complex orbit $\Cal O^\C$ is the associated variety of the annihilator of $X$.  See \cite{Vogan91}.  A complex orbit closure $\bar{\Cal O^\C}$ is the associated variety of the annihilator of some irreducible Harish-Chandra module of infinitesimal character $\rho$ if and only if $\Cal O^\C$ is `special' in the sense of \cite{Lusztig79}.  This fact follows from the results of \cite{BarbaschVogan82}. The special orbits are known and are listed, for example, in \cite{CollingwoodMcgovern93}

The orbits of interest to us are the $\Cal O_k,k=0,1,\dots,n$.  An orbit $\Cal O_k^\C$ is special if and only if $k$ is even or $k=n$.

To each complex nilpotent orbit $\Cal O^\C$ is associated an irreducible $W$-representation $\pi(\Cal O^\C)$ (\cite{Springer78}).  There is a procedure for the classical groups  (described in \cite[\S10.1]{CollingwoodMcgovern93}) of associating to the partition parametrizing a nilpotent orbit, a pair of partitions parametrizing the Weyl group representation $\pi(\Cal O^\C)$ in terms of Lusztig's symbols (\cite{Lusztig79}).  For the orbits $\Cal O_k^\C$  one sees that 
\begin{equation}\label{eqn:sp-dim}
\dim(\pi(\Cal O_k))={n\choose [\frac{k}{2}]}, k=0,1,\dots,n.
\end{equation}

Harish-Chandra cells are defined in \cite{BarbaschVogan83}; they partition the irreducible Harish-Chandra modules of infinitesimal character $\rho$.  It follows easily from the definitions that any two irreducible representations in the same Harish-Chandra cell have the same associated variety.  It also follows that if a Harish-Chandra cell $\Chc$ contains one highest weight Harish-Chandra module then $\Chc$ consists entirely of highest weight Harish-Chandra modules.  By our discussion of special orbits, we know that the only nilpotent $K$-orbits occurring as  associated variety for a Harish-Chandra cell of highest weight Harish-Chandra modules (infinitesimal character $\rho$) are $\Cal O_k$, with $k$ even or $k=n$.  It is also a fact (Appendix \ref{app:B}) that if irreducible representations in a Harish-Chandra cell have associated variety $\bar{\Cal O}_k$, then the cell consists of highest weight Harish-Chandra modules. 

A Harish-Chandra cell $\Chc$ determines a representation of $W$ on a subquotient of the coherent continuation representation (\cite{BarbaschVogan83}).  This representation is spanned by $\Chc$ and will be denoted by $V(\Chc)$.  If $\bar{\Cal O^\C}$ is the associated variety of the annihilator of representations in $\Chc$, then the special representation $\pi(\Cal O^\C)$ occurs in $V(\Chc)$ as a $W$-submodule (\cite[Cor.~14.11]{VoganIC4}).  The results of \cite{McGovern98} tell us much more.  Theorem 7 of \cite{McGovern98} states that all Harish-Chandra cell representations  $V(\Chc)$ for $Sp(2n,\R)$ are of the particular type, called `Lusztig'.  It follows that the decomposition of $V(\Chc)$ into irreducible $W$-representations may be determined explicitly in a combinatorial manner in terms of the symbols.  This procedure is given in \cite[Ch.~12]{Lusztig84} and is described in a convenient way in \cite[Thm.~2.12]{McGovern96}.

For the orbits $\Cal O_k$ one sees that if the associated variety of annihilator for $\Chc$ is $\bar{\Cal O}_k^\C$ (so $k$ is even or $k=n$) the $W$-representations $V(\Chc)$ are as follows.  If $k$ is even and nonzero, the symbol of $\pi(\Cal O_k^\C)$ is
\begin{equation*}
\setcounter{MaxMatrixCols}{15}
\begin{pmatrix}
0 & & 1 & & 2 & &   \cdots &  n-k & & n-k+2 &  n-k+3 &  \cdots & & & \hspace{-18pt} n-\frac{k}{2} + 1 \\
& 1 & & 2 & & 3 & & & \cdots &    & &  &   n-\frac{k}{2} & 
\end{pmatrix}.
\end{equation*}
The `Lusztig' left cell is a sum of this $W$-representation and the one with symbol
\begin{equation*}
\begin{pmatrix}
0 & & 1 & & 2 & & &  \cdots &    &    &    & & \hspace{-18pt} n-\frac{k}{2} + 1 \\
& 1 & & 2 & &   \cdots & n-k  & & n-k+2 & \,\,n-k+3   & \cdots  &  n-\frac{k}{2} & 
\end{pmatrix}.
\end{equation*}
One easily checks that this symbol is the one for $\pi(\Cal O_{k-1}^\C)$.  If $k=n$ is odd, then $V(\Chc)$ is the irreducible representation $\pi(\Cal O_n^\C)$.  This gives the following proposition.

\begin{Prop}\label{prop:size}  
If $\Chc$ is a Harish-Chandra cell consisting of irreducible Harish-Chandra modules of associated variety $\bar{\Cal O}_k$ (so $k$ is even or $k=n$), then as $W$-representations
$$V(\Chc)=\begin{cases} \pi(\Cal O_k^\C) \oplus \pi(\Cal O_{k-1}^\C),  & \text{if $k>0$ is even}  \\
\pi(\Cal O_n^\C),  & \text{if $k=0$ or $k=n$, odd}
\end{cases}
$$  
and 
$$
^\#\Chc=\begin{cases}  {n\choose \frac{k}{2}}+ {n\choose \frac{k}{2}-1},  &k>0,k\text{ even},  \\
{n\choose \frac{n-1}{2}}, &k=n, k\text{ odd},  \\
\;1,  & k=0.
\end{cases}
$$
\end{Prop}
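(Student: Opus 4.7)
The plan is to follow the outline already foreshadowed in the paragraph preceding the statement. By \cite[Thm.~7]{McGovern98}, every Harish-Chandra cell representation $V(\Chc)$ for $Sp(2n,\R)$ is of ``Lusztig'' type, which reduces the problem to computing the Lusztig left cell attached to the special representation $\pi(\Cal O_k^\C)$. This is a purely combinatorial computation with Lusztig symbols via the procedure of \cite[Ch.~12]{Lusztig84} (tabulated conveniently in \cite[Thm.~2.12]{McGovern96}). Once the irreducible decomposition is known, the cardinality of $\Chc$ follows from $^\#\Chc=\dim V(\Chc)$ together with the dimension formula $\dim\pi(\Cal O_k^\C)=\binom{n}{[k/2]}$ from (\ref{eqn:sp-dim}).

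First I would write down the symbol of $\pi(\Cal O_k^\C)$ explicitly using the partition-to-symbol algorithm of \cite[\S10.1]{CollingwoodMcgovern93} applied to the partition of $\Cal O_k$; this yields precisely the first displayed symbol in the discussion preceding the proposition. For $k>0$ even, Lusztig's similarity class of that symbol contains exactly one other element, obtained by exchanging a single entry between the top and bottom rows in the manner indicated; the result is the second displayed symbol. Running the partition-to-symbol algorithm backward on the partition of $\Cal O_{k-1}^\C$ then verifies that the second symbol is indeed that of $\pi(\Cal O_{k-1}^\C)$. This identification gives the decomposition $V(\Chc)=\pi(\Cal O_k^\C)\oplus\pi(\Cal O_{k-1}^\C)$ claimed in the proposition.

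The two remaining cases are separate. When $k=n$ is odd, the symbol of $\pi(\Cal O_n^\C)$ has all entries distinct with no admissible swap producing another valid symbol, so its Lusztig similarity class is a singleton and $V(\Chc)=\pi(\Cal O_n^\C)$. When $k=0$, the associated variety of every representation in $\Chc$ is the origin, so $\Chc$ consists of finite-dimensional Harish-Chandra modules of infinitesimal character $\rho$, of which there is only the trivial representation; hence $V(\Chc)$ is one-dimensional. Combining these three cases with (\ref{eqn:sp-dim}) gives the cardinalities $\binom{n}{k/2}+\binom{n}{k/2-1}$, $\binom{n}{(n-1)/2}$, and $1$, respectively.

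The main obstacle is not conceptual but bookkeeping: correctly extracting Lusztig's symbols from the partitions of $\Cal O_k$ and $\Cal O_{k-1}$ and checking that the elementary ``swap'' defining similarity of symbols in type $C$ relates them exactly. Everything else is a direct invocation of the McGovern--Lusztig combinatorics together with the dimension formula already stated in the text.
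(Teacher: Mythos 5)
Your proposal follows the same route as the paper: invoking McGovern's Theorem 7 that $Sp(2n,\R)$ cells are Lusztig, writing the symbol of $\pi(\Cal O_k^\C)$ via the Collingwood--McGovern algorithm, identifying the unique partner symbol in the similarity class as $\pi(\Cal O_{k-1}^\C)$ when $k>0$ is even, noting that the similarity class is a singleton when $k=n$ is odd, and then reading off cardinalities from the dimension formula (\ref{eqn:sp-dim}). Your direct argument for $k=0$ (finite-dimensionality forces $\Chc=\{\C\}$) is a harmless shortcut; otherwise this is precisely the argument embedded in the paragraph preceding the proposition.
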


%%%%%%

\subsection{} In this section we explicitly determine the Harish-Chandra cells of highest weight Harish-Chandra modules.  
\begin{Lem}\label{lem:exist}
For $k=0,1,\dots,n$ and either $k$ even or $k=n$, there is an irreducible highest weight Harish-Chandra module of infinitesimal character $\rho$ with associated variety $\Bar{\Cal O}_k$.
\end{Lem}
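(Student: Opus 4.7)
My plan is to exhibit, for each allowed $k$, an explicit irreducible highest weight Harish-Chandra module $L_c$ with $AV(L_c)=\bar{\Cal O}_k$, by selecting a clan $c\in\Cg(k)$ whose orbit closure $\bar{\fQ}_c$ is \emph{smooth}.  Once smoothness is in hand, item~(d) of \S\ref{subsec:1-cc} gives $CC(L_c)=[T_c]$, and combined with $\mu(T_c)=\bar{\Cal O}_k$ (from $c\in\Cg(k)$) this forces $AV(L_c)=\bar{\Cal O}_k$ exactly.

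The two endpoints are dispatched directly.  For $k=0$ the trivial representation $\C$ corresponds under Proposition~\ref{prop:w-c} to $w=(\text{-}1\,\text{-}2\cdots\text{-}n)$ and to the clan $(1\,2\cdots n)$; its support is all of $\fB$, trivially smooth, so $AV=\{0\}=\bar{\Cal O}_0$.  For $k=n$ the scalar holomorphic discrete series $L_{c_0}$ with $c_0=(+\cdots +)$ and $w=(n\cdots 21)$ has support equal to the single closed $K$-orbit $K\cdot\fb$, also smooth, and $c_0\in\Cg(n)$.

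For intermediate even $k$, $0<k<n$, I would take $c=c^{(k)}:=((+)^{k/2}\,1\,2\cdots (n-k/2))\in\Cg(k)$ (membership in $\Cg(k)$ follows from formula (3.1) by iteration), which corresponds via Proposition~\ref{prop:w-c} to $w^{(k)}=(n\,(n-1)\cdots(n-k/2+1)\,\text{-}1\,\text{-}2\cdots\text{-}(n-k/2))\in\cW$.  To establish smoothness of $\bar{\fQ}_{c^{(k)}}=Z_{w^{(k)}}$ I would induct on $n$: by Lemma~\ref{lem:ind}(ii), $w^{(k)}=\gs w'$, where $w'\in\cW'$ is the Weyl group element for the analogous smaller clan $c^{(k-2)}$ and $\gs=s_{n-1}\cdots s_1$.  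Since $\gs$ is the minimal-length coset representative for $W/W'$ (it sends every positive root of $W(C_{n-1})$ to a positive root of $W(C_n)$), the product is length-additive, and the Bott--Samelson construction presents $Z_{w^{(k)}}$ as $n-1$ successive $\mathbb{P}^1$-fibration steps $Z_u\mapsto\pi_{\ga_j}^{-1}\pi_{\ga_j}(Z_u)$ starting from $Z_{w'}$; each such step preserves smoothness.

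The main obstacle is the geometric transfer at the inductive step: the inductive hypothesis delivers smoothness of the smaller orbit closure $\bar{\fQ}_{c^{(k-2)}}\subset\fB'$, whereas the Bott--Samelson argument in $\fB$ requires smoothness of $Z_{w'}$ as a subvariety of the larger $\fB$.  Bridging these calls for identifying $Z_{w'}$ in $\fB$ as a fiber of the partial-flag projection $\fB\to G/P$ for the standard parabolic $P\supset B$ whose Levi is (conjugate to) $G'$, and checking that this identification preserves smoothness --- everything else in the argument is a routine unwinding of the clan formulas.
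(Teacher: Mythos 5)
Your overall strategy --- exhibit, for each admissible $k$, a clan $c\in\Cg(k)$ whose orbit closure $\bar{\fQ}_c$ is smooth, so that $CC(X_c)=[T_c]$ and $AV(X_c)=\mu(T_c)=\bar{\Cal O}_k$ --- is exactly the paper's, and your choice of clans $c^{(k)}$ coincides with the paper's $c_l$ (with $l=k/2$). The endpoint cases $k=0$ and $k=n$ are fine. The problem is your argument for smoothness of $Z_{w^{(k)}}$ at intermediate $k$, which has a real gap.

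You propose to obtain $Z_{w^{(k)}}$ from $Z_{w'}$ by a chain of $\mathbb{P}^1$-fibration steps $Z_u\mapsto\pi_{\ga_j}^{-1}\pi_{\ga_j}(Z_u)$ and assert that each step preserves smoothness. This is false in general: $\pi_\ga^{-1}\pi_\ga(Z_u)=Z_{us_\ga}$ is a $\mathbb{P}^1$-fibration over $\pi_\ga(Z_u)$, and that base is the \emph{image} of $Z_u$, not $Z_u$ itself; images of smooth varieties need not be smooth. Concretely, in type $A_3$, $Z_{(3142)}$ is smooth while $\pi_{\ga_2}^{-1}\pi_{\ga_2}(Z_{(3142)})=Z_{(3412)}$ is the standard singular Schubert variety, so the step does not preserve smoothness. (This is also the reason the Bott--Samelson variety is only a \emph{resolution} of $Z_w$: the iterated $\mathbb{P}^1$-bundle is smooth, but it maps to $Z_w$ birationally, not isomorphically.) There is a second, more structural mismatch: $\pi_\ga^{-1}\pi_\ga(Z_u)=Z_{us_\ga}$ corresponds to \emph{right} multiplication, whereas the factorization $w^{(k)}=\gs w'$ from Lemma \ref{lem:ind}(ii) is a \emph{left} multiplication by $\gs=s_{n-1}\cdots s_1$; since $w'$ does not commute with $\gs$, a reduced chain of right multiplications taking $w'$ to $w^{(k)}$ is not what $\gs w'$ provides, so the ``$n-1$ successive steps starting from $Z_{w'}$'' you describe are not even well-posed.

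The paper avoids all of this with a global, one-step argument: $w_l$ is the maximal-length element of its $W(C_{n-l})$-coset, so $Z_{w_l}=\pi_l^{-1}\bigl(\bar{Bw_l\cdot\fq_l}\bigr)$ is a fiber bundle (with Levi flag-variety fibers) over $\bar{Bw_l\cdot\fq_l}\subset\fF_l$; then a short computation using $w_l W(C_{n-l})=w_{K,0}W(C_{n-l})$ shows $\bar{Bw_l\cdot\fq_l}=K\cdot\fq_l$, a single $K$-orbit, hence smooth, hence $Z_{w_l}$ is smooth. (Equivalently, $X_{\fQ_{c_l}}$ is the cohomologically induced module $\Cal R_{\bar{\fq}_l}(\C)$.) If you want to salvage your inductive approach, you would need to replace the $\mathbb{P}^1$-fibration claim with an argument of this type at each step --- at which point you may as well run the paper's direct argument. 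Your identification of $\mu(T_{c^{(k)}})=\bar{\Cal O}_k$ via the recursion (\ref{eqn:g-cells}) rather than a direct rank computation is a harmless variant.
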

\begin{proof}  Let $\fQ_l$ be the $K$-orbit in $\fB$ corresponding to the clan $c_l=(+\dots+1\,2\dots n\text{-}l)$.  We first show that $\bar{\fQ_l}$ is smooth.  By Prop.~2.5, $\bar{\fQ_l}=Z_{w_l}$, $w_l=(n\,n\text{-}1\,\dots\, n\text{-}l\text{+}1\,\text{-}1\,\text{-}2\,\dots\,\text{-}(n\text{-}l))$.  It is easy to verify that $w_l$ has maximal length in its $W(C_{n-l})$ coset.  Let $\pi_l:\fB\to\fF_l$ be the natural projection, where $\fF_l$ is the generalized flag variety of parabolics conjugate to $\fq_l$ defined by $\gD(\fl)=\langle S\smallsetminus\{\ga_1,\dots,\ga_l\}\rangle$.  It follows that $Z_{w_l}=\pi_l^{-1}(\bar{Bw_l\cdot\fq_l})$.  So $Z_{w_l}$ is smooth if $\bar{Bw_l\cdot\fq_l}$ is smooth.  However, $w_l$ and the long element $w_{K,0}$ of $W(K)$ are in the same $W(C_{n-l})$ coset, so $\bar{Bw_l\cdot\fq_l}=\bar{Bw_{K,0}\cdot\fq_l}$.  Now
\begin{equation*}
K\cdot\fq_l\supset \bar{w_{K,0}B^{w_{K,0}^{-1}}\cdot\fq_l}
 =\bar{Bw_{K,0}\cdot\fq_l} 
 =\bar{Bw_{K,0} B\cdot\fq_l}
 \supset \bar{Bw_{K,0} B}\cdot\fq_l
 =K\cdot\fq_l,
\end{equation*}
It follows that $\bar{Bw_l\cdot\fq_l}=K\cdot\fq_l\simeq K/K\cap Q_l$ is smooth, so also $Z_{w_l}$.

By smoothness, $AV(X_{\fQ_{c_l}})=\mu(T_{c_l})$.  The Weyl group element corresponding to $c_l$ is
$$
w_l=(n\,n\text{-}1\dots n\text{-}l\text{\smaller+}1\;\text{-}1\;\text{-}2\dots \text{-}(n\text{-}l)),
$$ 
and $\mu(T_{c_l})=\bar{B\cdot\fn\cap\fn^{w_l}}$.  Since $\gD(\fn\cap\fn^{w_l})=\{\ge_i+\ge_j : 1\leq i\leq n,\,n-l+1\leq j\leq n\}$, a matrix in $\fn\cap\fn^{w_l}\subset\sym(n)$ has the form 
$$
\begin{bmatrix}
  A  &  0  \\
  C  &  S_lA^tS_{n-l}
\end{bmatrix},\,\,  
A\in M_{(n-l)\times l}, C\in \sym(l).
$$
The maximal rank of such a matrix is $\min\{2l,n\}$.  Therefore, all $\bar{\Cal O}_k,$ with either $k$ even or $k=n$ occur as associated varieties .
\end{proof}

\begin{Rem} The representations $X_{\fQ_{c_l}}$ in the above proof are the cohomologically induced representations $\Cal R_{\bar{\fq}_l}(\C)$, where $\fq_l$ is defined by $H_l:=\sum_{i=1}^l\ge_i$, ($\gD(\fq_l)=\{\ga : \IP{H_l}{\ga}\geq 0\}$).
\end{Rem}
\begin{Rem}  The lemma also follows from \cite[\S7]{Joseph92}.  It is proved there that the unitary spherical highest weight modules with highest weights $\gl_k=-\dfrac{k}{2}\gL_n$ ($\gL_n$ the $n^\text{th}$ fundamental weight) have associated variety $\bar{\Cal O}_k$, for each $k=0,1,\dots,n$.  The infinitesimal character $\gl_k+\rho$ is integral exactly when $k$ is even or $k=n$.  This infinitesimal character may be singular, but the translation principle says that, when $k$ is even, there exists a highest weight Harish-Chandra module with infinitesimal character $\rho$ and the same associated variety.
\end{Rem}

\begin{Cor} For each $k$ even and for $k=n$ (when $n$ is odd), $0\leq k\leq n$, there is exactly one Harish-Chandra cell $\Chc(k)$ consisting of irreducible Harish-Chandra modules of associated variety $\bar{\Cal O}_k$.  The sizes of these cells are
$$^\#\Chc(k)=\begin{cases}
       {n+1\choose \frac{k}{2} },  &  k \text{ even}  \\
       {n\choose \frac{n-1}{2}},  &  k=n\text{ odd}.
       \end{cases}
$$      
\end{Cor}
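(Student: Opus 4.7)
The plan is to combine Lemma~\ref{lem:exist}, Proposition~\ref{prop:size}, and a counting argument based on the identity $^\#\cW=2^n$ from Section~\ref{subsec:hwt-hc}.

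For each admissible $k$ (i.e.\ $k$ even or $k=n$ odd), Lemma~\ref{lem:exist} supplies an irreducible highest weight Harish-Chandra module of infinitesimal character $\rho$ with associated variety $\bar{\Cal O}_k$. The Harish-Chandra cell containing this module, which I call $\Chc(k)$, exists and, by the remark preceding Proposition~\ref{prop:size}, consists entirely of highest weight modules, all with the same associated variety $\bar{\Cal O}_k$. Applying Proposition~\ref{prop:size} yields $^\#\Chc(k)=\binom{n}{k/2}+\binom{n}{k/2-1}=\binom{n+1}{k/2}$ (by Pascal's rule) when $k>0$ is even, $^\#\Chc(k)=\binom{n}{(n-1)/2}$ when $k=n$ is odd, and $^\#\Chc(k)=1=\binom{n+1}{0}$ when $k=0$, which matches the claimed formula.

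For uniqueness I would count. Let $N_k$ be the number of Harish-Chandra cells of highest weight modules with associated variety $\bar{\Cal O}_k$; the previous paragraph shows $N_k\ge 1$ for each admissible $k$. Since every irreducible highest weight Harish-Chandra module of infinitesimal character $\rho$ lies in exactly one cell, and every such cell has associated variety $\bar{\Cal O}_k$ for some admissible $k$ (by the discussion preceding Proposition~\ref{prop:size}), one obtains
$$2^n = {}^\#\cW = \sum_{k\text{ admissible}} N_k\cdot{}^\#\Chc(k).$$
It then suffices to verify the combinatorial identity $\sum_k{}^\#\Chc(k)=2^n$ with the sizes computed above, for then each $N_k$ is forced to equal $1$. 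When $n$ is even the sum is $\sum_{j=0}^{n/2}\binom{n+1}{j}=2^n$ by symmetry of $\binom{n+1}{j}$ about the (non-integer) center $j=(n+1)/2$. When $n$ is odd the sum is $\sum_{j=0}^{(n-1)/2}\binom{n+1}{j}+\binom{n}{(n-1)/2}=2^n$, which follows after noting $\binom{n+1}{(n+1)/2}=2\binom{n}{(n-1)/2}$.

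The only step requiring more than routine bookkeeping is the combinatorial identity above, and even that reduces to two elementary symmetry observations on Pascal's triangle; all the substantial content is carried by Lemma~\ref{lem:exist} (existence of a highest weight module for each admissible associated variety) and by the Lusztig-theoretic size formula of Proposition~\ref{prop:size}.
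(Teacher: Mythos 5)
Your proposal is correct and follows essentially the same approach as the paper: existence from Lemma~\ref{lem:exist}, size from Proposition~\ref{prop:size}, and uniqueness by showing the cells $\Chc(k)$ exhaust all $2^n$ highest weight Harish-Chandra modules. The only cosmetic difference is in the combinatorial verification of $\sum_k {}^\#\Chc(k)=2^n$ (you use symmetry of $\binom{n+1}{j}$ about its center, the paper telescopes via Pascal's rule), and your explicit introduction of the counts $N_k$ makes the uniqueness deduction a bit more formal than the paper's phrasing; the substance is identical.
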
 
\begin{proof} Existence is Lemma \ref{lem:exist} and the size is Proposition \ref{prop:size}.  The uniqueness follows from our comment that only highest weight Harish-Chandra modules can have associated variety $\bar{\Cal O}_k$ by checking that the $\Chc(k)$'s exhaust all $2^n$ highest weight Harish-Chandra modules.  For $n$ even
\begin{align*}
 \sum_{k\text{ even}}&^\#\Chc(k)=\sum_{l=0}^\frac{n}{2}{^\#}\Chc(2l)  \\
     &=^\#\Chc(0)+\sum_{l=1}^\frac{n}{2}{^\#}\Chc(2l)  \\
     &=1+\sum_{l=1}^{\frac{l}{2}}\left({n\choose l }+{n\choose l-1 }  \right)  \\
     &=\sum_{l=0}^n {n\choose l } \\
     &=2^n.
\end{align*}
When $n$ is odd one similarly gets
$$\sum_{k\text{ even}}{^\#}\Chc(k)+^\#\Chc(n)=2^n.$$
\end{proof}

The Harish-Chandra cells may now be described exactly.
\begin{Prop}\label{prop:hc-cells} The Harish-Chandra cells $\Chc(k)$ are as follows.
\begin{enumerate}
 \item $\Chc(0)=\{\C\}.$
 \item If $n$ is odd, $\Chc(n)$ consists of all highest weight Harish-Chandra module $X_\fQ$ of support $\bar{\fQ}$ with $\fQ$ having clan in $\Cg(n)$.
 \item If $k$ is even, $\Chc(k)$ consists of  all highest weight Harish-Chandra module $X_\fQ$ of support $\bar{\fQ}$ with $\fQ$ having clan in $\Cg(k)\cup\Cg(k-1)$.
\end{enumerate}
\end{Prop}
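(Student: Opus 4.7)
The plan is to combine the size counts from Propositions \ref{prop:34} and \ref{prop:size} with a simple lower bound on the associated variety, then run a single induction. First I would define $f\colon\Cl\to\{0,2,4,\dots\}\cup\{n\}$ by setting $f(c)=l$ when $X_c\in\Chc(l)$. This is well-defined because every highest weight Harish-Chandra module lies in some cell (by Lemma \ref{lem:exist} and the discussion preceding it), because the $K$-orbits in $\fp_+$ form a chain so that $AV(X_c)=\bar{\Cal O}_l$ for a single $l$, and because the annihilator's associated variety must be special, forcing $l$ to be even or $l=n$.

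The key lower bound is: since $X_c$ is irreducible with support $\bar{\fQ_c}$, the coefficient of $[T_c^*\fB]$ in $CC(X_c)$ equals $1$ (Section \ref{subsec:1-cc}(c)), so $AV(X_c)\supseteq\mu(T_c)=\bar{\Cal O}_k$ for $c\in\Cg(k)$, giving $f(c)\geq k$. When $k$ is odd and $k<n$ the orbit $\bar{\Cal O}_k^\C$ is not special, which combined with the parity constraint on $f(c)$ strengthens this to $f(c)\geq k+1$.

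I would then induct through the list $l_0=0<l_1=2<l_2=4<\cdots$ (appending $n$ if $n$ is odd). The base case $l_0=0$ simultaneously proves (1): the lower bound forces $f^{-1}(0)\subseteq\Cg(0)$, iterating (\ref{eqn:g-cells}) gives $\Cg(0)=\{(1\,2\cdots n)\}$, and this clan corresponds via Proposition \ref{prop:w-c} to the Weyl element $(-1\,-2\cdots-n)$, i.e.\ to the trivial representation $\C$; since Proposition \ref{prop:size} gives $|\Chc(0)|=1$, equality holds. For the inductive step at $l_j$, the induction hypothesis excludes all clans in $\bigcup_{i<j}(\Cg(l_i)\cup\Cg(l_i-1))$ from $f^{-1}(l_j)$, and the lower bound excludes $\Cg(k)$ with $k>l_j$; the only remaining possibilities are $\Cg(l_j)\cup\Cg(l_j-1)$, so $f^{-1}(l_j)\subseteq\Cg(l_j)\cup\Cg(l_j-1)$. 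Pascal's identity $\binom{n}{l_j/2}+\binom{n}{l_j/2-1}=\binom{n+1}{l_j/2}$ together with Propositions \ref{prop:34} and \ref{prop:size} then forces equality. The terminal step at $l=n$ for $n$ odd is analogous, using $|\Cg(n)|=\binom{n}{(n-1)/2}=|\Chc(n)|$.

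Given the machinery already in place, there is no serious obstacle. The one point that requires care is the \emph{strict} bound $f(c)>k$ for $c\in\Cg(k)$ with $k$ odd and $k<n$, which rests on the non-specialness of $\bar{\Cal O}_k^\C$ in that case together with the fact, already recalled in Section \ref{sec:hc-cells}, that annihilator varieties of Harish-Chandra cells are always special.
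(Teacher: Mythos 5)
Your proposal is correct and follows essentially the same route as the paper: the paper likewise pairs the containment coming from $\mu(T_c)\subseteq AV(X_c)$ (hence $\fQ\in\Cg(j)$ with $j\leq k$ whenever $X_\fQ\in\Chc(k)$) with the size counts of Propositions \ref{prop:34} and \ref{prop:size}, and runs an induction over the even $k$. The one cosmetic difference is that you make the parity strengthening $f(c)\geq k+1$ for odd $k<n$ explicit via non-specialness of $\bar{\Cal O}_k^\C$, whereas the paper absorbs this automatically into the inductive step (once $\Chc(l)=\Cg(l)\cup\Cg(l-1)$ is known for all even $l<k'$, every $\Cg(j)$ with $j\leq k'-2$ is already accounted for), so that extra observation, while true, is not actually needed.
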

\begin{proof}
(1) is clear.  For (2), each $X_\fQ$ with $\fQ\in\Cg(n)$, has associated variety $\bar{\Cal O}_n$ (since this is already the moment map image of the conormal bundle for the support).  But $^\#\Chc(n)=^\#\!\!\Cg(n)$, so (2) follows.

We check (3) by applying induction to $k$ (even).  Note that 
\begin{align}\label{eqn:A}
&^\#\Chc(k)=^\#\!\!\Cg(k)+^\#\!\!\Cg(k-1),
\intertext{and}\label{eqn:B}
&\text{if }X_\fQ\in\Chc(k), \text{ then }\fQ\in\Cg(j)\text{ for some }j\leq k.
\end{align}
If $k=2$ the statement of (3) holds as no $X_\fQ, \fQ\in \Cg(j)$ with $j>2$ can have associated variety $\bar{\Cal O}_2$.  Therefore, $\fQ\in\Cg(2)\cup\Cg(1)$ (since $\fQ\notin\Cg(0)$).

Now assume (3) holds for $k<k'$ and suppose that $AV(X_\fQ)=\bar{\Cal O}_{k'}$.   Then $\fQ\in\cup_{j\leq k'}\Cg(j)$.  Since (by induction) all $\fQ\in\cup_{j\leq k'-2}\Cg(j)$ are accounted for, $\fQ$ must be in $\Cg(k')\cup\Cg(k'-1)$.
\end{proof}

%%%%%%%%%%%%%%%%%%%%%%%%%%%%%%%%%%%%%%%%%%%%%%%%%%

\section{A Lemma}\label{sec:lemma}
The Riemann-Hilbert correspondence gives an equivalence of categories between $\Cal M_c(\Cal D_\fB,K)$ and a derived category of perverse sheaves.  The characteristic cycle of a perverse sheaf is defined in \cite{Kashiwara85}.  It is shown there that if $\fX\in \Cal M_c(\Cal D_\fB,K)$ and $RH(\fX)$ is the corresponding perverse sheaf, then $CC(\fX)=CC(RH(\fX))$.  If $X\in\Cal M(\fg,K)$ is irreducible and $\fX$ is the corresponding localization (and $\supp(\fX)=\bar{\fQ}$), then $RH(\fX)=\IC_{\bar{\fQ}}$, the intersection cohomology sheaf.  This complex of sheaves depends only on the topology of $\bar{\fQ}$.

\subsection{Normal slices} Let $w_1,w\in W$ with $Z_{w_1}\subset Z_w$.  Consider the affine open neighborhood 
$$U_{w_1}=w_1\bar{N}\cdot x_0, \; x_0=\fb\in\fB,
$$
of $x_{w_1}:=w_1\cdot x_0$ in $\fB$.  Here $\fb=\fh+\fn$ is the fixed Borel subalgebra of \S\ref{subsec:sp} and $\bar{N}=\exp(\bar{\fn}),$ $\fnbar:=\sum_{\ga\in\gD^+}\fg^{(-\ga)}$.
Then 
\begin{align*}
U_{w_1}\cap (B\cdot x_{w_1})&= w_1(\bar{N}\cap w_1^{-1}Bw_1)\cdot x_0  \\
&=(w_1\bar{N}w_1^{-1}\cap N)w_1\cdot x_0  \\
&=\exp(\bar{\fn}^{w_1}\cap\fn)\cdot x_{w_1}.
\end{align*}
Therefore,
$$S_1:=\exp(\bar{\fn}^{w_1}\cap\bar{\fn})\cdot x_{w_1}
$$
is a normal slice to $B\cdot x_{w_1}$ at $x_{w_1}$.  This means that (i) $S_1$ is smooth, (ii) $S_1\cap (B\cdot x_{w_1})=\{x_{w_1}\}$, and (iii) $T_{x_{w_1}}S_1 + T_{x_{w_1}}(B\cdot x_{w_1})=T_{x_{w_1}}\fB$.  We want to understand $Z_w\cap S_1$; this will tell us about the occurrence of $\bar{T}_{Z_{w_1}}$ in $CC(\IC_{Z_w})$.

Now assume that $w_1,w\in\cW$.  We consider two cases. \\
{\bf Case I.} $w\leftrightarrow c=(1\,c'),w_1\leftrightarrow (1\,c_1').$  \\
{\bf Case II.} $w\leftrightarrow c=(+\hspace{1pt}c'),w_1\leftrightarrow (+\hspace{1pt}c_1').$  \\
Let $w'\leftrightarrow c'$ and $w_1'\leftrightarrow c_1'$, then $w_1',w'\in\cW'$, as described in Lemma~\ref{lem:ind}.

In the following discussion we view $G'$ as a subgroup of $G$ as in \S\ref{subsec:sp}.  The flag variety of $\fB'$ is $G'\cdot (\fb\cap\fg')\simeq G'\cdot\fb$.  It follows that, on identifying $\fB$ with isotropic flags in $\C^n$, $\fB'$ consists of those isotropic flags $(F_i)$ so that $F_1=\C e_1$.

\begin{Lem}\label{lem:4b}  Let $w_1\in\cW$.  Then $\bar{\fn}\cap\bar{\fn}^{w_1},\bar{\fn}'\cap\bar{\fn}'^{w_1'}\subset \fp_-\simeq \sym(n)$. \\
\indent {\upshape(a)} In case I,  $\bar{\fn}\cap\bar{\fn}^{w_1}=\bar{\fn}'\cap\bar{\fn}'^{w_1'}$.  \\
\indent {\upshape(b)} In case II,  
$\bar{\fn}\cap\bar{\fn}^{w_1'}=\gs(\bar{\fn}'\cap\bar{\fn}'^{w_1})\cup \gs\left(\fp_-\!\smallsetminus\! \fp_-'\right).
$
\end{Lem}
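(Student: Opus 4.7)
My plan is to identify each of $\bar{\fn}\cap\bar{\fn}^{w_1}$ and $\bar{\fn}'\cap\bar{\fn}'^{w_1'}$ with its set of roots in $\gD(\fp_-)$ and then count roots, exactly as was done in the proof of the analogous lemma for $\mu(T_c)$ in Section 3.1. The preliminary assertion $\bar{\fn}\cap\bar{\fn}^{w_1}\subset\fp_-$ (and similarly for $\fg'$) is immediate from the fact recorded in \S\ref{subsec:bo} that $\fn\cap\fn^{w_1}\subset\fp_+$ for $w_1\in\cW$: the set of $\gd\in\gD^-$ with $w_1^{-1}\gd\in\gD^-$ is the negative of the set of $\gb\in\gD^+$ with $w_1^{-1}\gb\in\gD^+$, so it lies in $-\gD(\fp_+)=\gD(\fp_-)$.

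For (a), Lemma \ref{lem:ind} gives $w_1=s_{2\ge_1}w_1'$, and $s_{2\ge_1}$ fixes $\gD(\fh',\fg')$ pointwise because those roots are orthogonal to $\ge_1$. Hence for $\gd\in\gD(\bar{\fn}')$ one has $w_1^{-1}\gd=w_1'^{-1}\gd\in\gD(\fh',\fg')$, and the sign of $w_1'^{-1}\gd$ in $\gD^+(\fh,\fg)$ coincides with its sign in $\gD^+(\fh',\fg')$. This yields $\gd\in\gD(\bar{\fn}\cap\bar{\fn}^{w_1})\iff\gd\in\gD(\bar{\fn}'\cap\bar{\fn}'^{w_1'})$. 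To finish (a) I would show that no root in $\gD(\fp_-)\smallsetminus\gD(\fp_-')=\{-\ge_1-\ge_j:1\leq j\leq n\}$ contributes: a short case check gives $w_1^{-1}(-2\ge_1)=w_1'^{-1}(2\ge_1)=2\ge_1>0$, and for $j\geq 2$, $w_1^{-1}(-\ge_1-\ge_j)=w_1'^{-1}(\ge_1-\ge_j)=\ge_1\mp\ge_k>0$ where $w_1'^{-1}\ge_j=\pm\ge_k$ with $k\geq 2$.

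For (b), $w_1=\gs w_1'$ with $\gs\in W_K$, so $\gs$ preserves $\gD(\fp_-)$. Applying $\gs^{-1}$ to $\gD(\bar{\fn}\cap\bar{\fn}^{w_1})$ translates the membership $\gs\ga\in\gD(\bar{\fn}^{w_1})$ into the condition $w_1^{-1}\gs\ga=w_1'^{-1}\ga\in\gD^-$. I would then split $\gD(\fp_-)=\gD(\fp_-')\sqcup\bigl(\gD(\fp_-)\smallsetminus\gD(\fp_-')\bigr)$. For $\ga\in\gD(\fp_-')$ the same sub-root-system sign argument as in (a) reduces the condition to $\ga\in\gD(\bar{\fn}'\cap\bar{\fn}'^{w_1'})$, producing the summand $\gs(\bar{\fn}'\cap\bar{\fn}'^{w_1'})$. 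For $\ga=-\ge_1-\ge_j$ in $\gD(\fp_-)\smallsetminus\gD(\fp_-')$, the fact that $w_1'$ fixes $\ge_1$ and sends each $\ge_j$ ($j\geq 2$) to $\pm\ge_k$ with $k\geq 2$ gives $w_1'^{-1}\ga\in\{-2\ge_1,\,-\ge_1-\ge_k,\,\ge_k-\ge_1\}\subset\gD^-$, so every such $\ga$ contributes and yields the $\gs(\fp_-\smallsetminus\fp_-')$ summand.

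The main obstacle, and the natural place for a sign error, is the bookkeeping required to decide whether roots such as $\ge_k-\ge_1$ with $k\geq 2$ are positive or negative relative to $\gD^+(\fh,\fg)$; the inequality $1<k\leq n$ is what puts $\ge_k-\ge_1$ in $\gD^-$, and the fact that $w_1'\in W'$ merely signed-permutes $\{\ge_2,\dots,\ge_n\}$ (and never maps any $\ge_j$, $j\geq 2$, to $\pm\ge_1$) is precisely what prevents the apparently problematic image $w_1'^{-1}\ge_j=-\ge_k$ from producing a positive root in $w_1'^{-1}\ga$.
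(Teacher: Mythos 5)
Your proof is correct and follows essentially the same approach as the paper's: translate membership in $\bar{\fn}\cap\bar{\fn}^{w_1}$ into sign conditions on $w_1'^{-1}$ applied to roots, using $w_1=s_{2\ge_1}w_1'$ (resp.\ $w_1=\gs w_1'$) from Lemma \ref{lem:ind} together with the facts that $w_1'$ fixes $\ge_1$ and signed-permutes $\{\ge_2,\dots,\ge_n\}$, and that $\gs\in W_K$ preserves $\gD(\fp_-)$. If anything your bookkeeping is slightly cleaner than the paper's (whose proof of case II works with the larger set $\gD(\fn)\smallsetminus\gD(\fn')$ rather than with $\gD(\fp_-)\smallsetminus\gD(\fp_-')$, and is cavalier about signs), and you make the chain of equivalences, hence both inclusions, explicit.
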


\begin{proof}  In case I, Lemma~\ref{lem:ind} tells us that $w_1=s_{2\ge_1}w_1'$ and  $w=s_{2\ge_1}w'$.  Therefore,
\begin{align*}
  \ga\in &\gD(\fn\cap\fn^{w_1}) \iff  \ga>0 \text{ and } w_1^{-1}\ga >0  \\
   &\iff \ga>0 \text{ and } w_1'^{-1} s_{2\ge_1}\ga >0  \\
   &\iff \ga>0 \text{ and } w_1'^{-1}\ga >0, \ga\in\gD(\fn').
\end{align*}
The last equivalence holds since $w_1'^{-1}s_{2\ge_1}(\ge_1)=-\ge_1$ and so no root $w_1'(-\ge_1\pm\ge_j)$ is positive.

In case II, Lemma~\ref{lem:ind} says that $w_1=\gs w_1'$.  Let $\ga=\gs(\gb)\in \gs(\bar{\fn}'\cap\bar{\fn}'^{w_1})\cup \gs\left(\gD(\fn)\!\smallsetminus\! \gD(\fn')\right).$  Then $\ga>0$ and $w_1^{-1}\ga=w_1'^{-1}\gs^{-1}\ga=w_1'^{-1}\gb>0$.  This is clear when $\gb\in \bar{\fn}'\cap\bar{\fn}'^{w_1}$.  When $\gb\in\gD(\fn)\!\smallsetminus\!\gD(\fn')$, $w_1'^{-1}\gb=\ge_1\pm\ge_k>0$ (some $k$), since $w_1'(\ge_1)=\ge_1$.
\end{proof}

Appendix \ref{app:1} gives a description of the relevant Schubert varieties in terms of flags.  The statement (Proposition \ref{prop:sb}) is that for $a=(a_1,\dots,a_n)$  defined by
$$
a_i=\#\{j : w(j)>0\text{ and } j\leq i\},
$$
$Z_w=\{(F_i) : \dim(F_i\cap (\C^n\times\{0\}))\geq a_i, i=1,\dots n\}.$  Viewing $Z_{w'}\in\fB'$ as a flag in $\fB$,
$$
Z_{w'}=\{(F_i) : \dim(F_i\cap (\C^n\times\{0\}))\geq a_i', i=1,\dots n\},
$$
$a_i'=\#\{j : w'(j)>0\text{ and } j\leq i\}.$  From the form for $w'$ given in the proof of the above Lemma, we have 
\begin{equation}\label{eqn:as}
\begin{split}
  &\text{in case I, } a_i'=a_i+1,  \\
  &\text{in case II, } a_i'=a_{i}.
\end{split}
\end{equation}

The following is our main geometric lemma.  We write $S_1'$ for the slice in $\fB'$ defined by $w_1'$.  Therefore, we have the coordinates
\begin{equation}\label{eqn:xx}
\begin{split}
  & \varphi_1:\bar{\fn}\cap\bar{\fn}^{w_1}\to S_1, \varphi_1(X)=\exp(X)\cdot x_{w_1},  \\
  & \varphi_1':\bar{\fn}'\cap\bar{\fn}'^{w_1'}\to S_1', \varphi_1'(X)=\exp(X')\cdot x_{w_1'},  
\end{split}
\end{equation}

\begin{Lem}\label{lem:main}  With $w_1,w\in\cW$ and $w_1',w'\in\cW'$ as above and with $Z_{w_1}\subset Z_w$, we have 
$$
Z_w\cap S_1\simeq Z_{w'}\cap S_1'.
$$
\end{Lem}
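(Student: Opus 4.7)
The plan is to use the flag-variety description of Schubert varieties from Proposition~\ref{prop:sb} together with the coordinates $\varphi_1,\varphi_1'$ on the slices. A point of $S_1$ can be written $\exp(X)\cdot x_{w_1}$ with flag $F_i = \exp(X)w_1\langle e_1,\ldots,e_i\rangle$, and it lies in $Z_w$ precisely when $\dim(F_i\cap(\C^n\times\{0\}))\geq a_i$ for all $i$; the analogous statement holds for primes. The numerical bridge between the two sides is relation~\eqref{eqn:as}: $a_i'=a_i+1$ in Case~I, and $a_i'=a_i$ in Case~II.

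For Case~I, Lemma~\ref{lem:4b}(a) gives $\bar{\fn}\cap\bar{\fn}^{w_1}=\bar{\fn}'\cap\bar{\fn}'^{w_1'}$, and this common space lies in $\fg'$, so $\exp(X)$ fixes both $e_1$ and $e_{2n}$. By Lemma~\ref{lem:ind}(i), $x_{w_1}=s_{2\ge_1}\cdot x_{w_1'}$, so the flags for $\exp(X)\cdot x_{w_1}$ and $\exp(X)\cdot x_{w_1'}$ differ only in that the first line through $F_i$ is $\langle e_{2n}\rangle$ rather than $\langle e_1\rangle$, while the other vectors $\exp(X)w_1'(e_j)$, $2\leq j\leq i$, all lie in $\langle e_2,\ldots,e_{2n-1}\rangle$. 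Since $\langle e_1\rangle\subset \C^n\times\{0\}$ but $\langle e_{2n}\rangle\cap(\C^n\times\{0\})=0$, it follows that $\dim(F_i(\exp(X)\cdot x_{w_1'})\cap(\C^n\times\{0\}))=1+\dim(F_i(\exp(X)\cdot x_{w_1})\cap(\C^n\times\{0\}))$, which combined with $a_i'=a_i+1$ makes the two flag conditions equivalent. The identity on the common parameter space then furnishes the required isomorphism.

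For Case~II, Lemma~\ref{lem:4b}(b) says $\bar{\fn}\cap\bar{\fn}^{w_1}$ strictly contains $\gs(\bar{\fn}'\cap\bar{\fn}'^{w_1'})$ by the extra $n$-dimensional subspace $\gs(\fp_-\smallsetminus\fp_-')$, and Lemma~\ref{lem:ind}(ii) gives $x_{w_1}=\gs\cdot x_{w_1'}$. Because $\gs=s_{n-1}\cdots s_1$ permutes $\{e_1,\ldots,e_n\}$ among themselves, it stabilizes $\C^n\times\{0\}$. Decomposing $X=Y+Z$ with $Y\in\gs(\bar{\fn}'\cap\bar{\fn}'^{w_1'})$ and $Z\in\gs(\fp_-\smallsetminus\fp_-')$, I would expand each condition $\dim(F_i(\exp(Y+Z)\cdot x_{w_1})\cap(\C^n\times\{0\}))\geq a_i$ in $(Y,Z)$ and show, using $a_i=a_i'$, that the conditions split into those matching $Z_{w'}\cap S_1'$ in $Y$ (via the identification $\gs^{-1}(\cdot)\gs$) together with $n$ additional equations that determine $Z$ uniquely as a regular function of $Y$. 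The map $\exp(Y+Z)\cdot x_{w_1}\mapsto \exp(\gs^{-1}Y\gs)\cdot x_{w_1'}$ is then the claimed isomorphism.

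The main obstacle is Case~II: tracking the action of the cyclic permutation $\gs$ on flag levels, and verifying that the extra conditions imposed by $Z_w$ pin down $Z$ as a regular function of $Y$ rather than cutting out a positive-codimensional subvariety in the $Z$-direction. A dimension count via lengths shows that the codimension of $Z_w\cap S_1$ in $S_1$ exceeds that of $Z_{w'}\cap S_1'$ in $S_1'$ by exactly $n$, consistent with this picture, but making the isomorphism explicit requires identifying the defining equations for $Z$ in terms of $Y$.
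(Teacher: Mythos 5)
Your Case~I is correct and matches the paper's argument. In Case~II your setup is also the paper's: you conjugate by $\gs$, split $X$ into the $\gs(\bar\fn'\cap\bar\fn'^{w_1'})$-part $Y$ and the $\gs(\fp_-\smallsetminus\fp_-')$-part $Z$, observe that $\gs$ stabilizes $\C^n\times\{0\}$ so that the flag conditions on the $Y$-part transfer directly under conjugation, and you correctly verify that the codimensions differ by exactly $n$. But the proposal then stops at the crucial step: you conjecture that the remaining Schubert conditions determine $Z$ as some unspecified regular function of $Y$, and you explicitly flag this as the obstacle.

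The paper closes this gap in one stroke, and the answer is simpler than your phrasing anticipates: $Z$ is forced to vanish identically, by a single flag-level condition. Since $c_1 = (+\,c_1')$, the short form of $w_1$ begins with $n$, so the base flag $x_{w_1}$ has first term $\C e_n = \gs\cdot\C e_1$. Write $Y = \gs(X')$ and $Z = \gs(\hat Z)$ with $X'\in\bar\fn'\cap\bar\fn'^{w_1'}$ and $\hat Z\in\sum_{j}\fg^{(-\ge_1-\ge_j)}$. Since $\fp_-$ is abelian and $X'\in\fg'$ fixes $e_1$,
\begin{equation*}
  F_1\bigl(\exp(Y+Z)\cdot x_{w_1}\bigr) = \gs\exp(X'+\hat Z)\cdot\C e_1 = \gs\exp(\hat Z)\exp(X')\cdot\C e_1 = \gs\exp(\hat Z)\cdot\C e_1.
\end{equation*}
Since $w(1)=n$, we have $a_1=1$, so membership in $Z_w$ already at flag level one forces $F_1\subset\C^n\times\{0\}$; applying $\gs^{-1}$, which preserves $\C^n\times\{0\}$, this becomes $\exp(\hat Z)e_1 = e_1 + \hat Z e_1\in\C^n\times\{0\}$. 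But $\hat Z e_1\in\{0\}\times\C^n$ and $\hat Z\mapsto\hat Z e_1$ is injective on $\sum_j\fg^{(-\ge_1-\ge_j)}$, so $\hat Z e_1=0$ forces $\hat Z=0$, i.e., $Z=0$. All $n$ of your expected extra equations are thus supplied by this one flag-level condition, not by a system you must track across all levels. With $Z=0$, the isomorphism is simply conjugation by $\gs$ — exactly the map you anticipated — and the argument is complete.
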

\begin{proof}
We prove the two cases separately.  \\
\underline{Case I}. In this case we have $\bar{\fn}\cap\bar{\fn}^{w_1}=\bar{\fn}'\cap\bar{\fn}'^{w_1}$, so it suffices to show that for $X\in\bar{\fn}\cap\bar{\fn}^{w_1}$, $\varphi_1(X)\in Z_w$ if and only if $\varphi_1'(X)\in Z_{w'}$.  Note that the flag $x_w$ has first term $\C e_{2n}$, since $s_{2\ge_1}(e_1)=e_{2n}$.

Since $X\in \bar{\fn}'\subset\fg'$,
\begin{equation*}
\begin{split}
  & \varphi_1(X)=(F_i); \; F_i=\C e_{2n}\oplus \exp(X)\left(\spa\{e_{w_1(j)} : j=2,3,\dots,i\}\right)  \\
  & \varphi_1'(X)=(F_i'); \;F_i'=\C e_{1}\oplus \exp(X)\left(\spa\{e_{w_1'(j)} : j=2,3,\dots,i\}\right)   \\
  &\hspace{90pt}=\C e_{1}\oplus \exp(X)\left(\spa\{e_{w_1(j)} : j=2,3,\dots,i\}\right) . 
\end{split}
\end{equation*}
It is clear from this that 
$$\dim(F_i'\cap (\C^n\times \{0\}))=\dim(F_i\cap(\C^n\times\{0\}))+1.
$$
By equation (\ref{eqn:as}), it follows that $(F_i)\in Z_w\iff (F_i')\in Z_{w'}$. \\
\underline{Case II}.  In this case $\bar{\fn}\cap\bar{\fn}^{w_1}=\gs\left(\bar{\fn}'\cap\bar{\fn}'^{w_1'} + \sum_{j}\fg^{(-\ge_1-\ge_j)}\right).$  When $X=\gs(X'), X'\in\bar{\fn}'\cap\bar{\fn}'^{w_1'}$, we have
\begin{align*}
  \varphi_1(X)&=\exp(\gs(X'))\cdot x_w  \\
    & =\exp(\gs(X'))\gs \cdot x_{w'}  \\
    & =\gs\exp(X')\cdot x_{w'}  \\
    & =\gs\varphi_1'(X').
\end{align*}
Now write $\varphi_1(X)=(F_i)$ and $\varphi_1'(X')=(F_i')$, so the above calculation says $(F_i)=(\gs F_i')$.  Then
\begin{align*}
  \varphi_1(X)&\in Z_w  \iff \dim(F_i\cap(\C^n\times\{0\}))\geq a_i  \\
    & \iff \dim(\gs F_i'\cap (\C^n\times\{0\}))\geq a_i  \\
    & \iff \dim(F_i'\cap(\C^n\times\{0\}))\geq a_i, \text{ since }\gs(\C^n\times\{0\})=\C^n\times\{0\},  \\
    & \iff \varphi_1'(X')\in Z_{w'}, \text{ by (\ref{eqn:as})}.
\end{align*}
It will follow that $Z_w\cap S_1=\gs(Z_{w'}\cap S_1')$, once we show that if $X=\gs(X')+\gs(Y), X'\in\bar{\fn}'\cap\bar{\fn}'^{w_1'}$ and $Y\in \sum_{j}\fg^{(-\ge_1-\ge_j)},$ then $\varphi_1(X)\in Z_w$ can only hold when $Y=0$.

We now check this.  Write $\varphi_1(X)=\exp(\gs(X')+\gs(Y))\cdot x_w=(F_i)$.  Then $F_1=\exp(\gs(X')+\gs(Y))\cdot\C e_n=\gs\exp(X'+Y)\cdot\C e_1=\gs\exp(Y)\exp(X')\cdot \C e_1=\gs\exp(Y)\cdot\C e_1.$  Thus, $F_1$ is the span of 
$$
\gs\exp(Y)e_1=\gs \begin{bmatrix}
  1 & & & &  & & \\
  & \ddots &  & & & & \\
   & &1  &  & & & \\
  y_1 &  & 0 & 1 & & & \\
   \vdots& &  & & & \ddots & \\
  y_n &\dots & y_1& & & & 1
\end{bmatrix}
\begin{bmatrix}
   1 \\
   0 & \\
   \vdots & \\
    0& \\
   \vdots& \\
   0& 
\end{bmatrix}=
\begin{bmatrix}
   0 \\
   \vdots & \\
   1 & \\
   y_1 & \\
   \vdots& \\
   y_n& 
\end{bmatrix}.
$$
Since $w=(n\cdots)$, $a_1=1.$  Therefore, $\varphi_1(X)\in Z_w$ implies $\dim(F_1\cap(\C^n\times\{0\}))\geq 1$.  This can only happen when $Y=0$.
\end{proof}

\subsection{}\label{prop:first}
Let $w_1,w$ and $w_1',w'$ be as in the preceding section.  In particular $Z_{w_1}\subset Z_w$ and $Z_{w_1'}\subset Z_{w'}$.  From Lemma \ref{lem:main} we may conclude that $CC(\IC_{Z_w})$ contains $[T_{Z_{w_1}}^*\fB]$ with the same multiplicity that $CC(\IC_{Z_{w'}})$ contains $[T_{Z_{w_1'}}^*\fB']$. 

Since $w\leftrightarrow c$ means $Z_w=\fQ_c$ we may conclude the following proposition.

\begin{Prop}\label{prop:slices-clan}
Let $c,c_1\in\Cl$ and $\fQ_{c_1} \subset\bar{\fQ_c}$.
\begin{enumerate}
\item If $c=(+\hspace{1pt}c'),c_1=(+\hspace{1pt}c_1')$, then $T_{c_1}$ occurs in $CC(X_c)$ with the same multiplicity that $T_{c_1'}$ occurs in $CC(X_{c'})$.
\item  If $c=(1\,c'),c_1=(1\,c_1')$, then $T_{c_1}$ occurs in $CC(X_c)$ with the same multiplicity that $T_{c_1'}$ occurs in $CC(X_{c'})$.
\end{enumerate}
\end{Prop}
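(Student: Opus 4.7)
The plan is to reduce the claim to a purely local comparison via the Riemann--Hilbert correspondence, after which Lemma~\ref{lem:main} finishes the job.

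First I would pass to the sheaf side. The localization $\fX_c$ corresponds under Riemann--Hilbert to the intersection cohomology sheaf $\IC_{Z_w}$, where $w\leftrightarrow c$ as in Proposition~\ref{prop:w-c}, and $CC(X_c)=CC(\IC_{Z_w})$. Since $\bar{\fQ_c}=Z_w$ and $\bar{\fQ_{c_1}}=Z_{w_1}$, and since $\fQ_{c_1}$ and $B_{w_1}$ are each smooth of dimension $\ell(w_1)$ and Zariski-dense in their common closure $Z_{w_1}$, the conormal closures $\bar{T_{\fQ_{c_1}}^*\fB}$ and $\bar{T_{B_{w_1}}^*\fB}$ are both irreducible of dimension $\dim\fB$ and coincide over the dense open $\fQ_{c_1}\cap B_{w_1}\cap Z_{w_1}^{sm}$ of $Z_{w_1}$; hence they are equal. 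Thus the number $m_{c_1,c}$ appearing in the proposition is exactly the multiplicity of $[\bar{T_{B_{w_1}}^*\fB}]$ in $CC(\IC_{Z_w})$, and the same reduction applies verbatim on the primed side.

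Second, I would invoke the standard microlocal principle that the multiplicity of a conormal bundle in $CC(\IC_Z)$ depends only on the analytic germ of $Z$ along a transverse slice to the corresponding stratum. In our setup a neighborhood of $x_{w_1}$ in $\fB$ is biholomorphic to $(B_{w_1}\cap U_{w_1})\times S_1$; in this decomposition $Z_w$ restricts to a product and $\IC_{Z_w}$ factors (up to shift) as the external product of the constant sheaf on $B_{w_1}\cap U_{w_1}$ with $\IC_{Z_w\cap S_1}$. Consequently the multiplicity of $[\bar{T_{B_{w_1}}^*\fB}]$ in $CC(\IC_{Z_w})$ equals the multiplicity of $[T^*_{x_{w_1}}S_1]$ in $CC(\IC_{Z_w\cap S_1})$, a quantity depending only on the germ $(Z_w\cap S_1,x_{w_1})$.

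Finally, Lemma~\ref{lem:main} supplies an isomorphism of germs $(Z_w\cap S_1,x_{w_1})\simeq (Z_{w'}\cap S_1',x_{w_1'})$, which identifies the two intersection cohomology sheaves on the slices and therefore matches the two local multiplicities. Running the same microlocal identification backwards on $\fB'$ identifies this common number with $m_{c_1',c'}$, handling cases (1) and (2) by a single argument. The main obstacle is justifying the microlocal factorization cleanly; this is standard and can be extracted from the K\"unneth behavior of characteristic cycles under exterior products applied to the local biholomorphism $U_{w_1}\cong (B_{w_1}\cap U_{w_1})\times S_1$, or equivalently from the description of $m_{c_1,c}$ as a local Euler characteristic of vanishing cycles of $\IC_{Z_w}$ along a generic linear form vanishing on $B_{w_1}$, which manifestly depends only on the transverse slice.
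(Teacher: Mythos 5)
Your argument is correct and takes exactly the route the paper intends: pass to $\IC$ sheaves via Riemann--Hilbert, observe that the $K$-orbit and $B$-orbit conormal closures over $Z_{w_1}$ coincide, reduce the multiplicity to a normal-slice germ, and then apply Lemma~\ref{lem:main}. The paper states the inference from Lemma~\ref{lem:main} to the multiplicity equality without writing out the microlocal product decomposition along $U_{w_1}\cong (B_{w_1}\cap U_{w_1})\times S_1$; your proposal simply fills in that standard step.
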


%%%%%%%%%%%%%%%%%%%%%%%%%%%%%%%%%%%%%%%%%%%%%%%%%%

\section{Characteristic cycles}  In this section we determine all characteristic cycles of highest weight Harish-Chandra modules.

\subsection{} In \S\ref{subsec:bo} we showed that for two orbits $\fQ_1,\fQ$ whose closures are supports of highest weight Harish-Chandra modules, $\fQ_1\subset\bar{\fQ}$ implies $\mu(\bar{T_{\fQ_1}^*\fB})\supseteq\mu(T_{\fQ}^*\fB)$.  This immediately gives us that
\begin{equation}\label{eqn:lem-bo}
\fQ_i\in\Cg(i)\text{ and }\fQ_j\in\Cg(j)\text{ with }i<j\text{ implies }\fQ_i\nsubseteq\bar{\fQ_j}.
\end{equation}
This leads to an immediate narrowing down of possible terms in characteristic cycles.

\begin{Prop}\label{prop:nd}  Suppose $k$ is even and $0\leq k\leq n$.  Let $X_\fQ\in\Chc(k)$ with $\supp(X_\fQ)$.  Then the following hold.
\begin{enumerate}
\item If $\fQ\in\Cg(k)$ and $\bar{T_{\fQ_1}^*\fB}$ occurs in $CC(X_\fQ)$, then $\fQ_1\in\Cg(k)$.
\item If $\fQ\in\Cg(k-1)$ and $\bar{T_{\fQ_1}^*\fB}$ occurs in $CC(X_\fQ)$, then $\fQ_1\in\Cg(k)\cup\Cg(k-1)$.
\end{enumerate}
When $n$ is odd and $\fQ\in\Cg(n)$, if $\bar{T_{\fQ_1}^*\fB}$ occurs in $CC(X_\fQ)$, then $\fQ_1\in\Cg(n)$.
\end{Prop}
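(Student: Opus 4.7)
The plan is to pin down, for each $\fQ_1$ contributing to $CC(X_\fQ)$, the unique index $j$ with $\mu(\bar{T_{\fQ_1}^*\fB}) = \bar{\Cal O}_j$ by sandwiching $j$ between an upper bound coming from the associated variety and a lower bound coming from Proposition \ref{prop:weak}. Because $X_\fQ \in \Chc(k)$ we have $AV(X_\fQ) = \bar{\Cal O}_k$, and since $X_\fQ$ is a highest weight Harish-Chandra module its associated variety is contained in $\fp_+$, in which the $K$-orbits form the totally ordered chain $\bar{\Cal O}_0 \subsetneq \bar{\Cal O}_1 \subsetneq \cdots \subsetneq \bar{\Cal O}_n$ of (\ref{eqn:Korbit}).

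Concretely, first invoke fact (b) of \S\ref{subsec:1-cc} so that $\fQ_1 \subset \supp(X_\fQ) = \bar{\fQ}$. Next, $\mu(\bar{T_{\fQ_1}^*\fB})$ is an irreducible $K$-invariant subvariety of $AV(X_\fQ) \subseteq \fp_+$, so it equals some $\bar{\Cal O}_j$; the identification of $\Cg(j)$ with \emph{all} clans whose moment map image is $\bar{\Cal O}_j$ (from Appendix B) places $\fQ_1$ in $\Cg(j) \subseteq \Cl$. With both $\fQ_1$ and $\fQ$ in $\Cl$ and $\fQ_1 \subset \bar{\fQ}$, Proposition \ref{prop:weak} yields $\bar{\Cal O}_j = \mu(\bar{T_{\fQ_1}^*\fB}) \supseteq \mu(\bar{T_\fQ^*\fB})$, which reads $j \geq k$ in case (1), $j \geq k-1$ in case (2), and $j \geq n$ in the odd-$n$ case with $\fQ \in \Cg(n)$. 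Combined with the upper bound $\bar{\Cal O}_j \subseteq AV(X_\fQ) = \bar{\Cal O}_k$ (respectively $\bar{\Cal O}_n$), the conclusions $j = k$, $j \in \{k-1, k\}$, and $j = n$ follow immediately.

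The only subtle point, rather than a genuine obstacle, is that one must know $\fQ_1 \in \Cl$ \emph{before} invoking Proposition \ref{prop:weak}: the proposition is stated only for pairs of clans in $\Cl$, and a priori there is no reason an arbitrary $K$-orbit $\fQ_1 \subset \bar{\fQ}$ should correspond to a clan in $\Cl$. This is handled precisely by the Appendix B equality $\Cg(j) = \{c : \mu(T_c) = \bar{\Cal O}_j\}$ used in the second step above; once that is in hand the argument reduces to the two-sided bound on $j$, and nothing further is needed.
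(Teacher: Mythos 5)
Your proof is correct and follows essentially the same two-sided squeeze as the paper's: an upper bound on $\mu(\bar{T_{\fQ_1}^*\fB})$ from the containment in $AV(X_\fQ)=\bar{\Cal O}_k$, and a lower bound from the closure relation $\fQ_1\subset\bar{\fQ}$ combined with Proposition \ref{prop:weak} (which the paper packages as (\ref{eqn:lem-bo})). Your extra remark that one must first place $\fQ_1$ in $\Cl$ via the Appendix \ref{app:B} identification of $\Cg(j)$ with all clans having moment image $\bar{\Cal O}_j$ is a nice explication of a point the paper passes over silently with ``Suppose $\fQ_1\in\Cg(j)$,'' but it is not a different argument.
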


\begin{proof}  Let $k$ be even and suppose $\bar{T_{\fQ_1}^*\fB}$ occurs in $CC(X_\fQ)$.  Then we know that (i) $\mu(\bar{T_{\fQ_1}^*\fB})\subseteq AV(X_\fQ)=\bar{\Cal O}_k$ and (ii) $\fQ_1\subset\bar{\fQ}$.  Suppose $\fQ_1\in\Cg(j)$.  When $\fQ\in\Cg(k)$, (i) says $j\leq k$ and (ii) (along with (\ref{eqn:lem-bo})) gives $j\geq k$.  Now (1) follows. When  $\fQ\in\Cg(k-1)$, (i) and (ii) give $j\leq k$ and $j\geq k-1$.  So (2) is proved.

For the last statement, if $\fQ_1\in \Cg(j)$ and $\fQ_1\subset\bar{\fQ}$,  then by the comment immediately before (\ref{eqn:lem-bo}) we have $\mu(\bar{T_{\fQ_1}^*\fB})\supseteq \bar{\Cal O}_n$, so $\fQ_1\in\Cg(n)$.
\end{proof}

%%%%

\subsection{} We now determine the characteristic cycles of those $X_\fQ$ with $\fQ\leftrightarrow c=(+\hspace{1pt}c')\in\Cl$.  It is a fact that such $X_\fQ$ are cohomologically induced representations $\Cal R_{\bar{\fq}}(Z)$, where $\bar{\fq}=\fl+\bar{\fu}$.  It is a general fact that characteristic cycles behave well under cohomological induction. We will rely on our Proposition \ref{prop:slices-clan} directly.

\begin{Prop}\label{prop:cc+}
Let $\fQ$ have clan $c=(+\hspace{1pt}c')\in\Cl$ and let $X_\fQ$ be the highest weight Harish-Chandra module with support $\bar{\fQ}$.  Suppose $\fQ'$ corresponds to $c'\in\Cl'$ and
$$CC(X_{\fQ'})=\sum_{c_1'}m_{c_1'}[T_{c_1'}].
$$ 
Let $\fQ_1'$ be the $K'$-orbit corresponding to $c_1'$, and $\fQ_1$ corresponding to $c_1=(+\hspace{1pt}c_1')$.  Then
\begin{equation}\label{eqn:cc+}
CC(X_\fQ)=\sum_{c_1'} m_{c_1'}[T_{c_1}].
\end{equation}
\end{Prop}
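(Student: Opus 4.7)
The plan is to prove equation~(\ref{eqn:cc+}) in two halves. First, I would show that each predicted term $[T_{(+\,c_1')}]$ appears in $CC(X_\fQ)$ with multiplicity exactly $m_{c_1'}$; second, I would rule out the possibility that any conormal bundle $[T_{c_1}]$ with $c_1$ \emph{not} of the form $(+\,c_1')$ contributes to $CC(X_\fQ)$. Both halves reduce to Proposition~\ref{prop:slices-clan}(1) once a short combinatorial closure-order argument is in place. I expect that argument to be the only real step, and no appeal to the cell restriction in Proposition~\ref{prop:nd} will be needed for case~II.

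For the first half, fix $c_1' \in \Cl'$ with $m_{c_1'} \neq 0$. By fact~(b) of \S\ref{subsec:1-cc} this forces $\fQ_{c_1'} \subset \bar{\fQ_{c'}}$, and Corollary~\ref{cor:closure} expresses the containment as a chain of $s_\ga$ operations taking $c_1'$ to $c'$ in $\Cl'$. Each such operation acts on the full clan of $c'$ at some position $j$; after prepending $+$ at position $1$ (with $-$ at position $2n$, as required by symmetry) and shifting everything right by one, it lifts to the identical operation at position $j+1$ on the full clan of $(+\,c') \in \Cl$, because the new entries introduced along the way never touch position $1$. Thus the chain lifts to one taking $(+\,c_1')$ to $(+\,c')$, giving $\fQ_{(+\,c_1')} \subset \bar{\fQ}$ by Corollary~\ref{cor:closure}, and Proposition~\ref{prop:slices-clan}(1) directly yields multiplicity $m_{c_1'}$ for $[T_{(+\,c_1')}]$. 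If instead $m_{c_1'} = 0$, the containment $\fQ_{(+\,c_1')} \subset \bar{\fQ}$ also fails (the lifting is a bijection on chains), so the multiplicity is automatically $0$ by fact~(b).

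For the second half, suppose $[T_{c_1}]$ appears in $CC(X_\fQ)$. Fact~(b) gives $\fQ_{c_1} \subset \bar{\fQ}$, and Corollary~\ref{cor:closure} supplies a chain of $s_\ga$ operations from $c_1$ to $c = (+\,c')$. The key observation, and the main technical point of the argument, is that the first entry of $c_1$ must already be a $+$. Inspecting the description of the operations in \S\ref{subsec:clans}, the two moves at position $j = 1$ both require the first entry to lie in $\{+, -\}$ (namely $(\pm,\mp) \to (k,k)$ and $(\pm,k) \to (k,\pm)$), while every operation at $j > 1$ leaves position $1$ untouched. In particular, once position $1$ holds a natural number, no later operation can turn it back into a $+$, so if $c_1$ began with a natural number the chain could never terminate at a clan whose first entry is $+$. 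Thus $c_1 = (+\,c_1')$ for some $c_1' \in \Cl'$, and the first half already pins down its multiplicity as $m_{c_1'}$, establishing~(\ref{eqn:cc+}).
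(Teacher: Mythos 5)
Your overall strategy matches the paper's proof: use fact~(b) of \S\ref{subsec:1-cc} plus the observation that a clan beginning with a natural number cannot lie below $(+\,c')$ in the closure order, then invoke Proposition~\ref{prop:slices-clan}(1) to transfer multiplicities. The second half of your argument (the monotonicity of position~$1$ under the $s_\ga$ operations) is exactly the unstated justification of the paper's first sentence, and that part is correct.

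There is, however, a genuine logical error in the clean-up step of your first half. You assert that if $m_{c_1'}=0$ then the containment $\fQ_{(+\,c_1')}\subset\bar{\fQ}$ must fail, citing the fact that the lifting is a bijection on chains. That bijection gives the equivalence $\fQ_{(+\,c_1')}\subset\bar{\fQ} \iff \fQ_{c_1'}\subset\bar{\fQ_{c'}}$, but fact~(b) is one-directional: it does \emph{not} say that containment implies a nonzero multiplicity. Indeed, taking $c'=(+1)$ for $n'=2$ one has $\fQ_{(++)}\subset\bar{\fQ_{(+1)}}$ even though $CC(X_{(+1)})=[T_{(+1)}]$, so $m_{(++)}=0$ while the containment holds (and correspondingly $\fQ_{(+++)}\subset\bar{\fQ_{(++1)}}$ for $n=3$). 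Your claim is therefore false as stated. The fix is immediate and doesn't need this claim at all: if $\fQ_{(+\,c_1')}\subset\bar{\fQ}$ then Proposition~\ref{prop:slices-clan}(1) already forces the multiplicity of $T_{(+\,c_1')}$ in $CC(X_\fQ)$ to equal $m_{c_1'}$ (in particular $0$ when $m_{c_1'}=0$), and if the containment fails then both sides are $0$ by fact~(b) together with the chain bijection. With that correction, your proposal coincides with the paper's argument, merely written out at greater length.
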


\begin{proof}  Since the clan of $\fQ$ is $c=(+\hspace{1pt}c')$, no $\fQ_1$ with clan beginning with $1$ can be in the closure of $\fQ$.  So, by point (b) in Section \ref{subsec:1-cc}, $\fQ_1$ has clan of the form $c_1=(+\hspace{1pt}c_1')$.  Now Proposition \ref{prop:slices-clan}(1) says that for such $\fQ_1$, $\bar{T_{\fQ_1}^*\fB}$ occurs in $CC(X_\fQ)$ if and only if $\bar{T_{\fQ_1'}^*\fB}$ occurs in $CC(X_{\fQ'})$ (and with the same multiplicity).
\end{proof}

Now consider the cell $\Chc(n)$ when $n$ is odd.  By Proposition \ref{prop:34}, Proposition \ref{prop:size} and equation (\ref{eqn:sp-dim}) we conclude that $\Chc(n)=\Cg(n)$ and the cell representation is irreducible.  It follows from \cite{Joseph80b} and \cite{BarbaschVogan82} that the irreducible representations in $\Chc(n)$ have distinct annihilators.  From equation (\ref{eqn:g-cells}) each irreducible representation in $\Chc(n)$ has support parametrized by a clan of the form $(+\,c')$, so the characteristic cycle of $X_c$ is determined by Proposition \ref{prop:cc+}.  By Proposition \ref{prop:nd}, each irreducible representation in $\Chc(n)$ has leading term cycle equal to its characteristic cycle.  These facts are summarized in the following proposition.

\begin{Prop}\label{prop:ccnodd}
If $n$ is odd, then $\Chc(n)=\Cg(n)$ and the corresponding cell representation is irreducible.  If $X_c$ is an irreducible representation in $\Chc(n)$, then the support $\bar{\fQ}_c$ has clan of the form $c=(+\,c')$ and $LTC(X_c)=CC(X_c)$.  The irreducible representations in $\Chc(n)$ have distinct annihilators.
\end{Prop}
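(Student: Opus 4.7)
The plan is to assemble the four assertions from results already established in Sections 3 and 5, with only the last drawing on external theory.

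First I would read off the identification $\Chc(n)=\Cg(n)$ directly from Proposition \ref{prop:hc-cells}(2), which states that $\Chc(n)$ is precisely the set of highest weight Harish-Chandra modules whose support is parametrized by a clan in $\Cg(n)$. This is consistent with the cardinality count $^\#\Cg(n)=\binom{n}{(n-1)/2}={}^\#\Chc(n)$ obtained from Propositions \ref{prop:34} and \ref{prop:size}. The irreducibility of the cell representation is the $k=n$ case of Proposition \ref{prop:size}, which identifies $V(\Chc(n))=\pi(\Cal O_n^\C)$, and equation (\ref{eqn:sp-dim}) confirms $\dim\pi(\Cal O_n^\C)=\binom{n}{(n-1)/2}$, matching the size of the cell.

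Next, equation (\ref{eqn:g-cells}) gives the explicit description $\Cg(n)=\{(+\,c'):c'\in\Cg'(n-1)\cup\Cg'(n-2)\}$, so every clan indexing a module of $\Chc(n)$ begins with a $+$. Consequently Proposition \ref{prop:cc+} applies and computes $CC(X_c)$ inductively from the characteristic cycle over the smaller flag variety $\fB'$. For the equality $LTC(X_c)=CC(X_c)$, I would invoke the last assertion of Proposition \ref{prop:nd}: when $\fQ\in\Cg(n)$, every $\bar{T_{\fQ_1}^*\fB}$ appearing in $CC(X_\fQ)$ has $\fQ_1\in\Cg(n)$, whence $\mu(\bar{T_{\fQ_1}^*\fB})=\bar{\Cal O}_n=AV(X_\fQ)$. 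So every term of $CC(X_c)$ already has moment map image of maximal dimension and contributes to $LTC(X_c)$.

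The step I expect to be the main obstacle is the distinct annihilators claim, since this is the only assertion that does not follow directly from earlier results of this article. The mechanism I would appeal to is the Joseph--Barbasch--Vogan description of the annihilator map on a Harish-Chandra cell: the number of modules in $\Chc$ sharing a given primitive ideal $J$ is controlled by the multiplicity in $V(\Chc)$ of the Goldie rank representation attached to $J$. Since $V(\Chc(n))=\pi(\Cal O_n^\C)$ is irreducible and itself coincides with the Goldie rank representation attached to primitive ideals with associated variety $\bar{\Cal O}_n^\C$, each primitive ideal occurring in the cell must correspond to a unique module, so the $\binom{n}{(n-1)/2}$ modules in $\Chc(n)$ have pairwise distinct annihilators.
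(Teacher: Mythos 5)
Your proposal is correct and follows essentially the same route as the paper: read $\Chc(n)=\Cg(n)$ and the irreducibility of $V(\Chc(n))$ from Propositions~\ref{prop:hc-cells} and \ref{prop:size}, note from (\ref{eqn:g-cells}) that every clan in $\Cg(n)$ begins with a $+$ so that Proposition~\ref{prop:cc+} applies, deduce $LTC=CC$ from the last clause of Proposition~\ref{prop:nd}, and obtain the distinct annihilators from the fact that $V(\Chc(n))$ is a single irreducible (Goldie-rank) $W$-representation. The paper simply cites Joseph and Barbasch--Vogan for the last point, whereas you unpack the mechanism a bit more (multiplicity of the Goldie rank representation in the cell representation bounds the number of modules with a common annihilator), which is a fair elaboration of the same idea and matches the later use of \cite[Cor.~3]{McGovern98} in the $n$ even case.
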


\subsection{}\label{subsec:ld} In this section we give the characteristic cycles for the low rank cases.  For $n=1$ ($SL(2)$), the clans are $(+)$ and $(1)$; both characteristic cycles are just the conormal bundle closure of the support (since the closures of the supports are smooth).

When $n=2$, (\ref{eqn:g-cells}) and Proposition \ref{prop:hc-cells} give the cells as listed on the following table.  To verify the formulas for the characteristic cycles, let's first look at $(++),(+1)$ and $(12)$.  The closure criterion and the $\tau$-invariant criterion imply that $CC(X_c)$ for these clans is just the conormal bundle closure to $\fQ_c$.  This also follows from the fact that the three $K$-orbit closures with clans $(++),(+1)$ and $(12)$ are smooth.  Now consider $c=(1+)$.  The moment map image is too small to be the associated variety, so there must be some $T_{c'}$, $c'\neq c$, occurring in the characteristic cycle.  The $\tau$-invariant criterion excludes all but $c'=(++)$.  Therefore, $CC(X_{(1+)})=[T_{(1+)}]+m[T_{(++)}]$, for some $m\geq 1$.  

To establish that $m=1$, we consider coherent continuation.  Let $s_2$ be the simple reflection for the long simple root.  Then in the coherent continuation
\begin{equation*}
s_2\cdot X_{(1+)} =X_{(1+)}+X_{(+1)}\text{ and }
s_2\cdot X_{(++)} =X_{(++)}+X_{(+1)}.
\end{equation*}
This formula may be found, for example, using the ATLAS software.  Using the $W$-equivariance of the $CC$ map (\cite{Tanisaki85})
\begin{align*}
s_2&\cdot [T_{(++)}]=s_2\cdot CC(X_{(++)})=CC(s_2\cdot X_{(++)})  \\
&=CC(X_{(++)}+X_{(+1)})=[T_{(++)}]+[T_{(+1)}].
\end{align*}
This gives
\begin{align*}
s_2&\cdot[T_{(1+)}]+m\left([T_{(++)}]+[T_{(+1)}]\right)  \\
&=s_2\cdot[T_{(1+)}]+m\hspace{1pt}s_2\cdot[T_{(++)}]  \\
&=s_2\cdot CC(X_{(1+)})  \\
&=CC(s_2\cdot X_{(1+)})  \\
&=\left( [T_{(1+)}]+m\hspace{1pt}[T_{(++)}]\right)+[T_{(+1)}]
\end{align*}
Therefore,
\begin{equation*}
s_2\cdot[T_{(++)}]=[T_{(++)}]+(1-m)[T_{(+1)}].
\end{equation*}
However, the action of $s_2$ has nonnegative coefficients ($\ga_2$ is not in the $\tau$-invariant), so $m\leq 1$.  Therefore the multiplicity $m$ is one.

Alternatively, one may compute a normal slice to get a cone in $\C^3$.  Thus the multiplicity is the same as that of $\{0\}$ in the nilpotent cone in $\f{sl}(2)$.  This multiplicity is know to be $1$ (\cite[\S 3]{EvensMirkovic99}).

\begin{center}   
\begin{tabular}{|l|c|c|c|c|c|c|c|c|c|c|c|}
\hline
    & clan & $\dim(\fQ)$ & tau  & $\Chc$ & $\Cg$ &  &  CC  \\
\hline
\hline 
1.  & ++   & 1  & 1   & 2 &  2  &  & $T_{(++)}$    \\
2.  & +1   & 2  & 2   & 2 &  2  &  & $T_{(+1)}$   \\
3.  & 1+   & 3  & 1   & 2 &  1  &  & $T_{(1+)}+T_{(++)}$  \\
\hline

4.  & 1\,2 & 4  & 1,2 & 0 &  0  &  & $T_{(1\,2)}$      \\
\hline
\end{tabular}

{$Sp(4,\R)$}
\end{center}

Now consider $n=3$.  Proposition \ref{prop:cc+} gives the characteristic cycles for clans of the form $c=(+\hspace{1pt}c)$.  If we can exclude conormals of the form $T_{(+\hspace{1pt}c_1')}$ from occurring in $CC(X_{(1,c')})$, then Proposition \ref{prop:slices-clan}(2) will determine all $CC_{(1\,c')}$.  This gives the characteristic cycles on the following table.

\begin{center}
\begin{tabular}{|l|c|c|c|c|c|c|c|c|c|c|c|}
\hline
     & clan & $\dim(\fQ)$ & tau  &$\Chc$ & $\Cg$ &    CC  \\
\hline
\hline
1.  &  +++ & 3 & 1,2 & 3 & 3 &   $T_{(+++)}$    \\
2.  &  ++1 & 4 & 1,3 & 3 & 3 &   $T_{(++1)}$      \\
3.  &  +1+ & 5 & 2 & 3 & 3 &   $T_{(+1+)}+T_{(+++)}$      \\
\hline
4.  &  1++ & 6 & 1,2 & 2 & 2 &   $T_{(1++)}$      \\
5.  &  +1\,2 & 6 & 2,3 & 2 & 2   &  $T_{(+1\,2)}$  \\
6.  &  1+2 & 7   & 1,3 & 2 & 2   & $T_{(1+2)}$      \\
7.   & 1\,2+ & 8 & 1,2 & 2 & 1 &    $T_{(1\,2+)}+T_{(1++)}$      \\
\hline
8.  & 1\,2\,3& 9& 1,2,3 & 0 & 0 &   $T_{(1\,2\,3)}$ \\
\hline
\end{tabular}

{$Sp(6,\R)$}
\end{center}

Let us argue that $T_{(+\hspace{1pt}c_1')}$ cannot occur in $CC(X_{(1,c')})$.  We know that $CC(X_{(123)})=[T_{(123)}]$, since $(123)\leftrightarrow \bar{\fQ}=\fB$ (smooth).  The other three clans of the form $c=(1\,c')$ are in $\Chc(2)$, so any $T_{c_1}$ in $CC(X_{c})$ must have $c_1\in\Cg(1)\cup\Cg(2)$.  The only possibility of the form $c_1=(+\hspace{1pt}c_1')$ is $c_1=(+12)$.  However, $\ga_1\in\tau(c_1)\,\smallsetminus\, \tau(c)$, showing that $T_{c_1}$ cannot occur in $CC(X_{(1\,c')})$.  For arbitrary $n$ we will use Lemma \ref{lem:54-A} to exclude $T_{(+\,c_1')}$ from occurring in $CC(X_{(1\,c'))}$ for all cells except $\Chc(n)$, $n$ even.  This is contained in \S\ref{subsec:CC}.

When $n=4$ there is a new phenomenon.  For the cells $\Chc(0)$ and $\Chc(2)$ the argument for the $CC$ is exactly as in the $n=3$ case.  For $\Chc(n), n=4,$ $T_{(+\hspace{1pt}c_1')}$ cannot be excluded from $CC(X_{(1\,c')})$.  In fact $T_{(+\hspace{1pt}c_1')}$ and $T_{(1\,c_1')}$ are exactly the terms appearing in $CC(X_c)$ as $c_1'$ runs over all clans for which $T_{c_1'}$ occurs in $CC(X_{c'})$ for the smaller group $G'$.  We deal with $\Chc(n)$ ($n$ even) in \S\ref{subsec:harder}.  The following table of characteristic cycles will then be justified.

\begin{center}
\begin{tabular}{|l|c|c|c|c|c|c|c|c|c|c|c|}
\hline
    &  clan & $\dim(\fQ)$ & tau  & $\Chc$ & $\Cg$ &    CC \\
\hline\hline
1.    &  ++++ & 6 & 1,2,3 & 4 & 4 &   $T_{(++++)}$  \\
2.    & +++\,1 & 7 & 1,2,4 & 4 & 4   & $T_{(+++\,1)}$      \\
3.    & ++\,1+ & 8 & 1,3 & 4 & 4 &   $T_{(++\,1+)}+T_{(++++)}$        \\
4.    & +\,1++ & 9 & 2,3 & 4 & 4 &   $T_{(+\,1++)}$   \\

5.    & ++\,1\,2 & 9 & 1,3,4 & 4 & 4   & $T_{(++\,1\,2)}$      \\
6.    & +\,1+\,2 & 10 & 2,4 & 4 & 4 &   $T_{(+\,1+\,2)}$     \\
7.    & \,1+++ & 10 & 1,2,3 & 4 & 3 &   $T_{(\,1+++)}+T_{(++++)}$  \\
8.    & \,1++\,2 & 11 & 1,2,4 & 4 & 3   & $T_{(\,1++\,2)}+T_{(+++\,1)}$  \\

9.    & +\,1\,2+ & 11 & 2,3 & 4 & 3 &   $T_{(+\,1\,2+)}+T_{(+\,1++)}$     \\
10.   & \,1+\,2+ & 12 & 1,3 & 4 & 3 &   $T_{(\,1+\,2+)}+T_{(\,1+++)}+T_{(++\,1+)}+T_{(++++)}$    \\
\hline
11.   & +\,1\,2\,3 & 12 & 2,3,4 & 2 & 2   & $T_{(+\,1\,2\,3)}$    \\
12.   & \,1+\,2\,3 & 13 & 1,3,4 & 2 & 2   & $T_{(\,1+\,2\,3)}$    \\
13.   & \,1\,2++ & 13 & 1,2,3 & 2 & 2 &   $T_{(\,1\,2++)}$  \\
14.   &  \,1\,2+\,3 & 14 & 1,2,4 & 2 & 2   & $T_{(\,1\,2+\,3)}$     \\
15.   & \,1\,2\,3+ & 15 & 1,2,3 & 2 & 1 &   $T_{(\,1\,2\,3+)}+T_{(\,1\,2++)}$   \\
\hline
16.   & \,1\,2\,3\,4 & 16 & 1,2,3,4 & 0 & 0   & $T_{(\,1\,2\,3\,4)}$      \\
\hline
\end{tabular} \\
$Sp(8,\R)$
\end{center}

\subsection{}\label{subsec:CC} We now turn to $\fQ$ with clan of the form $c=(1\,c')$ and we  determine $CC(X_c)$ when $c\in\Chc(k)$ with $k$ even and $k\neq n$.  Note that in the case of $k=n$, $n$ odd, is contained in Proposition \ref{prop:ccnodd}.  The case of $\Chc(n)$, $n$ even, is left for \S\ref{subsec:harder}.

\begin{Prop}\label{prop:cc1}
  Suppose $k$ is even and $k\neq n$.  Let $c=(1\,c')\in\Chc(k)$.  If $CC(X_{c'})=\sum m_{c_1'}[T_{c_1'}]$, then 
\begin{equation}\label{eqn:cc1}
  CC(X_{(1\,c')})=\sum m_{c_1'}[T_{(1\,c_1')}].
  \end{equation}
\end{Prop}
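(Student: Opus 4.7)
The plan is to establish \eqref{eqn:cc1} by producing every predicted term on the right-hand side as a genuine contribution to $CC(X_{(1\,c')})$ and then ruling out every other possibility. Because $c=(1\,c')$ begins with $1$, Proposition \ref{prop:slices-clan}(2) applies directly: for each clan $c_1=(1\,c_1')$, the multiplicity of $T_{c_1}$ in $CC(X_{(1\,c')})$ equals the multiplicity $m_{c_1'}$ of $T_{c_1'}$ in $CC(X_{c'})$. This already yields the containment $\sum_{c_1'} m_{c_1'}[T_{(1\,c_1')}]\subseteq CC(X_{(1\,c')})$, so only the reverse containment remains.

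For that, recall every clan in $\Cl$ begins with either $1$ or $+$, so any term of $CC(X_{(1\,c')})$ not already accounted for must correspond to some $c_1=(+\,c_1'')$. The standing hypothesis $c=(1\,c')$ automatically forces $\ga_1\in\tau(c)$: the second entry of $c$ is either $+$ (giving a $k+$ pattern) or $2$ (giving a $k\,k{+}1$ pattern), and both are $\tau$-patterns for $\ga_1$ under the clan criterion of \S\ref{subsec:tab}. Viewing $X_c$ in the $(\fg,B)$-category, the $\tau$-containment forces $\ga_1\in\tau(c_1)$ for every $c_1$ in the support of $CC(X_c)$. But for $c_1=(+\,c_1'')$ the only admissible $\tau$-pattern at positions $1,2$ starting with $+$ is $++$, so $c_1=(++\,c_1''')$. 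I would then invoke Lemma \ref{lem:54-A} together with the cell-membership restrictions of Proposition \ref{prop:nd} and the explicit decomposition of $\Cg(k)$ in \eqref{eqn:g-cells} to exclude such $c_1$, using the hypothesis $k\neq n$ essentially.

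The principal obstacle is the exclusion step, which must use $k\neq n$ in a nontrivial way: the $Sp(8)$ table in \S\ref{subsec:ld} shows that in the cell $\Chc(4)$, where $k=n=4$, actual contributions of the form $T_{(++\,c_1''')}$ do occur. The role of Lemma \ref{lem:54-A} is to encode precisely the combinatorial obstruction, available only when $k<n$, that prevents a clan of the form $(++\,c_1''')$ in $\Cg(k)\cup\Cg(k-1)$ from simultaneously satisfying $\tau(c)\subseteq\tau(c_1)$ and $\fQ_{c_1}\subseteq\bar{\fQ}_c$. Once this exclusion is granted, only the terms $[T_{(1\,c_1')}]$ with $c_1'$ in the support of $CC(X_{c'})$ survive, and they appear with the multiplicities $m_{c_1'}$ dictated by Proposition \ref{prop:slices-clan}(2), completing \eqref{eqn:cc1}.
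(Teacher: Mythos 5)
Your proposal reproduces the paper's argument exactly: Proposition \ref{prop:slices-clan}(2) supplies the $(1\,c_1')$ terms with their multiplicities, and Lemma \ref{lem:54-A} is precisely the paper's device for excluding all $(+\,c_1')$ terms when $k\neq n$ (proved there by a $\rm{\mathbf {T}}_{\ga\gb}$-reduction to $Sp(2(n-2),\R)$). Your intermediate $\tau$-invariant remark (that such a $c_1$ would have to begin $++$) is correct but becomes redundant once Lemma \ref{lem:54-A} is granted.
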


Proposition \ref{prop:slices-clan}(2) says that all terms on the right-hand side of (\ref{eqn:cc1}) occur with the stated multiplicities.  The lemma also says that no other $T_{(1\,c_1')}$ can occur.

The following lemma completes the proof of the proposition by excluding any term $T_{(+\,c_1')}$ from occurring in $CC(X_c)$.

\begin{Lem}\label{lem:54-A}
Suppose $k$ is even and $k\neq n$.  If $c=(1\,c')\in\Chc(k)$ and $T_{c_1}$ occurs in $CC(X_c)$, then $c_1$ is of the form $(1\,c_1')$.
\end{Lem}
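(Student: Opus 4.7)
The plan is to argue by induction on $n$, combining the $\tau$-invariant criterion, the cell constraint of Proposition~\ref{prop:nd}, and a $T_{\alpha\beta}$-reduction (Lemma~\ref{lem:mcgovern+}) that chains Proposition~\ref{prop:cc+}, Proposition~\ref{prop:slices-clan}(2), and the inductive hypothesis applied to a smaller symplectic group. Assume the lemma for all $Sp(2m,\R)$ with $m < n$, and suppose for contradiction that $T_{c_1}$ occurs in $CC(X_c)$ with $c = (1, c') \in \Chc(k)$, $k$ even, $k < n$, and $c_1 = (+, c_1')$.

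First, the $\tau$-invariant pins down the shape of $c_1$. By the corollary following Proposition~\ref{prop:w-c}, the natural numbers in any clan of $\Cl$ appear in strictly increasing order on the left half, so the first two entries of $c$ are necessarily $(1, +)$ or $(1, 2)$, both forcing $\alpha_1 \in \tau(c)$. The $\tau$-invariant criterion gives $\alpha_1 \in \tau(c_1)$; since $c_1$ starts with $+$, the only allowed pattern for entries $1, 2$ of $c_1$ is $(+, +)$, so $c_1 = (+, +, c_1'')$. The cell constraint of Proposition~\ref{prop:nd} then forces $c_1 \in \Cg(k) \cup \Cg(k-1)$; iterating the moment-map lemma of \S\ref{subsec:clans} shows that the high-level edge cases put $c_1$ in $\Cg(n)$, which is excluded by $k < n$, and for $k = 2$ every remaining candidate is ruled out directly by this cell constraint alone.

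To dispose of the surviving candidates (which arise for $k \ge 4$), I would apply a $T_{\alpha\beta}$-reduction. The archetypal configuration is $(\alpha, \beta) = (\alpha_2, \alpha_1)$: when $c = (1, +, 2, c''')$ and $c_1 = (+, +, 1, c_1''')$, both clans lie in the domain of $T_{2, 1}$ (by the formulas of \S\ref{subsec:tab}), and
\[
T_{2,1}(c) = (+, 1, 2, c'''), \qquad T_{2,1}(c_1) = (+, 1, +, c_1'''),
\]
both beginning with $+$. Proposition~\ref{prop:cc+} identifies the multiplicity of $T_{T_{2,1}(c_1)}$ in $CC(X_{T_{2,1}(c)})$ with that of $T_{(1, +, c_1''')}$ in $CC'(X_{(1, 2, c''')})$, and Proposition~\ref{prop:slices-clan}(2) identifies the latter (after stripping the leading $1$ and renumbering) with the multiplicity of $T_{(+, c_1''')}$ in $CC''(X_{(1, c''')})$. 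The stripped clan $(1, c''') \in \Cl''$ lies in $\Chc''(k-2)$ with $k-2$ even and $k-2 \ne n-2$, so the inductive hypothesis for $Sp(2(n-2),\R)$ forces this last multiplicity to vanish because $(+, c_1''')$ is a $(+, \cdot)$-type clan. Lemma~\ref{lem:mcgovern+} (applicable since $\alpha_1, \alpha_2$ have equal length) then yields $m_{c_1, c} = 0$.

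The main obstacle is extending this $T_{\alpha\beta}$-reduction to all remaining configurations of $c$ and $c_1$: for general pairs one must locate the earliest ``transition'' position $j$ at which an operator $T_{\alpha_{j+1}, \alpha_j}$ can be applied to both clans simultaneously, possibly via an intermediate chain of reductions preserving the domain conditions. In configurations where no such reduction exists, the combinatorial rigidity imposed by the $\tau$-invariant forces $c_1$ into a shape whose iterated moment level lands in $\Cg(n)$, again excluded by the cell constraint. A careful combinatorial case analysis, driven by the increasing-order convention for numbers in $\Cl$ and the explicit formulas for $T_{\alpha\beta}$ on clans, completes the proof.
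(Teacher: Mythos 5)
Your proposal correctly identifies the overall strategy (induction on $n$, exclusion via the $\tau$-invariant and cell constraint, and a $\rm{\mathbf{T}}_{\alpha\beta}$-reduction), and your treatment of the archetypal configuration $c=(1+2\,c''')$, $c_1=(++1\,c_1''')$ is essentially the paper's Step~1: apply $\rm{\mathbf{T}}_{21}$ to land both clans in the $(+\,\cdot)$ form, strip the leading $+$ via Proposition~\ref{prop:slices-clan}(1)/Proposition~\ref{prop:cc+}, then strip the leading $1$ via Proposition~\ref{prop:slices-clan}(2), and invoke the inductive hypothesis in $\Chc''(k-2)$ with $k-2\neq n-2$. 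Your explicit observation that the $\tau$-invariant criterion forces $c_1=(++\dots)$ is correct and is a useful refinement: it guarantees the index $r$ of leading $+$'s in $c_1$ satisfies $r\geq 2$, so the operator $\rm{\mathbf{T}}_{r\,r-1}$ is meaningful.

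However, there is a genuine gap in the reduction of an arbitrary pair $(c,c_1)$ to the archetype. You call this ``the main obstacle'' and defer it to ``a careful combinatorial case analysis,'' without exhibiting the one observation that makes the reduction go through. Concretely: write $c_1=(+\cdots+\,1\cdots)$ with $r$ leading $+$'s (so the $1$ sits at position $r+1$; the all-$+$ case is excluded because it lies in $\Cg(n)$). Then $\alpha_r\notin\tau(c_1)$, so by $\tau(c_1)\supseteq\tau(c)$ we get $\alpha_r\notin\tau(c)$; since in any clan of $\Cl$ the numbers occur in increasing order, the only way entries $r,r+1$ of $c$ can fail to be in $\tau$ is the pattern $(+,m)$ — hence $c$ has a $+$ in position $r$. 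Because entry $r$ of $c$ is $+$, entry $r-1$ automatically satisfies $\alpha_{r-1}\in\tau(c)$, so \emph{both} $c$ and $c_1$ lie in the domain of $\rm{\mathbf{T}}_{r\,r-1}$, and by the formulas in \S\ref{subsec:tab} this operator moves the lead number one slot to the left in both clans while keeping $c$ of type $(1\,\cdot)$ and $c_1$ of type $(+\,\cdot)$. Iterating strictly decreases $r$, and Lemma~\ref{lem:mcgovern} preserves the occurrence of the conormal bundle at each step, so the chain always terminates at the archetype. In particular, the escape clause you invoke — ``configurations where no such reduction exists'' — never arises once $c_1=(+\cdots+)$ has been ruled out; you would need to supply this argument (or an equivalent) for the proof to be complete.
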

\begin{proof} Suppose that $c_1=(+\hspace{1pt}c_1')$ occurs in $CC(X_c), c\in\Chc(k)$.  We use induction on $n$ to obtain a contradiction.  The cases $n\leq 3$ have been established in \S\ref{subsec:ld}.  Assume $n>3$.  By Proposition \ref{prop:nd}, $c_1\in\Chc(k)$.

\noindent{\bf Step 1.} If $c=(1+2\dots)$ and $c_1=(++1\dots)$, then $T_{c_1}$ does not occur in $CC(X_c)$.  To prove this we assume otherwise.  For $\ga=\ga_2,\gb=\ga_1$, both $c$ and $c_1$ are in the domain of $\rm{\mathbf {T}}_{\ga\gb}=\rm{\mathbf {T}}_{21}$.  By Lemma \ref{lem:mcgovern}
\begin{equation}\label{eqn:55-1}
  T_{\rm{\mathbf {T}}_{\ga\gb}(c_1)} \text{ occurs in }CC(X_{\rm{\mathbf {T}}_{\ga\gb}(c)}).
\end{equation}
By (\ref{eqn:tabc})
\begin{align*}
  &\rm{\mathbf {T}}_{\ga\gb}(c)\,=\,(+1\;2\;\dots) \\
  &\rm{\mathbf {T}}_{\ga\gb}(c_1)=(+1+\dots).
\end{align*}  
Proposition \ref{prop:slices-clan}(2), (\ref{eqn:55-1}) tells us that 
\begin{align*}
  T_{(1+\dots)} \text{ occurs in } CC(X_{(12\dots)}), \text{ for }G'=Sp(2(n-1)).  
\intertext{Proposition \ref{prop:slices-clan}(1) implies}
  T_{(+\dots)} \text{ occurs in } CC(X_{(1\dots)}), \text{ for }G''=Sp(2(n-2)).  
\end{align*}
Since $c=(1+2\dots)\in\Chc(k)$, we have $(+\,1\dots)\in\Chc'(k)$.  Therefore, $(1\dots)\in\Chc''(k-2)$.  Since $k-2\neq n-2$, our inductive hypothesis says that no $T_{(+\dots)}$ can occur in $CC(X_{(1\dots)})$ (for $G''=Sp(2(n-2))$), giving a contradiction.

\noindent{\bf Step 2.} Now consider arbitrary $c=(1\,c')$ and $c_1=(+\hspace{1pt}c_1')$.  Then $c_1=(+\dots +1\dots)$ with $r\geq 1$ consecutive $+$'s at the beginning.  Note that if no `$1$' occurred, then we would be in $\Chc(n)$, which we have assumed from the beginning is not the case.  Since $\tau(c_1)\supseteq\tau(c)$, $c$ must have a `$+$' in the $r^{\text{th}}$ place:
\begin{align*}
  &c\,=(\,1\dots \;+m\dots)  \\
  &c_1=(+\dots +1\dots).
\end{align*}
There are two possibilities:
\begin{align}
\begin{split}\label{eqn:55-3}
  &c\,=(\,1\,\dots \,++\,m\dots)  \\
  &c_1=(+\dots ++\,\;1\dots)
\end{split}
\intertext{and}
\begin{split}\label{eqn:55-4}
  &c\,=(1\dots m\text{-}1+m\dots)  \\
  &c_1=(+\dots +\;+\;\,1\,\dots).
\end{split}
\end{align}
Let $\ga=\ga_{r},\gb=\ga_{r-1}.$  Then in both cases, both $c$ and $c_1$ are in the domain of $\rm{\mathbf {T}}_{\ga\gb}$ and in the first case we have
\begin{align}
\begin{split}\label{eqn:55-5}
  &\rm{\mathbf {T}}_{\ga\gb}(c)\,=(\,1\,\dots \,+\,m+\dots)  \\
  &\rm{\mathbf {T}}_{\ga\gb}(c_1)=(+\dots +\;1+\dots)
\end{split}
\intertext{and in the second case we have}
\begin{split}\label{eqn:55-6}
  &\rm{\mathbf {T}}_{\ga\gb}(c)\,=(1\dots +m\text{-}1\;m\dots)  \\
  &\rm{\mathbf {T}}_{\ga\gb}(c_1)=(+\dots +\;\,1\,+\dots).
\end{split}
\end{align}
Continuing we reach the case of 
\begin{align*}
  &(\,1+2\dots)  \\
  &(++1\dots).
\end{align*}
This gives that $T_{(++1\dots)}$ occurs in $CC(X_{(1+2\dots)})$ (Lemma \ref{lem:mcgovern}).  Since $\rm{\mathbf {T}}_{\ga\gb}$ preserves associated variety, so preserves Harish-Chandra cells, $(1+2\dots)$ is in $\Chc(k)$, so step 1 applies, giving a contradiction.
\end{proof}

%%%%

\subsection{}\label{subsec:harder} The characteristic cycle of any irreducible Harish-Chandra module $X\in\Chc(n)$ with $n$ even is determined in this section.  This will complete our description of characteristic cycles for highest weight Harish-Chandra modules for $G_\R=Sp(2n,\R)$ in terms of those of highest weight Harish-Chandra modules for $G_\R'=Sp(2(n-1),\R)$.  If $\supp(X)=\bar{\fQ}_{(+\hspace{1pt}c')}$, then we know the characteristic cycle by Proposition \ref{prop:cc+}.  If $\supp(X)=\bar{\fQ}_{(1\,c')}$ with $c=(1\,c')\notin \Chc(n), n$ even, then Proposition \ref{prop:cc1} gives the characteristic cycle.  We address the only remaining case: $\supp(X)=\bar{\fQ}_{(1\,c')}$, $c=(1\,c')\in \Chc(n), n$ even.  The answer is different than when $X$ is in other cells, as the following proposition shows. 

\begin{Prop}\label{prop:top-cell} Let $X_{(1\,c')}\in \Chc(n), n$ even.  Suppose that 
$$ CC(X_{c'})=\sum m_{c_1'}[T_{c_1'}].
$$
Then 
\begin{equation}\label{eqn:harder}
 CC(X_{(1\,c')})=\sum m_{c_1'}[T_{(1\,c_1')}] + \sum m_{c_1'}[T_{(+\hspace{1pt}c_1')}].
\end{equation} 
The second term on the right-hand side is the leading term cycle.
\end{Prop}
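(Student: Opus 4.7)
The plan is to split $CC(X_{(1\,c')})$ into ``parallel'' conormals $T_{(1\,c_1')}$, handled by the slice formula of Section~\ref{sec:lemma}, and ``cross'' conormals $T_{(+\,c_1')}$, which the slice formula does not address. First, by Proposition~\ref{prop:nd}(2) and~(\ref{eqn:g-cells}), any $T_{c_1}$ occurring in $CC(X_{(1\,c')})$ has $c_1$ either of the form $(1\,c_1')$ with $c_1'\in\Cg'(n-1)$, or of the form $(+\,c_1')$ with $c_1'\in\Cg'(n-3)\cup\Cg'(n-2)\cup\Cg'(n-1)$. Proposition~\ref{prop:slices-clan}(2) identifies the $T_{(1\,c_1')}$ contribution with multiplicity $m_{c_1'}$. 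Since $c'\in\Cg'(n-1)$ lies in the top cell $\Chc'(n-1)$ of $G'$, which (as $n-1$ is odd) coincides with $\Cg'(n-1)$ by Proposition~\ref{prop:ccnodd}, every $c_1'$ with $m_{c_1'}\neq 0$ already lies in $\Cg'(n-1)$, and the first sum in~(\ref{eqn:harder}) is established.

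For the cross terms I would use $\rm{\mathbf {T}}_{\ga\gb}$-operators together with coherent continuation. To exclude candidates with $c_1'\in\Cg'(n-3)\cup\Cg'(n-2)$, I would run an inductive $\rm{\mathbf {T}}_{\ga\gb}$-chase modelled on Lemma~\ref{lem:54-A}: via Lemma~\ref{lem:mcgovern+}, any hypothetical forbidden term would propagate to a contradiction with Proposition~\ref{prop:cc1} or with a smaller instance of the present proposition. To match the multiplicity of $T_{(+\,c_1')}$ with $m_{c_1'}$ for $c_1'\in\Cg'(n-1)$, I would apply equal-length $\rm{\mathbf {T}}_{\ga\gb}$-operators simultaneously to $(1\,c')$ and to the candidate $(+\,c_1')$; by Lemma~\ref{lem:mcgovern+} these operators preserve multiplicities, so a suitable sequence reduces to the base clan $c^{\circ}=(1\,+\cdots+)$. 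Its companion $(+\cdots+)$ has smooth support, giving $CC(X_{(+\cdots+)})=[T_{(+\cdots+)}]$, so the proposition reduces to the single identity $CC(X_{c^{\circ}})=[T_{c^{\circ}}]+[T_{(+^n)}]$, which I would establish by coherent continuation along the lines of the $Sp(4,\R)$ calculation in~\S\ref{subsec:ld}: apply $s_{\ga_n}$, use the $W$-equivariance of $CC$, and exploit nonnegativity of coefficients to pin the multiplicity of $T_{(+^n)}$ to~$1$.

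The delicate part is this $\rm{\mathbf {T}}_{\ga\gb}$-propagation: one must check at each move that the operator acts compatibly on the pair $\{(1\,c'),\,(+\,c_1')\}$, preserves the cross form of the companion clan, and tracks the natural-number and plus patterns via the clan formulas~(\ref{eqn:tabc})--(\ref{eqn:tabcc}); this amounts to case-by-case tau-invariant bookkeeping. Once the multiplicities are matched, the leading term cycle identification is automatic: by~(\ref{eqn:g-cells}), orbits $(+\,c_1')$ with $c_1'\in\Cg'(n-1)$ lie in $\Cg(n)$, so $\mu(T_{(+\,c_1')})=\bar{\Cal O}_n=AV(X_{(1\,c')})$, whereas orbits $(1\,c_1')$ lie in $\Cg(n-1)$ with strictly smaller moment image. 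Hence the second sum in~(\ref{eqn:harder}) is exactly $LTC(X_{(1\,c')})$.
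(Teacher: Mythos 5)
Your decomposition into parallel terms $T_{(1\,c_1')}$ (from Proposition~\ref{prop:slices-clan}(2)) and cross terms $T_{(+\,c_1')}$, the exclusion of $c_1'\notin\Cg'(n-1)$ via an argument like Lemma~\ref{lem:54-A}, and the identification of the leading term cycle at the end are all consistent with the paper and correct. But for the central claim --- that each $T_{(+\,c_1')}$ with $m_{c_1'}\neq 0$ occurs with multiplicity exactly $m_{c_1'}$ --- you propose a genuinely different route from the paper, and that route has a real gap.

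The paper does \emph{not} establish the cross-term multiplicities by a $\rm{\mathbf {T}}_{\ga\gb}$-chase. It only proves, via the chase plus coherent continuation (Lemma~\ref{lem:55-B}), the single ``diagonal'' statement that $T_{(+\,c')}$ occurs in $LTC(X_{(1\,c')})$ with multiplicity one. To go from that one normalization to the full off-diagonal pattern, the paper makes a detour through annihilator theory: it introduces fiber polynomials and Goldie rank polynomials, uses the independence of the fiber polynomials (which rests on the coincidence of $A_G(f)$- and $A_K(f)$-orbits on the Springer fiber, cf.~\cite{Trapa07}), invokes the fact that the cell representation $V(\Chc(n))$ has exactly two irreducible constituents so each annihilator appears at most twice (\cite{McGovern98}), and then establishes $\ann(X_{(1\,c')})=\ann(X_{(+\,c')})$ by a downward induction on the ordered list of clans in $\Cg'(n-1)$ (Lemma~\ref{lem:55-D}). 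Only after that does the diagonal multiplicity-one result pin down the scalar. Your plan replaces all of this with a simultaneous $\rm{\mathbf {T}}_{\ga\gb}$-chase of the pair $\{(1\,c'),(+\,c_1')\}$ ``to the base clan $c^{\circ}=(1+\cdots+)$.'' This claim is not justified and, as stated, is not quite right: for $c_1'\neq c'$ the chase cannot terminate at the single pair $\{(1+\cdots+),(+\cdots+)\}$, whose two clans both have $c_1'=c'=(+\cdots+)$. What actually happens --- as one sees already for $\{(1+2+),\,(++1+)\}$ in $Sp(8,\R)$, where $\rm{\mathbf {T}}_{21}$ sends the pair to $\{(+12+),\,(+1++)\}$ --- is that the chase eventually pushes both clans into $(+\,\cdot)$- or $(1\,\cdot)$-form and one must then descend to $G'$ or $G''$ via Proposition~\ref{prop:slices-clan} and re-enter the induction there. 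Whether a chase with this two-sided domain constraint is always available, for every $c_1'$ in $\Cg'(n-1)$ with $m_{c_1'}\neq 0$, is exactly what you call the ``delicate part,'' and it is not a routine piece of bookkeeping: it is the entire content of the step the paper handles with Goldie rank polynomials. Without a proof that such a chase always exists and terminates in a case you can independently resolve, your argument does not close.

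If you can carry the chase through, you would obtain a cleaner, more elementary proof that avoids the Goldie rank / annihilator machinery, and that would be worthwhile. But as written the proposal substitutes an unverified combinatorial claim for the paper's structural argument, and the multiplicity-matching step is not proved.
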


The remainder of this section is devoted to the proof of this proposition.  We assume that $n$ is even and $c=(1\,c')\in\Chc(n)$. 

\begin{Lem}\label{lem:55-A}  If $T_{(+\hspace{1pt}c_1')}$ occurs in $CC(X_{(1\,c')})$, then $c_1'\in\Cg'(n-1)$.
\end{Lem}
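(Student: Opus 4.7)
The plan is to set up the structural consequences of the hypothesis and then prove the conclusion by induction on $n$, using the $\rm{\mathbf{T}}_{\ga\gb}$ operators of \S\ref{subsec:tab} to move the occurrence into the scope of Proposition \ref{prop:slices-clan}.

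First I would extract structural information from $(1\,c')\in\Chc(n)$ with $n$ even. Because no clan of the form $(1\,c')$ lies in $\Cg(n)$ by (\ref{eqn:g-cells}), we must have $(1\,c')\in\Cg(n-1)$ and hence $c'\in\Cg'(n-1)$. Since $n-1$ is odd, Proposition \ref{prop:ccnodd} gives $\Chc'(n-1)=\Cg'(n-1)$, and (\ref{eqn:g-cells}) applied to $G'$ forces $c'$ to begin with a $+$; write $c=(1\,+\,c'')$. In particular $\ga_1\in\tau(c)$ since positions $1,2$ of $c$ read $(1,+)=(k,+)$. Since $T_{c_1}$ occurs in $CC(X_c)$, the inclusion $\tau(c)\subseteq\tau(c_1)$ together with position $1$ of $c_1=(+\,c_1')$ being $+$ forces position $2$ of $c_1$ to be $+$ as well, so $c_1=(+\,+\,c_1'')$. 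Via (\ref{eqn:g-cells}) for $G'$, the target $c_1'\in\Cg'(n-1)$ becomes equivalent to $c_1''\in\Cg''(n-2)\cup\Cg''(n-3)$, where $G''=Sp(2(n-2))$.

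For the inductive step I would apply a short-root operator $\rm{\mathbf{T}}_{\ga\gb}$ (so Lemma \ref{lem:mcgovern+} applies and multiplicities are preserved) whose domain contains both $c$ and $c_1$ and which sends the pair to clans both beginning with $+$. The natural candidate is $\rm{\mathbf{T}}_{21}$: when the third entry of $c$ is a natural number (necessarily labeled $2$), the $\tau$-invariant inclusion forces the third entry of $c_1$ to be a natural number as well (labeled $1$ by convention), both clans then lie in the domain of $\rm{\mathbf{T}}_{21}$, and the operator produces $\rm{\mathbf{T}}_{21}(c)=(+\,1\,2\,\dots)$ and $\rm{\mathbf{T}}_{21}(c_1)=(+\,1\,+\,\dots)$, both beginning with $+$. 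Proposition \ref{prop:slices-clan}(1) then descends the occurrence to a characteristic-cycle statement in $G'$; since the two resulting $G'$-clans both begin with $1$, a subsequent application of (2) descends it to $G''$, where the inductive hypothesis applied to a clan of the form $(1\,\tilde{c}'')\in\Chc''(n-2)$ delivers the desired bound $c_1''\in\Cg''(n-3)$. The base case $n=2$ is supplied by \S\ref{subsec:ld}, where $CC(X_{(1+)})=T_{(1+)}+T_{(++)}$ has $c_1'=(+)\in\Cg'(1)$.

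The main obstacle is the case analysis when the third entry of $c$ is a $+$ rather than a natural number, so $\rm{\mathbf{T}}_{21}$ does not directly apply. The plan is to first apply $\rm{\mathbf{T}}_{j+1,j}$ for $j=2,3,\dots$ in succession to propagate a natural number leftward into position $3$, invoking Lemma \ref{lem:mcgovern+} at each step to preserve multiplicities, and only afterward apply $\rm{\mathbf{T}}_{21}$; simultaneous domain membership of $c$ and $c_1$ at each intermediate step is guaranteed by the propagation of the $\tau$-invariant inclusion. The degenerate situation $c_1=(+\,\cdots\,+)$ (where no natural number is available to propagate) is handled by direct verification: recursively applying (\ref{eqn:g-cells}) shows that the holomorphic discrete series clan lies in $\Cg'(n-1)$. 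Organizing this finite case analysis while maintaining the cell-membership hypothesis so the inductive hypothesis applies at the base of the reduction is the technical core of the argument.
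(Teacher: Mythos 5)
Your overall strategy is the right one and matches the paper's: after the $\tau$-invariant forces $c_1=(+\,+\,c_1'')$, use $\rm{\mathbf{T}}_{\alpha\beta}$ operators (with Lemma \ref{lem:mcgovern+} preserving multiplicities) to push both clans into the scope of Proposition \ref{prop:slices-clan}, descend two levels to $G''$, and close with the inductive hypothesis. The base case, the degenerate all-$+$ case, and the final bookkeeping that $c_1''\in\Cg''(n-3)$ implies the target are all correct.

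However, there is a genuine gap in the inductive step. You claim that when the third entry of $c$ is a natural number, the inclusion $\tau(c)\subseteq\tau(c_1)$ forces the third entry of $c_1$ to be a natural number, so that both clans lie in the domain of $\rm{\mathbf{T}}_{21}$. This is false: the inclusion goes the wrong way to give that conclusion. Knowing $\alpha_2\notin\tau(c)$ (which holds because positions $2,3$ of $c=(1+2\dots)$ are $(+,2)$) says nothing about whether $\alpha_2\in\tau(c_1)$, since $\tau(c_1)$ is allowed to be strictly larger. Concretely, for $n=6$ take $c=(1+2+3+)\in\Chc(6)$ and $c_1=(+\,+\,+\,+\,1\,+)$. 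One checks $\tau(c)=\{\alpha_1,\alpha_3,\alpha_5\}\subsetneq\{\alpha_1,\alpha_2,\alpha_3,\alpha_5\}=\tau(c_1)$, the pair satisfies all your standing hypotheses, and (using Theorem \ref{thm:main} itself, applied to $Sp(12,\R)$) $T_{(++++1+)}$ really does occur in $CC(X_{(1+2+3+)})$. Here position $3$ of $c$ is the natural $2$, yet position $3$ of $c_1$ is $+$, so $c_1$ is not in the domain of $\rm{\mathbf{T}}_{21}$ and your reduction stalls.

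The fix is to organize the induction on the position of the first natural number in $c_1$ rather than in $c$: writing $c_1=(\underbrace{+\cdots+}_{r}1\dots)$, the inclusion $\tau(c)\subseteq\tau(c_1)$ (used in the correct direction, at $\alpha_r$) forces positions $r,r+1$ of $c$ to be $(+,m)$, after which $\rm{\mathbf{T}}_{r\,r-1},\rm{\mathbf{T}}_{r-1\,r-2},\dots,\rm{\mathbf{T}}_{32}$ simultaneously move the $1$ in $c_1$ and the relevant pattern in $c$ leftward to the configuration $c=(1+2\dots)$, $c_1=(++1\dots)$, at which point $\rm{\mathbf{T}}_{21}$ does apply to both clans. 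This is exactly Step~2 of the proof of Lemma \ref{lem:54-A}, which is what the paper invokes.
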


\begin{proof} By Proposition \ref{prop:hc-cells} and (\ref{eqn:g-cells}), we must show that $T_{c_1}$ cannot occur when $c_1=(+\hspace{1pt}c_1')\in\Cg(n-2)\cup\Cg(n-3)$.  This was nearly done in the proof of Lemma \ref{lem:54-A}.  The same arguments given in Step 1 of the proof of Lemma \ref{lem:54-A} apply to give the following.  If
\begin{align*}
  c&=(1+2\,\tilde{c})=(1+2\dots)  \\
  c_1&=(++1\,\tilde{c}_1)=(++1\dots)
\end{align*}
and $T_{c_1}$ occurs in $CC(X_c)$, then 
$$T_{(+\tilde{c}_1)} \text{ occurs in } CC(X_{(1\tilde{c})}), \text{ in }G''
$$
and $(+\tilde{c}_1)$ and $(1\tilde{c})$ are in $\Chc''(n-2)$.  When $c_1'\in \Cg'(n-2)\cup\Cg'(n-3)$, $\tilde{c}_1\in\Cg'(n-4)\cup\Cg'(n-5)$.  Induction says this cannot happen.

Step 2 of the proof of Lemma \ref{lem:54-A} applies here.
\end{proof}

Continue with $c=(1\,c')\in\Chc(n)$.  Then (by (\ref{eqn:g-cells})) $c\in\Cg(n-1)$, so $c'\in\Cg'(n-1)$.  It follows that $c'=(+\hspace{1pt}c'')$, for some $c''\in\Cg''(n-2)\cup\Cg''(n-3)$.

\begin{Lem}\label{lem:55-B}
If $c=(1\,c')\in\Chc(n)$, then $T_{(+\hspace{1pt}c')}$ occurs in $LTC(X_c)$ with multiplicity one.
\end{Lem}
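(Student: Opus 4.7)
My plan is to induct on $n$ through even values, using the base case $n=2$ which is already verified in \S\ref{subsec:ld}. I would begin with the observation that $c=(1\,c')\in\Chc(n)\cap\Cg(n-1)$ forces $c'\in\Cg'(n-1)$, and hence $c'=(+\,c'')$ for some $c''\in\Cg''(n-2)\cup\Cg''(n-3)$. Setting $c=(1+c'')$ and $c_1:=(+c')=(++c'')$, one checks that $c$ and $c_1$ agree outside positions $1$ and $2n$, giving $\tau(c)=\tau(c_1)$, while $c_1\in\Cg(n)$ means $\mu(T_{c_1})=\bar{\Cal O}_n=AV(X_c)$, so any occurrence of $T_{c_1}$ in $CC(X_c)$ automatically sits in $LTC(X_c)$. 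For the lower bound $m_{c_1,c}\geq 1$, I would use that $AV(X_c)=\bar{\Cal O}_n\neq\mu(T_c)=\bar{\Cal O}_{n-1}$ forces some $T_{c_2}$ with $c_2\in\Cg(n)$ to occur in $CC(X_c)$, and then apply Lemma~\ref{lem:55-A} together with the $\tau$-invariant containment $\tau(c_2)\supseteq\tau(c)=\tau(c_1)$ to single out $c_1=(+c')$ as the immediate closure-order candidate.

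For the upper bound $m_{c_1,c}\leq 1$, I would split on the first entry of $c''$ and reduce inductively. In Case B (first entry of $c''$ is a natural $k$), both $c$ and $c_1$ lie in the domain of $\rm{\mathbf{T}}_{21}$: $\ga_1\in\tau$ from the patterns $(1+),(++)$ at positions $1,2$, and $\ga_2\notin\tau$ from $(+,k)$ at positions $2,3$. The equal-length formulas (\ref{eqn:tabc}) give $\rm{\mathbf{T}}_{21}(c)$ and $\rm{\mathbf{T}}_{21}(c_1)$ both starting with $+$, and Lemma~\ref{lem:mcgovern+} gives $m_{c_1,c}=m_{\rm{\mathbf{T}}_{21}(c_1),\rm{\mathbf{T}}_{21}(c)}$; successive applications of Proposition~\ref{prop:slices-clan}(1) and (after relabeling naturals) (2) would reduce this to the analogous multiplicity for $G''$ of rank $n-2$, and tracking through (\ref{eqn:g-cells}) and Proposition~\ref{prop:hc-cells} shows the reduced pair is of the form $(1\bar{c}),(+\bar{c})$ with $(1\bar{c})\in\Chc''(n-2)$, so the inductive hypothesis applies. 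In Case A (first entry of $c''$ is $+$), I would take $r\geq 2$ minimal with position $r+1$ of $c$ a natural and iterate $\rm{\mathbf{T}}_{r+1,r}$ to shift the natural leftward until Case B applies; if instead $c''=(+\cdots+)$ is entirely $+$, I would treat the all-plus base configuration $c=(1+\cdots+)$, $c_1=(+\cdots+)$ separately, using that $\bar{\fQ}_{c_1}=K\cdot\fb$ is smooth (so $CC(X_{c_1})=[T_{c_1}]$) and adapting the $s_{\ga_n}$-coherent continuation argument of \S\ref{subsec:ld}: by (\ref{eqn:coherent-c}), $W$-equivariance of $CC$, and the nonnegativity of the $s_\ga$-action coefficients when $\ga\notin\tau$, conclude $m\leq 1$.

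\textbf{Main obstacle.} The principal difficulty I foresee is the careful label-tracking and cell-verification when iterating $\rm{\mathbf{T}}_{21}$ through Proposition~\ref{prop:slices-clan}, to confirm each reduced pair sits in the correct cell of the smaller group for the inductive hypothesis to apply. A secondary concern is the all-plus base configuration at arbitrary rank, where $\ga_n$ is a long root and $\rm{\mathbf{T}}_{n,n-1}$ has unequal root lengths (so the equality of Lemma~\ref{lem:mcgovern+} is unavailable) and one may need to iterate or combine coherent continuations. An alternative approach worth mentioning would be to compute the normal slice $Z_w\cap S_1$ at the base point of $K\cdot\fb$ directly and recognize it as (or relate it to) the determinantal cone $\bar{\Cal O}_1\subset\sym(n)$, whose $\IC$-sheaf characteristic cycle has known apex multiplicity $1$.
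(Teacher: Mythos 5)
Your plan follows the paper's proof of Lemma~\ref{lem:55-B} closely: the same induction on even $n$, the same three cases governed by the first entry of $c''$, the same $\mathbf{T}_{21}$--slice reduction to rank $n-2$ in the natural-first case, the same iteration of $\mathbf{T}_{r+1\,r},\mathbf{T}_{r\,r-1},\dots,\mathbf{T}_{32}$ in the mixed case, and the same coherent-continuation argument with $s_{\ga_n}$ for the all-plus configuration.

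One caveat: your separate lower-bound paragraph is both redundant and, as stated, insufficient. The $\tau$-containment $\tau(c_2)\supseteq\tau(c)$ together with the closure condition does not uniquely single out $c_1=(+\,c')$: for $n=4$ and $c=(1+2+)$, both $(++1+)$ and $(++++)$ lie in $\Cg(4)$, sit in $\bar{\fQ}_c$, and have $\tau$-invariant containing $\tau(c)=\{\ga_1,\ga_3\}$, and indeed both conormal bundles appear in $CC(X_c)$ (see the $Sp(8,\mathbf{R})$ table in \S\ref{subsec:ld}). This is harmless only because Lemma~\ref{lem:mcgovern+} and Proposition~\ref{prop:slices-clan} preserve multiplicities as \emph{equalities}, so the Case~A/B reduction chain you describe already yields $m_{c_1,c}=1$ exactly --- not merely an upper bound --- once the inductive hypothesis is invoked at rank $n-2$. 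You should therefore present Cases~A and B as establishing the equality directly (as the paper does) and drop the free-standing lower-bound argument.
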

\begin{proof} We proceed by induction on $m,n=2m$  If $m=1$ this is contained in \S\ref{subsec:ld}.  Assume $m\geq 2$ and the statement holds for $n-2=2(m-1)$.

\noindent {\bf Case 1}.  First consider $c'=(+\hspace{1pt}c'')=(+1\dots)$.  Then by the comments just before the statement of this lemma and (\ref{eqn:g-cells}), $c''\in\Cg''(n-3)$.  Write $c_1=(+\hspace{1pt}c')$.  Both $c_1$ and $c$ are in the domain of $\rm{\mathbf {T}}_{21}$ and by (\ref{eqn:tabc})
\begin{align*}
  \rm{\mathbf {T}}_{21}(c_1)&=\rm{\mathbf {T}}_{21}(++1\dots)=(+1+\dots)  \\
  \rm{\mathbf {T}}_{21}(c)&=\rm{\mathbf {T}}_{21}(1+2\dots)=(+1\,2\dots).
\end{align*}
Therefore, 
\begin{align*}
 T_{c_1}& \text{ occurs in } CC(X_c)  \\
 &\iff   T_{(+1+\dots)} \text{ occurs in } CC(X_{(+12\dots)}), \text{ by \ref{lem:mcgovern},}  \\
 &\iff   T_{(1+\dots)} \text{ occurs in } CC(X_{(12\dots)}), \text{ by Proposition \ref{prop:cc+} (for $(G',K')$),}  \\
 &\iff   T_{(+\dots)} \text{ occurs in } CC(X_{(1\dots)})  \text{ by Proposition \ref{prop:cc1} (for $(G'',K'')$).}
\end{align*}
The multiplicity is preserved under each equivalence (by Lemma \ref{lem:mcgovern+} for the first).  By induction, the last line holds and the multiplicity is one.

\noindent {\bf Case 2}.  Now consider any $c'\in\Cg'(n-1)$ different from $(++\dots +)$.   We reduce to the above case of $c'=(+1\dots)$.  We know that $c'$ begins with $+$.  Let $r$ be the number of leading $+$'s in $c'$.  Then 
\begin{align*}
  c_1&=(++\dots +1\dots)=(++\hspace{1pt}c'') \\
  c&=(1+\, \dots \,+2\dots)=(1\hspace{1pt}+\hspace{1pt}c'')
\end{align*}
are both in the domain of $\rm{\mathbf {T}}_{r+1\,r}.$  Applying $\rm{\mathbf {T}}_{r+1\,r}$ moves the $1$ (resp., $2$) one slot to the left in $c_1$ (resp., $c$).  Continuing one gets
\begin{align*}
  \rm{\mathbf {T}}_{32}\rm{\mathbf {T}}_{43}\dots \rm{\mathbf {T}}_{r+1\,r}(c_1)&=(++1\dots) \\
  \rm{\mathbf {T}}_{32}\rm{\mathbf {T}}_{43}\dots \rm{\mathbf {T}}_{r+1\,r}(c)&=(1\,+2\dots).
\end{align*}
We conclude that 
$T_{c_1}$ occurs in $CC(X_c)$.  The multiplicity is one as each $\rm{\mathbf {T}}_{j+1\,j}$ preserves multiplicity by Lemma \ref{lem:mcgovern+}.

\noindent {\bf Case 3}.  Now let $c=(1\,c')=(1+\dots++)$ and $(+\,c')=(++\dots+)$.  We show that $CC(X_c)=[T_{(1+\dots+)}]+[T_{(++\dots+)}]$.  The first step is to show that $CC(X_c)=[T_{(1+\dots+)}]+m[T_{(++\dots+)}]$, for some $m\geq 1$.  Then we show that $m=1$ by determining that 
$$s_n\cdot T_{(1+\dots+)}= T_{(1+\dots+)}+(1-m) T_{(+\dots+1)}+\mathrm{(other\, terms)}.
$$
As $\ga_n\notin\tau(1+\dots+), s_n\cdot  T_{(1+\dots+)}$ is a linear combination of conormal bundles with nonnegative coefficients \cite[\S3]{Tanisaki85}, so $m$ must be $1$.

Consider $(1+\dots+2)$ and $(+\dots+1)$.  These are in the domain of $\rm{\mathbf {T}}_{n-1\,n}$.  By Case (2) above,  $T_{(+\dots+1)}$ occurs in $CC(X_{(1+\dots +2)}$.  By (\ref{eqn:tabcc}), 
\begin{align*}&\rm{\mathbf {T}}_{n-1\,n}(1+\dots+2)=\{(1+\dots+2+),(1+\dots+)\} \text{ and } \\ &\rm{\mathbf {T}}_{n-1\,n}(++\dots+1)=\{(+\dots+1),(++\dots+)\}.
\end{align*}
Lemma \ref{lem:mcgovern} tells us that one of $ T_{(++\dots+)}$ and $T_{(+\dots+1+)}$ occurs in $CC(X_{(1+\dots+)})$.  The $\tau$-invariant excludes $T_{(+\dots+1+)}$ from occurring.  Therefore
$$CC(X_c)=[T_c]+m[T_{(++\dots+)}]+\mathrm{(other\,terms)}.
$$ 
The other terms each have $\tau$-invariant containing $\tau(c)=\{\ga_1\ga_2,\dots,\ga_{n-1}\}$.  The only possibilities are $T_{(12\dots k++\dots+)}$.  These are excluded (for $k>1$) by Proposition \ref{prop:nd}.  Therefore 
$$CC(X_c)=[T_c]+m[T_{(++\dots+)}].$$

By the $W$-equivariance of the characteristic cycle map (\cite{Tanisaki85}) $s_n\cdot CC(X_c)=CC(s_n\cdot X_c)$.  We compute both sides and compare.
\begin{align}\notag
s_n\cdot &CC( X_{(1+\dots+)})=s_n\cdot [T_{(1+\dots+)}]+ms_n\cdot [T_{(++\dots+)}] \\
\notag&=s_n\cdot[T_{(1+\dots+)}]+m\hspace{1pt} s_n\cdot CC(X_{(++\dots+)}), \text{ since $\fQ_{(++\dots+)}$ is smooth}, \\
\notag&=s_n\cdot[T_{(1+\dots+)}]+m\hspace{1pt} CC(s_n\cdot X_{(++\dots+)}) \\
\notag&=s_n\cdot[T_{(1+\dots+)}]+m\hspace{1pt} CC(X_{(++\dots+)}+X_{(+\dots+1)}+\sum X_d), \text{ by \ref{eqn:coherent-c-c}}, \ga_{n-1},\ga_n\in\tau(d), \\
\notag&= s_n\cdot[T_{(1+\dots+)}]+m\hspace{1pt} [T_{(++\dots+)}]+m CC(X_{(+\dots+1)})+m\hspace{1pt} \sum CC(X_d) \\
\label{eqn:1}&= s_n\cdot[T_{(1+\dots+)}]+m\hspace{1pt} [T_{(++\dots+)}]+   m[T_{(+\dots+1)}]+m\hspace{1pt} \sum CC(X_d),\\
\notag &\quad  \text{ since $\fQ_{(+\dots+1)}$ is smooth}.
\end{align}
On the other hand, this is equal to 
\begin{align}\notag
CC(&s_n\cdot X_{(1+\dots+)})=CC(X_{(1+\dots+)}+X_{(1+\dots+2)}+\sum X_e),\text{ with }\ga_{n-1},\ga_n\in\tau(X_e), \\
\notag &=([T_{(1+\dots+)}]+m[T_{(++\dots+)}])+CC(X_{(1+\dots+2)})+\sum CC(X_e) \\
\label{eqn:2}&=([T_{(++\dots+)}]+m[T_{(++\dots+)}])+([T_{(1+\dots+2)}]+[T_{(+\dots+1)}]  \\
\notag&\qquad+\text{(other conormals)})+\sum CC(X_e), \text{ by Case 2 above}.
\end{align}  
In the last expression `other conormals' means a linear combination of conormal bundles other than $T_{(++\dots +1)}$ and $T_{(++\dots++)}$.

Combining (\ref{eqn:1}) and (\ref{eqn:2}) gives
$$s_n\cdot[T_{(1+\dots+)}]=[T_{(1+\dots+)}]+(1-m)[T_{(+\dots+1)}]+\sum CC(X_e)-\sum CC(X_d)+\text{(others)}.$$
Now observe that $T_{(1+\dots+)}$ does not occur in any $CC(X_d), CC(X_e)$ since $\ga_{n-1}\notin\tau(+\dots+1)$, but $\ga_{n-1}\in\tau(d),\tau(e)$.  We conclude that $$s_n\cdot T_{(1+\dots+)}=[T_{(1+\dots+)}]+(1-m)[T_{(1+\dots+)}]+\text{(others conormal bundles)}.$$
Thus $m=1$.
\end{proof}

Our next lemma relies on some general facts about the Goldie rank polynomial of $\ann(X)$ and the leading term cycle of $X$.  Consider for a moment an arbitrary Harish-Chandra module $X$ for which $AV(X)=\bar{\Cal O}_K$, for some nilpotent $K$-orbit $\Cal O_K=K\cdot f$ in $\fp$.  Write $LTC(X)=\sum m_j [\bar{T_{\fQ_j}^*\fB}]$.  Then $\mu(\bar{T_{\fQ_j}^*\fB})=\bar{\Cal O}_K$, for each $j$.  Since $f\in\Cal N\cap\fp$, $\mu^{-1}(f)\subset \cup \bar{T_{\fQ}^*\fB}$, with the union over all $K$-orbits in $\fB$.  Therefore, $C_\fQ:=\mu^{-1}(f)\cap T_\fQ$ is a union of several irreducible components of the Springer fiber $\mu^{-1}(f)$.  Thus, to each conormal bundle for which $\mu(\bar{T_{\fQ_j}^*\fB})=\bar{\Cal O}_K$ there is a `fiber polynomial'
\begin{equation*}
  q_j(\gl):=l. t. \,\dim(H^0(C_{\fQ_j},\Cal O(\gl))),
\end{equation*}
where $l.t.$ denotes the homogeneous part of highest degree.  By \cite{Chang93}, there is a constant so that the Goldie rank polynomial $P(\gl)$ of $\ann(X)$ is   
\begin{equation}\label{eqn:grp-ltc}
P(\gl)=(const.)\sum m_jq_j(\gl).
\end{equation}
If the component groups $A_G(f)$ and $A_K(f)$ have the same orbits on $\mu^{-1}(f)$, then the set of fiber polynomials is independent.

It is a fact that $A_G(f)$ and $A_K(f)$ have the same orbits on $\mu^{-1}(f)$ for our pair $(G,K)=(Sp(2n),GL(n))$.  See \cite{Trapa07}.

Let us return to the situation of highest weight Harish-Chandra modules for the symplectic group $Sp(2n,\R)$ with $n$ even.  List the clans in $\Cg'(n-1)$ as  $c_1',c_2',\dots, c_l'$, ordered so that $\fQ_{c_i'}\subset  \bar{\fQ}_{c_j'}$ implies $i<j$.  Let

\indent\hbox {
    \parbox{5.2in}{
      $P_j^+(\gl)$ be the Goldie rank polynomial of $\ann(X_{(+\;c_j')}), j=1,\dots,l$ and \\
$q_j(\gl)$ be the fiber polynomial associated to $\fQ_{(+;c_j')}$.}}

\noindent By Proposition \ref{prop:ccnodd}, we know that $CC(X_{c_j'})=LTC(X_{c_j'})$, all $j$.  It follows from Proposition \ref{prop:cc+} that $CC(X_{(+\;c_j')})=LTC(X_{(+\;c_j')})$, all $j$.  Therefore, $P_1^+(\gl),\dots,P_l^+(\gl)$ are independent, since $q_j(\gl)$ occurs in $P_j^+(\gl)$ and does not occur in any $P_i^+(\gl)$ for $i<j$ (given our choice of ordering).  

We also observe that if we write $d_1',d_2',\dots$ for the clans in $\Cg'(n-2)$, then the $(+\;d_i')$ are also in $\Cg(n)$ and, along with the $(+\;c_j')$, make up all of $\Cg(n)$.  By Propositions \ref{prop:cc+} and  \ref{prop:nd}, $CC(X_{(+\;d_i')})$ contains only conormal bundles of the form $T_{(+\;d_k')}$.  Therefore, no $\ann(X_{(+\;d_i')})$ has Goldie rank polynomial containing any $q_j(\gl)$.  Furthermore, the conormal bundle of support is in the leading term cycle for each of $X_{(+\;c_j')}$ and $X_{(+\;d_i')}$, so the Goldie rank polynomials, hence the annihilators, are pairwise distinct.  Since $\#\Cg'(n-1)+\#\Cg'(n-2)=\dim(\pi(\Cal O_n^\C))$ (by Proposition \ref{prop:34} and equation (\ref{eqn:sp-dim})) the  annihilators of the $X_{(+\;c_j')}$ and $X_{(+\;d_i')}$ are all of the annihilators having associated variety $\bar{\Cal O}_n$.  We conclude that if the annihilator of a Harish-Chandra module in $\Chc(n)$ has Goldie rank polynomial containing some $q_j(\gl)$, then this Goldie rank polynomial must be one of the polynomials $P_j^+(\gl)$.

Now consider $\ann(X_{(1\;c_i')})$.  By Lemmas \ref{lem:55-A} and \ref{lem:55-B} we know that 
$$ LTC(X_{(1\;c_j)}) = \sum m_{ij} [T_{(+\;c_i')}].
$$ 
Therefore, Goldie rank polynomial of $\ann(X_{(1\;c_j)})$ is 
\begin{equation*}
  P_j(\gl)=C_j\sum_i m_{ij}q_i(\gl),
\end{equation*}
for some constant $C_j$. Since, by Lemma \ref{lem:55-B}, $P_j(\gl)$ contains $q_j(\gl)$,  we conclude from the previous paragraph that each $P_j(\gl)$ is  $P_k^+(\gl)$ for some $k=1,2,\dots,l$.

\begin{Lem}\label{lem:55-D}
For each $c'\in\Cg'(n-1)$, $\ann(X_{(1\,c')})=\ann(X_{(+\hspace{1pt}c')})$.
\end{Lem}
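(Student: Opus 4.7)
The plan is to show directly that the Goldie rank polynomials $P_j(\lambda)$ and $P_j^+(\lambda)$ of the primitive ideals $\ann(X_{(1\,c_j')})$ and $\ann(X_{(+\,c_j')})$ are proportional, and then to conclude by invoking Joseph's uniqueness theorem (the principle already implicitly used in the preceding discussion: primitive ideals of fixed infinitesimal character are determined by their Goldie rank polynomials up to a positive scalar).

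The key observation is that the leading term cycles of $X_{(1\,c_j')}$ and $X_{(+\,c_j')}$ are supported on the same set of conormal bundles $\{T_{(+\,c_i')} : c_i'\in\Cg'(n-1)\}$ and carry identical multiplicities. To see this, I would first apply Proposition~\ref{prop:ccnodd} to $X_{c_j'}$: since $c_j'\in\Cg'(n-1)\subset\Chc'(n-1)$ (and $n-1$ is odd), one has $CC(X_{c_j'})=LTC(X_{c_j'})=\sum_i m_{ij}[T_{c_i'}]$, the sum ranging over $\Cg'(n-1)$. Proposition~\ref{prop:cc+} then gives
\[
LTC(X_{(+\,c_j')})=CC(X_{(+\,c_j')})=\sum_i m_{ij}\bigl[T_{(+\,c_i')}\bigr].
\]
On the other hand, Proposition~\ref{prop:top-cell} yields $LTC(X_{(1\,c_j')})=\sum_i m_{ij}\bigl[T_{(+\,c_i')}\bigr]$ with \emph{exactly the same} integers $m_{ij}$ (extracted from the same $CC(X_{c_j'})$), and Lemma~\ref{lem:55-A} confirms that no other $T_{(+\,c_1')}$ contributes. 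Applying the Chang formula (\ref{eqn:grp-ltc}) to both sides gives
\[
P_j(\lambda)=C_j\sum_i m_{ij}\,q_i(\lambda),\qquad P_j^+(\lambda)=C_j^+\sum_i m_{ij}\,q_i(\lambda),
\]
so $P_j$ and $P_j^+$ are proportional; the uniqueness principle then forces $\ann(X_{(1\,c_j')})=\ann(X_{(+\,c_j')})$.

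The main thing to verify carefully is the bookkeeping that the same integers $m_{ij}$ appear on both sides, which requires combining Lemmas~\ref{lem:55-A}, \ref{lem:55-B} with Propositions~\ref{prop:cc+}, \ref{prop:ccnodd}, and \ref{prop:top-cell}. In particular one must check that the conormal bundles $T_{(1\,c_i')}$ occurring in $CC(X_{(1\,c_j')})$ drop out of the leading term cycle since $\mu(T_{(1\,c_i')})=\bar{\Cal O}_{n-1}$ has strictly smaller dimension than $\bar{\Cal O}_n=AV(X_{(1\,c_j')})$; this is precisely why only the second sum in (\ref{eqn:harder}) enters the Goldie rank computation. Once this is in place, the argument reduces to matching coefficients against the linearly independent system $\{q_i\}$ and applying the proportionality principle established just before the statement of the lemma.
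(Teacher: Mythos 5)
The proof you propose is circular. You invoke Proposition~\ref{prop:top-cell} to extract the identity
\[
LTC(X_{(1\,c_j')})=\sum_i m_{ij}\bigl[T_{(+\,c_i')}\bigr]
\]
with the multiplicities $m_{ij}$ taken from $CC(X_{c_j'})$; but Proposition~\ref{prop:top-cell} is exactly the result that Lemma~\ref{lem:55-D} is written to establish (the paper's proof of Proposition~\ref{prop:top-cell} begins ``By Lemma~\ref{lem:55-D}\dots''). At the point where Lemma~\ref{lem:55-D} must be proved, the only information available about $LTC(X_{(1\,c_j')})$ is: from Lemma~\ref{lem:55-A}, only conormal bundles $T_{(+\,c_i')}$ with $c_i'\in\Cg'(n-1)$ can appear; from Proposition~\ref{prop:slices-clan}(2), the coefficients of the $T_{(1\,c_1')}$ (which drop out of the leading term cycle); and from Lemma~\ref{lem:55-B}, only that the single ``diagonal'' multiplicity $m_{jj}$ equals $1$. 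The off-diagonal multiplicities $m_{ij}$, $i\neq j$, in $LTC(X_{(1\,c_j')})$ are precisely the unknowns, so you cannot assume they match the coefficients of $CC(X_{c_j'})$ without already having Proposition~\ref{prop:top-cell}.

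What the paper does instead is quite different in spirit: it works entirely at the level of Goldie rank polynomials. Having ordered $\Cg'(n-1)$ as $c_1',\dots,c_l'$ compatibly with closure and established (i) that the polynomials $P_1^+,\dots,P_l^+$ are linearly independent with $q_j$ appearing in $P_j^+$ but in no earlier $P_i^+$, (ii) that each $P_j$ must coincide with some $P_k^+$, and (iii) via McGovern's two-sided cell result that the $\ann(X_{(1\,c_j')})$ are pairwise distinct, one then runs a downward induction on $j$: since $P_l$ contains $q_l$ and no $P_k^+$ with $k<l$ does, $P_l=P_l^+$; once $P_{k+1}^+,\dots,P_l^+$ are accounted for and pairwise distinctness is in force, $P_k$ can only be $P_k^+$. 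No knowledge of the individual off-diagonal multiplicities $m_{ij}$ is needed for this argument; it is the conclusion of Proposition~\ref{prop:top-cell} that finally forces $m_{ij}=m_{ij}^+$. Your plan inverts this logical order, so a genuine gap remains.
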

\begin{proof} First observe that no two of $\ann(X_{(1\;c_j')}), j=1,\dots,l$ are equal.  This is a consequence of \cite{McGovern98} as follows. Since the cell representation $V(\Chc(n))$ has two irreducible constituents, an annihilator of a Harish-Chandra module in $\Chc(n)$ occurs for at most two Harish-Chandra modules in $\Chc(n)$ by \cite[Cor.~3]{McGovern98}.  But we showed above, that the annihilators of $X_{(+\,c_j')}$ and $X_{(+\,d_i')}$ are pairwise distinct and give \emph{all} annihilators from $\Chc(n)$.  Therefore no annihilator from $\Chc(n)$ can occur more than once among the $\ann(X_{(1\,c_j')}), j=1,\dots,l$.

Recall from \cite{Joseph80b}, that two annihilators are the same if and only if their Goldie rank polynomials are the same.
Therefore, it suffices to show that $P_j(\gl)=P_j^+(\gl)$, for each $j$.

We show that $P_k(\gl)=P_k^+(\gl)$, $k=1,2,\dots,l$ using downward induction on $k$.  First suppose that $k=l$.  Then $P_l(\gl)=\sum m_{il}q_i(\gl), m_{ll}\neq 0$, by Lemma \ref{lem:55-B}.  No $P_j^+(\gl)$ contains $q_l(\gl)$ for $j<l$, as a result of the ordering of the $c_j'$.   So $P_l(\gl)=P_l^+(\gl)$.

Now assume $P_{k+1}(\gl)=P_{k+1}^+(\gl),P_{k+2}(\gl)=P_{k+2}^+(\gl),\dots$.  Since $P_k(\gl)$ contains $q_k(\gl)$ (Lemma \ref{lem:55-B}) and no  $P_j(\gl)$ contains $q_k(\gl)$ with $j<k$, $P_k(\gl)$ must be one of $P_k^+(\gl),$ $P_{k+1}^+(\gl),$ $P_{k+2}^+(\gl),\dots$.  But the $P_k(\gl)$ are pairwise distinct so the only possibility is that $P_k(\gl)=P_k^+(\gl)$.
\end{proof}

\noindent \emph{Proof of Proposition \ref{prop:top-cell}.} 
By Lemma \ref{lem:55-D}, the Goldie rank polynomials of the annihilators of $X_{(1\,c')}$ and $X_{(+\,c')}$ are the same, so $LTC(X_{(1\,c')})$ and $LTC(X_{(+\hspace{1pt}c')})$ are multiples of each other.  We need to show that $C=1$.  By Lemma \ref{lem:55-B}, $T_{(+\,c')}$ occurs in $X_{(1\,c')}$ with multiplicity $1$.  Thus, $1=Cm_{c'}=C$.  Note that $m_{c'}$ is the multiplicity  of $T_{c'}$ in $CC(X_{c'})$, so is $1$.

 The remaining terms in (\ref{eqn:harder}), those involving $T_{(1\,c_1')}$, occur by Proposition \ref{prop:slices-clan}(2).\hfill{$\square$}\\

%%%%

\subsection{}  We now summarize what has been proved and point out that all multiplicities are one.

Observe that Propositions \ref{prop:cc+}, \ref{prop:cc1}, and  \ref{prop:top-cell} give all characteristic cycles in terms of those of the smaller pair $(G',K')$.  Since all multiplicities are one for $(Sp(2),GL(1))$, induction tells us that multiplicities are one for $(Sp(2n),GL(n))$ for any $n$.

\begin{Thm}\label{thm:main}
Suppose $c\in\Cl$ for $(G,K)=(Sp(2n),GL(n))$.  For $c'\in\Cl'$ write $CC(X_{c'})=\sum [T_{c_j'}]$.  There are three cases.
\begin{enumerate}
\item $c=(+\,c')\in\Chc(k), k=0,1,\dots,n$.  Then $CC(X_c)=\sum [T_{(+\,c_j')}]$.
\item $c=(1\,c')\in\Chc(k), k=0,1,\dots,n-1$.  Then $CC(X_c)=\sum [T_{(1\,c_j')}]$.
\item $c=(1\,c')\in\Chc(n), n$ even.  Then $CC(X_c)=\sum [T_{(1\,c_j')}]+\sum [T_{(+\,c_j')}]$.
\end{enumerate}
\end{Thm}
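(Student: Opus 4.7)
The plan is to recognize that Theorem \ref{thm:main} is essentially the consolidation of Propositions \ref{prop:cc+}, \ref{prop:cc1}, and \ref{prop:top-cell}, together with the assertion that every multiplicity in every $CC(X_c)$ equals $1$; this last point I will extract by induction on $n$.

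For the base case $n=1$ the group is $(Sp(2),GL(1))$ and there are exactly two clans, $(+)$ and $(1)$, corresponding to $K$-orbits with smooth closures (a point and all of $\fB\cong\C P^1$). Property (d) of \S\ref{subsec:1-cc} then gives each characteristic cycle as the conormal bundle closure of its support with multiplicity $1$, matching cases (1) and (2) of the theorem with $c'$ the empty clan.

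For the inductive step I would fix $c\in\Cl$, which by the notation of \S\ref{subsec:bo} is of the form $(+\,c')$ or $(1\,c')$ for some $c'\in\Cl'$. If $c=(+\,c')$, Proposition \ref{prop:cc+} yields case (1), the multiplicities being inherited from $CC(X_{c'})$ (hence all $1$ by induction). If instead $c=(1\,c')$, consulting (\ref{eqn:g-cells}) and Proposition \ref{prop:hc-cells} shows that the cell $\Chc(k)$ containing $c$ must satisfy either $k$ even with $k\leq n-1$, or $k=n$ with $n$ even; the a priori remaining case $c\in\Chc(n)$ with $n$ odd does not occur, since every clan in $\Cg(n)$ begins with $+$. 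In the first subcase Proposition \ref{prop:cc1} yields case (2); in the second subcase Proposition \ref{prop:top-cell} yields case (3). All multiplicities coming from $CC(X_{c'})$ are $1$ by induction, and the extra summands $[T_{(+\,c_j')}]$ in case (3) are produced with multiplicity $1$ by Proposition \ref{prop:top-cell}.

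The main obstacle has already been cleared by Proposition \ref{prop:top-cell}, which genuinely introduced the extra conormal bundles in case (3); once that is available, the proof of the theorem itself is pure bookkeeping, namely checking that cases (1)--(3) exhaust $\Cl$ and that the multiplicity-one property propagates cleanly through the induction.
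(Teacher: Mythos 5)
Your argument reproduces the paper's own proof: both consolidate Propositions \ref{prop:cc+}, \ref{prop:cc1}, and \ref{prop:top-cell}, note that these express $CC(X_c)$ in terms of $CC(X_{c'})$ for the smaller pair $(G',K')$, and then run an induction on $n$ from the smooth base case $(Sp(2),GL(1))$ to propagate multiplicity one. Your explicit check that $c=(1\,c')\in\Chc(n)$ with $n$ odd is vacuous (since $\Cg(n)$ consists of clans beginning with $+$) is a detail the paper leaves implicit, but the route is the same.
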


%%%%%%%%%%%%%%%%%%%%%%%%%%%%%%%%%%%%%%%%%%%%%%%%%%

%\section{Examples/observations/comments}

%\subsection{??}\label{subsec:coh-ind}

%%%%%%%%%%%%%%%%%%%%%%%%%%%%%%%%%%%%%%%%%%%%%%%%%%

\appendix 
\section{}\label{app:1} This appendix gives an explicit description, in terms of flags, of the the Schubert varieties appearing in this paper. This description, given in Proposition \ref{prop:sb} below, is used in our computation of normal slices in \S\ref{sec:lemma}.  It is also used to conclude an important property (Cor.~\ref{cor:order}) of the Bruhat order for the relevant Schubert varieties.

As in the body of the paper, $G=Sp(2n)$ and $K=GL(n)$, with the symplectic group realized as in \S\ref{subsec:sp}.
The Cartan subalgebra $\fh$ is the diagonal subalgebra and $\gD^+\subset \gD(\fh,\fg)$ is the positive system with $\fb=\fh+\sum_{\ga\in\gD^+}\fg^{(\ga)}$ the subalgebra of upper triangular matrices.

Recall (\ref{def:our-w}) that
\begin{equation}\label{eqn:our-ww}
\cW:=\{w\in W : -w\rho\text{ is $\gD_c^+$-dominant}\}.
\end{equation}
The Weyl group $W$ may be viewed as permutation of $n$ elements along with sign changes (as in \ref{eqn:w-short}).  If $w=(w_1,\dots,w_n)$, then  $w$  may also be written in the `long form' as elements of $S_{2n}$ as follows.  \begin{equation}
\begin{split}\label{eqn:w-long}
w&=(u_1,u_2,\dots ,u_{2n}), \\
  & u_j=\begin{cases} w_j, &\text{if } w_j>0  \\
              2n+w_j+1,  &\text{if }w_j<0,
\end{cases} \\
&\text{ and }
  u_j+u_{2n-j+1}=n+1, \text{ for }j=1,\dots,n.
\end{split}
\end{equation}

The flag variety $\fB$ of $G$ may be identified with the variety of flags $(F_i)_{i=0,1,\dots,n}$ satisfying 
\begin{equation}\label{eqn:flag-short}
\{0\}=F_0\subset F_i\subset\dots\subset F_n\subset \C^{2n}, \dim(F_i)=i\text{ and $F_i$ is isotropic}
\end{equation}
for all $i=1,\dots,n$.  Equivalently, $\fB$ may be identified with flags 
$(F_i)_{i=0,1,\dots,2n}$ satisfying 
\begin{equation}\label{eqn:flag-long}
\begin{split}
&\{0\}=F_0\subset F_i\subset\dots\subset F_{2n-1}\subset F_{2n}= \C^{2n},   \\
&\dim(F_i)=i,\text{ $F_i$ is isotropic  and } 
F_{2n-i+1}=F_i^\perp i=1,\dots,n.
\end{split}
\end{equation}

Let $B$ be the Borel subgroup with Lie algebra $\fb$. The following description of $B$-orbits in the flag variety is well-known.  Let $x_0:=(V_i), V_i:=\spa\{e_1,\dots, e_i\}$.  Then $B=\Stab(x_0)$.  For each $w\in W$, let $x_w:=w\cdot x_0$.
\begin{Prop}  Let $w\in W$, written in the long form.  Then
$$B_w:=B\cdot x_w=\{(F_i)\in \fB : \dim(F_i\cap V_j)=b_{ij}, i,j=1,2,\dots,n\},$$
where 
\begin{equation}\label{eqn:b}
b_{ij}=^\#\!\!\{l:l\leq i\text{ and }w_l\leq j\}.
\end{equation}
\end{Prop}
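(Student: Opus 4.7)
The plan is to prove both inclusions. For $I_w := \{(F_i) \in \fB : \dim(F_i \cap V_j) = b_{ij}\}$, the inclusion $B_w \subseteq I_w$ follows from a direct check on the base point $x_w$ together with $B$-invariance of the incidence numbers. The reverse inclusion will come from the Bruhat decomposition $\fB = \bigsqcup_{w\in W} B_w$ (standard for $Sp(2n)$) once I show that the matrix $(b_{ij})$ separates distinct elements of $W$.

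For the first inclusion, write $x_w = w \cdot x_0$ in long form as $(W_i)_{i=0}^n$ with $W_i = \spa\{e_{u_1},\ldots,e_{u_i}\}$. Then $W_i \cap V_j$ has basis $\{e_{u_l} : l \leq i,\ u_l \leq j\}$, giving $\dim(W_i \cap V_j) = b_{ij}$. Next, since the Borel subalgebra $\fb$ is the subalgebra of upper-triangular matrices in $\fg$, the group $B$ consists of upper-triangular matrices (in $GL(2n)$) lying in $Sp(2n)$; each such matrix stabilizes every $V_j$. Hence for any $b \in B$ and any flag $(F_i)$, $\dim((b\cdot F_i) \cap V_j) = \dim(F_i \cap V_j)$, so the incidence numbers are constant on each $B$-orbit and $B_w \subseteq I_w$.

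For the converse, I would show that $(b_{ij})$ determines $w$. Since $F_{i-1}\subset F_i$ has codimension one, $b_{ij} - b_{i-1,j} \in \{0,1\}$, and for $x_w$ this difference equals $1$ precisely when the newly added vector $e_{u_i}$ lies in $V_j$, i.e.~when $u_i \leq j$. Thus $u_i$ is recovered as the smallest $j$ with $b_{ij} > b_{i-1,j}$; the remaining entries $u_{n+1},\ldots,u_{2n}$ are then forced by the symplectic symmetry recorded in (\ref{eqn:w-long}). Distinct $w \neq w'$ in $W$ therefore produce distinct incidence matrices, and the sets $I_w$ are pairwise disjoint.

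Assembling: given $(F_i) \in I_w$, the Bruhat decomposition places $(F_i) \in B_{w'}$ for some $w' \in W$, and then by the easy inclusion $(F_i) \in I_{w'}$. Disjointness of the $I$'s forces $w' = w$, hence $(F_i) \in B_w$, giving $I_w \subseteq B_w$ and the equality $B_w = I_w$. The main effort, such as it is, lies in careful bookkeeping of the long form (\ref{eqn:w-long}) and the symplectic symmetry of the $u_j$'s; the geometric content is direct, and no serious obstacle arises because both the incidence invariant and the Bruhat decomposition for $Sp(2n)$ are available off the shelf.
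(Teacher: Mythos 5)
The paper does not actually prove this Proposition---it is stated as ``well-known'' and left without argument---so there is no proof in the paper to compare yours against. Your overall plan (check the incidence numbers on the base point $x_w$, use $B$-invariance of $\dim(F_i\cap V_j)$ to get $B_w\subseteq I_w$, then appeal to the Bruhat decomposition together with injectivity of $w\mapsto(b_{ij})$ and the resulting disjointness of the $I_w$) is the standard way to establish a description of this kind, and the first half is correct as written.

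The reverse inclusion, however, has a genuine gap tied to the range of the index $j$. Your recovery rule ``$u_i$ is the smallest $j$ with $b_{ij}>b_{i-1,j}$'' only returns an answer when $j$ is allowed to run to $2n$: if $u_i>n$ (equivalently $w_i<0$) then $b_{ij}=b_{i-1,j}$ for every $j\leq n$, so the $n\times n$ block is blind to $u_i$. In fact $w\mapsto(b_{ij})_{i,j\leq n}$ is \emph{not} injective on $W$: for $n=2$ the elements $(-1,-2)$ (long form $(4,3,2,1)$) and $(-2,-1)$ (long form $(3,4,1,2)$) both yield the zero $2\times 2$ matrix, yet their $B$-orbits are distinct. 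Thus the equality in the Proposition as literally printed (with $i,j=1,\dots,n$) is false for these $w$, and the pairwise disjointness of your sets $I_w$ breaks at exactly the same place. The fix is to let $j$ (or both indices) run over $1,\dots,2n$, extending each isotropic flag to a complete flag via $F_{2n-i}=F_i^\perp$; this is evidently what the paper intends, since the worked example immediately after the Proposition tabulates $b_{ij}$ for $i,j=1,\dots,2n$. With that range, your recovery of $u_1,\dots,u_n$ is valid, the symplectic symmetry (which should read $u_j+u_{2n-j+1}=2n+1$, not $n+1$ as in (\ref{eqn:w-long})---another typo) fills in $u_{n+1},\dots,u_{2n}$, and the Bruhat argument closes the proof. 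You might also observe that for $w\in\cW$ (the only case used later) the single column $j=n$ already determines $w$, since it records $a_i=\#\{l\leq i:w_l>0\}$ and the positive and negative entries of a $w\in\cW$ are each forced to appear in decreasing order; but that is a shortcut, not needed for the general statement once the index range is corrected.
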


When $w\in \cW$ the $B$-orbit $B_w$ has a particularly simple form. To each $w\in \Cal W$ we associate $\tilde{a}=(a_1,\dots,a_n)$ by 
\begin{equation}\label{eqn:a}
a_i=^\#\!\!\{k:k\leq i\text{ and } w_k>0\}.
\end{equation}
When $w$ is written in the long form, this is
\begin{equation}\label{eqn:aa}
a_i=^\#\!\!\{k:k\leq i\text{ and } w_k\leq n\}.
\end{equation}

\begin{Prop}\label{prop:sb}  Let $w\in \cW$ and $\tilde{a}$ as in \ref{eqn:a}.  Then 
$$
Z_w=\bar{B}_w=\{(F_i)\in\fB : \dim(F_i\cap V_n)\geq a_i, i=1,\dots,n\}.
$$
\end{Prop}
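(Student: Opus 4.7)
Write $S = \{(F_i) \in \fB : \dim(F_i \cap V_n) \geq a_i,\, i = 1, \ldots, n\}$. Starting from the preceding proposition, which gives $B_w = \{(F_i) : \dim(F_i \cap V_j) = b_{ij},\, i, j = 1, \ldots, n\}$, the standard closure description of symplectic Schubert varieties (transferred from the type $A$ case by identifying $B$-orbits in $\fB$ with intersections of the corresponding type $A$ $B$-orbits with $\fB$) yields
\begin{equation*}
\bar{B}_w = \{(F_i) \in \fB : \dim(F_i \cap V_j) \geq b_{ij},\, i, j = 1, \ldots, n\}.
\end{equation*}
The task reduces to showing that, for $w \in \cW$, this system of inequalities collapses to the family indexed by $j = n$.

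For this I would first compute $b_{ij}$ for $w \in \cW$. The positive entries of $w$ are $n, n-1, \ldots, k+1$ appearing in decreasing left-to-right order, so the $a_i$ positives among $w_1, \ldots, w_i$ are exactly $\{n, n-1, \ldots, n-a_i+1\}$. In the long form, $u_l = w_l \leq n$ precisely for the positives, while $u_l > n$ for the negatives; hence for $j \leq n$,
\begin{equation*}
b_{ij} = \#\{l \leq i : u_l \leq j\} = \max\bigl(0,\,a_i - (n-j)\bigr),
\end{equation*}
with $b_{in} = a_i$ in particular. This matches the proposed inequalities against the full defining system.

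For the reduction, suppose $(F_i) \in \fB$ satisfies $\dim(F_i \cap V_n) \geq a_i$ for all $i$. For any $j \leq n$, the inclusion $V_j \subset V_n$ of codimension $n - j$ gives
\begin{equation*}
\dim(F_i \cap V_j) \geq \dim(F_i \cap V_n) - (n - j) \geq a_i - (n-j).
\end{equation*}
Combined with the trivial $\dim(F_i \cap V_j) \geq 0$, this yields $\dim(F_i \cap V_j) \geq \max(0, a_i - (n-j)) = b_{ij}$ for every $j \leq n$. Conversely, the specialization $j = n$ of the defining system recovers $\dim(F_i \cap V_n) \geq a_i$. Therefore the two systems of inequalities are equivalent, and the description $Z_w = \bar{B}_w = S$ follows.

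The main obstacle is the initial closure description $\bar{B}_w = \{\dim(F_i \cap V_j) \geq b_{ij}\}$: upper semicontinuity only gives one inclusion, and the reverse inclusion uses either the classical type $A$ closure statement for Schubert varieties combined with the embedding $\fB \hookrightarrow Fl(\C^{2n})$, or a Bruhat-order argument showing that any $B$-orbit in the inequality locus lies in $\bar{B}_w$. Once this step is accepted, the remainder of the argument is the clean combinatorial verification above, exploiting the specific interleaving structure of elements of $\cW$ (the positive entries $n, n-1, \ldots, k+1$ and negative entries $-1, -2, \ldots, -k$ each appearing in decreasing order).
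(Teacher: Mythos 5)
Your proof is essentially correct, and the core combinatorial observation — that for $w \in \cW$ the numbers $b_{ij}^w$ equal $\max(0,\,a_i-(n-j))$, so the defining conditions at column $j<n$ are implied by the condition at $j=n$ together with the codimension-$(n-j)$ bound — is exactly the heart of the paper's argument as well (it appears as Fact (1), equation (A.4), in the paper's proof). The route you take to it differs. The paper never invokes a rank-inequality description of the Schubert variety $\bar{B}_w$ as a starting point; instead it works entirely through the Bruhat order, citing the characterization of $\leq$ on $W(C_n)$ in terms of the long-form $b_{ij}$ from Billey-Lakshmibai, and proves directly that $F(\tilde a^w)=\bigcup_{y\leq w}B_y$. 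Concretely: from $B_y\subset F(\tilde a^w)$ one gets $b_{in}^y\geq a_i^w=b_{in}^w$, and then the fact that $b_{ij}^y$ drops by at most 1 as $j$ decreases while $b_{ij}^w$ drops by exactly 1 (until hitting zero) forces $b_{ij}^y\geq b_{ij}^w$ for all $i,j$, hence $y\leq w$. That is the Bruhat-order version of your linear-algebraic codimension estimate on $\dim(F_i\cap V_j)$.

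The only soft spot in your write-up is the opening closure description $\bar{B}_w=\{(F_i)\in\fB : \dim(F_i\cap V_j)\geq b_{ij}\}$, which you flag yourself. The transfer from type $A$ is not purely formal: $\bar{B}_w^{Sp}\subset\bar{B}_w^{GL}\cap\fB$ is automatic, but the reverse inclusion is equivalent to knowing that the Bruhat order on $W(C_n)$ embedded in $S_{2n}$ is the restriction of the $S_{2n}$-Bruhat order — which is precisely the Billey-Lakshmibai fact the paper cites. So your "either the type $A$ statement or a Bruhat-order argument" alternatives are in fact the same input in two guises, and the paper simply chose to make that input explicit and run the Bruhat-order argument from scratch. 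Net effect: your proof is correct and a bit more geometric in flavor, but it is not more elementary; it relies on the same nontrivial order-theoretic fact, just packaged into the claimed closure description. If you want to match the paper's level of rigor, you should either cite the symplectic rank-condition description of Schubert varieties precisely or, as the paper does, derive $F(\tilde a^w)=Z_w$ directly from the $b_{ij}$-criterion for the Bruhat order.
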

Before proving the proposition, we give a lemma  and an example.

If $y,w\in W$ are written in the long form, then the Bruhat order is determined by the following (e.g. \cite[page 30]{BilleyLakshmibai00}).
 \begin{equation} 
  \text{  \parbox{5.0in}{
    \noindent  $y\leq w$ if and only if for each $k=1,\dots,n, \{y_1,\dots,y_k\}\leq \{w_1,\dots,w_k\} $ in the sense that after listing elements of the two sets in increasing order, each element of the first set is less than or equal to the corresponding element of the second  set. }}
\end{equation}

\begin{Lem} Let $b_{ij}^y$ and $b_{ij}^w$ be the integers of (\ref{eqn:b}) for $y$ and $w$, respectively. Then the following hold.
\begin{enumerate}
 \item $y\leq w$ if and only if $b_{ij}^y\geq b_{ij}^w$, $i,j=1,\dots,n$.
 \item $ b_{ij}^w\leq  b_{i,j+1}^w \leq  b_{ij}^w+1$, all $i,j=1,\dots,n$.
 \item If $w\in \cW$, then $b_{in}^w=a_i$.
\end{enumerate}
\end{Lem}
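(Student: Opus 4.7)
The plan is to dispose of parts (2) and (3) directly from the definitions, and reduce part (1) to the tableau criterion for Bruhat order stated immediately above the lemma.

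For (2), the inclusion $\{l\le i : w_l\le j\}\subseteq \{l\le i : w_l\le j+1\}$ gives $b^w_{ij}\le b^w_{i,j+1}$. The difference between the two sides is $\#\{l\le i : w_l=j+1\}$, which is $0$ or $1$ because the long form of $w$ is a bijection of $\{1,\ldots,2n\}$. This yields the right inequality. For (3), when $w\in\cW$ the definition $a_i=\#\{k\le i : w_k>0\}$ translates, under the short-to-long passage (\ref{eqn:w-long}), into $a_i=\#\{k\le i : w_k\le n\}=b^w_{in}$, which is the claim.

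The content is in (1). Fix $i$ and let $u^w_1<u^w_2<\cdots<u^w_i$ be the increasing rearrangement of $w_1,\ldots,w_i$ (long form), and analogously for $y$. The key observation is that $b^w_{ij}$ equals the largest $k$ with $u^w_k\le j$ (set to $0$ if none), and dually $u^w_k=\min\{j : b^w_{ij}\ge k\}$. Hence for each triple $(i,j,k)$ we have the equivalence
\begin{equation*}
u^w_k\le j\ \Longleftrightarrow\ b^w_{ij}\ge k.
\end{equation*}
The tableau criterion displayed just before the lemma says $y\le w$ iff $u^y_k\le u^w_k$ for every $i$ and every $k\le i$. Rewriting $u^y_k\le u^w_k$ as ``for every $j$, $u^w_k\le j$ implies $u^y_k\le j$'' and applying the equivalence twice converts this into ``for every $j$, $b^w_{ij}\ge k$ implies $b^y_{ij}\ge k$,'' which in turn is equivalent to $b^y_{ij}\ge b^w_{ij}$ for all $i,j$. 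Both directions of (1) then follow simultaneously.

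The only step requiring real care is the dictionary between the sorted-sequence viewpoint and the counting function $b_{ij}$; but once the reconstruction $u^w_k=\min\{j : b^w_{ij}\ge k\}$ is in hand, the equivalence is purely formal and no serious obstacle remains.
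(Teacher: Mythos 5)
Your proof takes essentially the same approach as the paper's: both derive part (1) by translating the tableau (Ehresmann/prefix) criterion quoted just above the lemma into the language of the counting functions $b_{ij}$. The paper's version phrases the translation directly as a comparison of how many entries of $\{y_1,\dots,y_i\}$ versus $\{w_1,\dots,w_i\}$ lie below each threshold $j$; you arrive at the same equivalence via the reconstruction formula $u^w_k=\min\{j:b^w_{ij}\ge k\}$, which is a tidy and slightly more explicit way to package the same dictionary. Parts (2) and (3) are disposed of in the same way in both.

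One small point worth flagging, which is present in the paper's own two-line proof just as in yours: the translation only establishes the equivalence ``$y\le w$ iff $b^y_{ij}\ge b^w_{ij}$ \emph{for all $j\le 2n$},'' whereas the lemma's display restricts to $j=1,\dots,n$. Since $u^w_k$ takes values in $\{1,\dots,2n\}$, your chain of equivalences genuinely needs $j$ to run through $1,\dots,2n$. For the converse implication as literally stated (with $j\le n$ only) one needs additional input; in the paper this is supplied by the formula (\ref{eqn:fact-1}), valid for $w\in\cW$, which is how the lemma is actually used in the proof of Proposition \ref{prop:sb}. You are not more at fault than the authors here, but if you want a watertight argument you should either prove the equivalence for $j\le 2n$ and note that the restriction to $j\le n$ suffices when $w\in\cW$, or invoke the symmetry $u_\ell+u_{2n-\ell+1}=2n+1$ of the long form to reduce the $j>n$ thresholds to the $j\le n$ ones.
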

\begin{proof} (1) If $b_{ij}^y\geq b_{ij}^w$ for all $j$, then $\{y_1,\dots,y_k\}$ has at least as many elements less than $j$ as does $\{w_1,\dots,w_k\}$, for all $j$.  So $y\leq w$.  The converse is essentially the same argument.  \newline
\noindent (2) and (3) are immediate.
\end{proof}

It might be useful to consider the following example.    Let $w=(\text{-}143\text{-}2)$; in the long form $w=(8437|2651)$. Then $\tilde{a}=(0,1,2,2)$.  The following table gives the values of $b_{ij}^w$.
\hspace{2cm}
%\begin{table}\vspace{3cm}
 %   \centering
    
\begin{tabular}{|c|cccc|cccc|}
\hline
 $_i\backslash ^j$   & 1 & 2 & 3 & 4 & 5 & 6& 7& 8\\
\hline
1  & 0 & 0 & 0 & 0 & 0 & 0 &0  & 1 \\
2  & 0 & 0 & 0 & 1 & 1 & 1 & 1 & 2     \\
3  & 0 & 0 & 1 & 2 & 2  & 2 & 2 & 3     \\
4  & 0 & 0 & 1 & 2 &  2 & 2 & 3 & 4 \\
\hline
5  & 0 & 1 & 2 & 3 & 3 &3  & 4 &  5    \\
6  & 0 & 1 & 2 & 3 & 3  & 4 & 5 & 6    \\
7  & 0 & 1 & 2 & 3 & 4  & 5 & 6 & 7    \\
8  & 1 & 2 & 3 & 4 &  5 & 6 & 7 & 8  \\
\hline
\end{tabular}\bigskip\newline
Observe that $\tilde{a}$ appears as the fourth column as required by part (3) of the lemma.  Part (a) of the lemma tells us that the upper left block determines the Bruhat order.  Write 
\begin{equation*}
F(\tilde{a}):=\{(F_i)\in\fB : \dim(F_i\cap V_n)\geq a_i, i=1,\dots,n\}
\end{equation*}
For $(F_i)\in F(\tilde{a})$, part (2) of the lemma (applied to $y$) tells us that $b_{ij}^y\geq b_{ij}^w$, $i,j=1,\dots,4$.  It is important that for each $i$, as $j$ runs from $4$ down to $1$, $b_{ij}^w$ decreases by exactly $1$, until it reaches $0$.  Therefore, if $B_y\subseteq F(\tilde{a})$, then by part (1), $y\leq w$.  We conclude that if $B_y\subseteq F(\tilde{A})$, then $B_y\subseteq Z_w$.  Conversely, if $B_y\subset Z_w$ , then $b_{in}^y\geq b_{in}^w=a_i$, so $B_y\subset F(\tilde{a})$.  Therefore, 
$$ F(\tilde{a})=\bigcup_{y\leq w} B_y=Z_w.
$$

\noindent\emph{Proof of the proposition.}  Let $w\in \cW$, so the entries of $-w\rho$ are decreasing (left to right).  This translates, for $w$ as in (\ref{eqn:w-long}), to those entries of $w$ between $1$ and $n$ decreasing (left to right) and the same for those entries between $n+1$ and $2n$.  Let $\tilde{a}^w$ be defined as in (\ref{eqn:a}) for $w$.  Then several facts hold.
\newline\noindent (1) For each $i=1,\dots,n$,
\begin{equation}\label{eqn:fact-1}
b_{ij}^w=\begin{cases} a_i-(n-j), &j=n-a_i,\dots, n-1,n  \\
                       0, & \text{otherwise}.
                       \end{cases}
\end{equation}                      
This follows from the fact that the entries of $w$ between $1$ and $n$ decrease.
\newline\noindent (2) If $b_{ij}^y\geq b_{ij}^w$, all $i,j=i,\dots,n$, then $B_y\subset F(\tilde{a}^w).$  This is clear since the condition implies $b_{in}^y\geq b_{in}^w=a_i^w$.
\newline\noindent (3) $B_y\subset F(\tilde{a}^w)$ implies $b_{ij}^y\geq b_{ij}^w$, $i,j=1,\dots,n$.  For this we assume that $b_{in}^y\geq a_i^w, i=1,\dots,n$.  Fact (1) and part (2) of the lemma implies $b_{ij}^y\geq b_{ij}^w$, $i,j=1,\dots,n$.

We may conclude from these facts that 
$$ F(\tilde{a}^w)=\bigcup_{y\leq w} B_y=Z_w.
$$

\vspace{-35pt}
\hfill{$\square$}

\vspace{30pt}
Recall that (in general) if $\ga$ is a simple root, then $w(\ga)>0$ if and only if $\ell(ws_\ga)=\ell(w)+1.$  Write $\ga_1,\dots,\ga_n$ for the simple roots numbered as in (\ref{eqn:simple}), and $s_1,\dots,s_n$ for the corresponding simple reflections $s_{\ga_1},\dots,s_{\ga_n}$.  For $w\in\cW$, $w(\ga_k)>0$ if and only if $w_k\leq n<w_{k+1}$, when $k<n$, and $w_n<n$, when $k=n$.  This is the case if and only if
$$ b_{ij}^{ws_k}=\begin{cases}
                     b_{ij}^w-1, & (i,j)=(k,w_k)  \\
                     b_{ij}^w,  &\text{otherwise}.
                 \end{cases}    
$$
Also, it follows from part (1) of the lemma and (\ref{eqn:fact-1}) that if $y,w\in\cW$, then 
\begin{equation}\label{eqn:cl-a}
  y\leq w \text{ if and only if } a_i^y\geq a_i^w, i=1,\dots,n.
\end{equation}
Our description of the Schubert varieties $Z_w$, $w\in\cW$ gives the following corollary.
\begin{Cor}\label{cor:order}
If $y,w\in\cW$ and $y\leq w$, then there exist simple roots $\ga_{i_1}\dots,\ga_{i_m}$ so that $ys_{i_1}\cdots s_{i_m}=w$ with $\ell(ys_{i_1}\dots s_{i_k})=\ell(y)+k$ and $ys_{i_1}\cdots s_{i_k}\in\cW$, for all $k=1,\dots,m$.
\end{Cor}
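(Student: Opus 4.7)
The plan is an induction on $\ell(w)-\ell(y)$. For the inductive step, with $y<w$ in $\cW$, I aim to find a simple reflection $s_j$ such that $ys_j\in\cW$, $\ell(ys_j)=\ell(y)+1$, and $ys_j\leq w$; iterating this produces the chain asserted in the corollary.

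The proof uses two ingredients from the preceding discussion. First, equation~(\ref{eqn:cl-a}) says $y\leq w$ is equivalent to $a_i^y\geq a_i^w$ for all $i$. Second, for $y\in\cW$, the condition $\ell(ys_j)=\ell(y)+1$ is equivalent (by the length criterion stated just before~(\ref{eqn:cl-a})) to $y_j>0,\,y_{j+1}<0$ when $j<n$, or $y_n>0$ when $j=n$; moreover $ys_j$ then lies in $\cW$ by Prop.~\ref{prop:w-c}. A direct computation (swapping a $+$ with an adjacent $-$, or flipping the sign of $y_n$) shows that $y\mapsto ys_j$ decreases $a_j^y$ by exactly $1$ and fixes all other coordinates of $\tilde a^y$. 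Consequently it suffices to find $j$ for which the operation at $j$ is allowed in the above sense and $a_j^y>a_j^w$; then $a_j^{ys_j}=a_j^y-1\geq a_j^w$ while the other coordinates are unchanged, so $ys_j\leq w$.

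Set $P_y=\{i:y_i>0\}$ and $I=\{i:a_i^y>a_i^w\}$, and take $j:=\max(P_y\cap I)$. Two verifications are needed. For nonemptiness of $P_y\cap I$: let $i^\circ=\min I$, set $a_0^y=a_0^w=0$, and note that $a_i^y-a_{i-1}^y\in\{0,1\}$ equals $1$ precisely when $i\in P_y$; if $i^\circ\notin P_y$ then $a_{i^\circ-1}^y=a_{i^\circ}^y>a_{i^\circ}^w\geq a_{i^\circ-1}^w$, forcing $i^\circ-1\in I$ and contradicting minimality. For allowedness of the operation at $j$: the case $j=n$ is immediate; when $j<n$, if $j+1\in P_y$ then $a_{j+1}^y=a_j^y+1$ and $a_{j+1}^w\leq a_j^w+1$ give $a_{j+1}^y-a_{j+1}^w\geq a_j^y-a_j^w>0$, so $j+1\in P_y\cap I$, violating maximality of $j$.

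The main obstacle is this second verification: one must rule out that the rightmost element of $P_y\cap I$ is immediately followed in $P_y$ by another, which would spoil the $+\to-$ adjacency required for the operation to be allowed. The computation above dispatches this by observing that any such successor would automatically land in $P_y\cap I$ as well. Everything else is routine bookkeeping on the differences $a_i^y-a_{i-1}^y$ and $a_i^w-a_{i-1}^w$.
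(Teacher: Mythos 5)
Your argument is correct, and it takes a genuinely different route from the paper's. Both proofs induct on $\ell(w)-\ell(y)$ and both hunt for a single simple reflection $s_j$ with $ys_j\in\cW$, $\ell(ys_j)=\ell(y)+1$, and $ys_j\leq w$; the difference is the choice of $j$. The paper sets $l=\min\{i:a_i^y>a_i^w\}$, observes that the first $l-1$ entries of $y$ and $w$ must coincide (using the rigid decreasing structure of entries for elements of $\cW$), lets $q_1,\dots,q_r$ be the maximal run of positive entries of $y$ starting at position $l$, and reflects at the position $k=l+r-1$ of $q_r$; the key point that $a_k^y>a_k^w$ (and hence $ys_k\leq w$) is left implicit but follows from the telescoping inequality $a_k^y-a_k^w\geq a_l^y-a_l^w>0$ along the run. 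You instead take $j=\max(P_y\cap I)$, the rightmost positive position at which the $a$-vectors still disagree. Your choice makes the two verifications very clean: nonemptiness of $P_y\cap I$ falls out of comparing $a_{i^\circ-1}$ with $a_{i^\circ}$ at $i^\circ=\min I$, and allowedness at $j$ follows because any positive successor $j+1$ would automatically lie in $P_y\cap I$ and contradict maximality. Your writeup is also more explicit than the paper's in checking $ys_j\leq w$. The one small wrinkle: you cite Proposition~\ref{prop:w-c} for the fact that $ys_j\in\cW$; the precise statement you want (that $\cW$ is closed under $w\mapsto ws_\ga$ when $w\ga>0$, equivalently under the operations (\ref{eqn:o})) is in the discussion surrounding Proposition~\ref{prop:w-c}, in \S\ref{subsec:bo}, rather than in the proposition itself. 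That is a citation nicety, not a gap.
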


In other words, the closure relations among Schubert varieties $Z_w$, $w\in\cW$, are `generated by' \emph{simple} reflections.  This corollary follows from similar statement in \cite[\S1]{LakshmibaiWeyman90}.

\begin{proof}
We prove the statement by induction on $m=\ell(w)-\ell(y)$.  When $m=1$ the statement is immediate.  Assume $m>1$ and let $l$ be the smallest index for which $a_l^y>a_l^w$.  Thus, $a_k^y=a_k^w$, $k=1,\dots,l-1$, so by (\ref{eqn:fact-1}) the first $l-1$ entries of $y$ and $w$ coincide.  Let $w_l=p, y_l=q_1$; therefore $q_1\leq n<p$ and 
\begin{align*}
  &w=(\cdots \;p\;\;\cdots\;\; \;\;\cdots\;\;| \;\;\cdots \;\;)  \\
  &w=(\cdots\; q_1\;\cdot \,\,q_rp\;\cdots \;| \;\;\cdots \;\;), \;\; q_i\leq n,  \\
\intertext{or}
  &w=(\cdots \;p\;\;\cdots\;\cdots\;| \;\;\cdots \;\;)  \\
  &w=(\cdots\; q_1\;\;\cdots \;\;q_r\;| \;\;\cdots \;\;),\;\; q_i\leq n.  \\
\end{align*}
Suppose that $q_r=y_k$.  Then in both cases $ys_k\geq y$ and $ys_k\in\cW$.  Therefore, induction applies giving  $i_2,\dots,i_m$ with $ys_1s_{i_2}\dots s_{i_m}$ as in the statement of the corollary.
\end{proof}

%%%

\section{} \label{app:B}  
It is well-known that the associated variety of a highest weight Harish-Chandra module is contained in $\fp_+$ (\cite{NishiyamaOchiaiTaniguchi01}).  In this appendix we we show the converse.  It is likely that this fact is known, but we include a proof since we could not find one in the literature.  

Let $X$ be  a finitely generated Harish-Chandra module. The associated variety of $X$ may be computed as follows (\cite{Vogan91}).  Choose a finite dimensional $K$-stable generating subspace of $X$ and filter $X$ by $X_n:=\Cal U_n(\fg)X_0$.  Here $\Cal U_n(\fg)$ is the usual filtration of $\Cal U(\fg)$ by degree.  Let $I_X$ be the annihilator of $\gr(X)=\oplus X_n/X_{n-1}$ in $P(\fg^*)$ ($\simeq S(\fg)\simeq \gr(\Cal U_n(\fg))$).  Then the associated variety of $X$ is the affine variety in $\fg^*$ on which all polynomials in $I_X$ vanish.  Since the filtration is $K$-stable,  $AV(X)\subset (\fg/\fk)^*$.  Under the identification $\fg^*\simeq\fg$ (via the Killing form) we have $(\fg/\fk)^*\simeq\fp$ and $P(\fg^*)\simeq P(\fg)$, and we identify $I_X$ with an ideal in $P(\fg)$.

Now assume $G_\R$ is a group of hermitian type and $X$ is any irreducible Harish-Chandra module.  Suppose $X$ has a highest weight vector (with respect to a positive system containing $\gD(\fp_+)$).  Take $X_0$ to be the $K$-type generated by the highest weight vector, then $X_n=\Cal U_n(\fp_+)X_0$, so $\fp_+\subset I_X$.  It follows that $AV(X)\subset\fp_+$.  We show the converse.

\begin{Prop}\label{prop:converse} Suppose $G_\R$ is of hermitian type  and $X$ is any irreducible Harish-Chandra module for which $AV(X)\subset \fp_+$.  Then $X$ is a highest weight module.
\end{Prop}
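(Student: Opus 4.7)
We will show that $X^{\fp_+}:=\{v\in X:\fp_+ v=0\}$ is nonzero.  Since $[\fk,\fp_+]\subset\fp_+$, the subspace $X^{\fp_+}$ is $K$-stable, so it contains an irreducible $K$-submodule $V_\lambda$; the $(\fg,K)$-submodule generated by $V_\lambda$ is then a nonzero submodule of $X$, hence all of $X$ by irreducibility, realizing $X$ as a quotient of $\Cal U(\fg)\otimes_{\Cal U(\fk+\fp_+)}V_\lambda$, i.e., a highest weight Harish-Chandra module.  The hypothesis $AV(X)\subset\fp_+$ is first translated as follows: $\gr X$ is an $S(\fp)$-module, and under the Killing-form identification $\fp^*\simeq\fp$ the ideal in $S(\fp)$ of polynomial functions vanishing on $\fp_+$ is generated by $\fp_+\subset S^1(\fp)$ (because the Killing form pairs $\fp_+$ with $\fp_-$ nondegenerately while each $\fp_\pm$ is isotropic).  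Thus each $Y\in\fp_+$ acts nilpotently on $\gr X$, and by finite generation there exists $N$ with $S^N(\fp_+)\cdot\gr X=0$, equivalently $S^N(\fp_+)X_n\subset X_{n+N-1}$.

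\textbf{Step A (cyclic $\Cal U(\fp_+)$-modules are finite-dimensional).}  For $v\in X_{n_0}$, set $W_v:=\Cal U(\fp_+)v=S(\fp_+)v$ (using that $\fp_+$ is abelian).  The leading-term map gives an injection $(W_v)_n/(W_v)_{n-1}\hookrightarrow S^{n-n_0}(\fp_+)\bar v\subset\gr^n X$, and the right-hand side vanishes once $n-n_0\ge N$.  Hence the filtration $(W_v)_n$ stabilizes at level $n_0+N-1$, each graded piece below is finite-dimensional, and so $W_v$ is finite-dimensional.

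\textbf{Step B (local nilpotence of $\fp_+$ on $X$ --- the main obstacle).}  Nilpotence on $\gr X$ does not formally imply nilpotence on $X$: lifting the filtered action could produce Jordan blocks with nonzero eigenvalue.  A weight argument rules these out.  Fix a maximal torus $T\subset K$ and let $Y\in\fp_+$ be a root vector of (necessarily nonzero) weight $\alpha$.  Any $v\in X$ admits a finite $T$-weight decomposition $v=\sum_\mu v_\mu$.  If $Yv=\lambda v$ with $\lambda\ne 0$, matching $T$-weights gives $\lambda v_\mu=Y v_{\mu-\alpha}$, hence inductively $v_\mu=\lambda^{-k}Y^k v_{\mu-k\alpha}$ for every $k\ge 1$; the finiteness of the $T$-support of $v$ then forces $v=0$.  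The same support comparison rules out generalized eigenvectors.  Applied to the finite-dimensional $W_v$ of Step A, every root vector of $\fp_+$, and hence by linearity every element of $\fp_+$, acts nilpotently on $W_v$.

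\textbf{Step C (conclusion).}  For $0\ne v\in X$ put $W:=\Cal U(\fk)W_v=\Cal U(\fk+\fp_+)v$, using the PBW factorization $\Cal U(\fk+\fp_+)=\Cal U(\fk)\Cal U(\fp_+)$.  Since $W_v$ is finite-dimensional and consists of $K$-finite vectors, $W$ is finite-dimensional; it is visibly $K$-stable and, via $[\fk,\fp_+]\subset\fp_+$, also $\fp_+$-stable.  The abelian algebra $\fp_+$ acts on $W$ by commuting nilpotent operators (Step B), so Engel's theorem yields a nonzero vector in $W^{\fp_+}\subset X^{\fp_+}$, and the argument of the Plan completes the proof.
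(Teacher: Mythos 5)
Your overall plan (produce a nonzero vector annihilated by $\fp_+$, then generate a highest weight submodule) is the right one and matches the paper's, and Steps~B and~C are correct.  The problem is in Step~A.  The claimed injection
$(W_v)_n/(W_v)_{n-1}\hookrightarrow S^{n-n_0}(\fp_+)\bar v$
does not hold, and the two natural readings of the filtration $(W_v)_n$ each fail.  If $(W_v)_n:=W_v\cap X_n$, the map into $\gr^n X$ is injective but its image need not lie in $S^{n-n_0}(\fp_+)\bar v$: an element $Pv$ with $\deg P$ large can drop to filtration degree $n<n_0$, where $S^{n-n_0}(\fp_+)\bar v=0$.  If instead $(W_v)_n:=\Cal U_{n-n_0}(\fp_+)v$, the symbol map $[Pv]\mapsto P\bar v$ does land in $S^{n-n_0}(\fp_+)\bar v$ but need not be injective, because $Pv\in X_{n-1}$ does not force $Pv\in(W_v)_{n-1}$.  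A concrete counterexample to the asserted injection already occurs for $\fg=\f{sl}_2$: take $X=L(\lambda)$ an infinite-dimensional highest weight module, $X_0=\C v^+$, and $v=F^{10}v^+\in X_{10}$; then $W_v=\Cal U(E)v$ has dimension $11$ while $E\bar v=0$ in $\gr X$ (so $S^{k}(\fp_+)\bar v=0$ for all $k\ge 1$), and the graded pieces of $W_v$ do not embed in $S^{\bullet}(\fp_+)\bar v$ in either sense.  So the finite-dimensionality of $W_v$ is true but not for the reason you give, and the chain of implications as written is broken.

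The gap is repairable, and the repair is close in spirit to your Step~B.  Since $AV(X)\subset\fp_+$, the Nullstellensatz gives not just $\fp_+\subset\sqrt{I_X}$ but $\fk+\fp_+\subset\sqrt{I_X}$, hence $(\fk+\fp_+)^M\gr X=0$ for some $M$ by Noetherianity.  Therefore $\gr X$ is a finitely generated module over $S(\fg)/(\fk+\fp_+)^M$, which is finite over $S(\fp_-)$; so $\gr X$ is a finitely generated $S(\fp_-)$-module.  Let $H\in Z(\fk)$ be the grading element with $\ad(H)=\pm1$ on $\fp_\pm$.  Since $\fp_-$ lowers the $H$-eigenvalue and $\gr X$ has finitely many $S(\fp_-)$-generators, the $H$-eigenvalues of $\gr X$, and hence of $X$, are bounded above.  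Each root vector in $\fp_+$ strictly raises the $H$-eigenvalue, so $\fp_+$ acts locally nilpotently on $X$; this gives both the finite-dimensionality of each $\Cal U(\fp_+)v$ and the nilpotence you establish in Step~B in one stroke, after which your Step~C (or simply Engel applied to $\fn$ on a finite-dimensional $\fk+\fp_+$-stable subspace) finishes the proof.  For comparison, the paper works with the space $Y_{m-1}=\Cal U_{m-1}(\fp_+)X_0$ and argues it is $\fp_+$-stable directly; once one has a finite-dimensional $\fk+\fp_+$-stable subspace, your Step~B is actually not needed, since on a finite-dimensional $\fk$-module the weights are bounded and $\fp_+$-root vectors raise them, giving nilpotence for free.
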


\begin{proof}  Let $\{x_1,\dots, x_d\}$ be a basis of $\fp_+$.  Then there is an $m'\in\N$ so that $x_i^{m'}\in I_X$, for all $i$, since $\fp_+\subset\{p\in P(\fg) : p \text{ vanishes on }AV(X)\}=\sqrt{I_X}$.  Now take $m=m'd$ and we see that any product $x_1\dots x_m\in I_X$ with $x_i\in\fp_+$.  Therefore,
\begin{equation}\label{eqn:a-2-1} \Cal U_m(\fp_+):=\Cal U_m(\fg)\cap\Cal U(\fp_+)\subset I_X.
\end{equation}
Set $Y_n:=\Cal U_n(\fp_+)X_0$.  Then we have 
$$  \fp_+Y_{m-1}=\Cal U_m(\fp_+)X_0\subset X_{m-1},
$$
(by the definition of $\gr(X_n)$).  Therefore,
$$\fp_+Y_{m-1}\subset X_{m-1}\cap Y_m=Y_{m-1}.
$$
Thus, $Y_{m-1}$ is finite dimensional and $\fk+\fp_+$ stable. So if $Y_{m-1}\neq 0$, then $Y_{m-1}$ contains a highest weight vector.  If $Y_{m-1}=0$, then choose the largest $k\leq m-1$ for which $Y_k\neq 0$.  Then $Y_k$ has a highest weight vector.
\end{proof}

\providecommand{\bysame}{\leavevmode\hbox to3em{\hrulefill}\thinspace}
\providecommand{\MR}{\relax\ifhmode\unskip\space\fi MR }
% \MRhref is called by the amsart/book/proc definition of \MR.
\providecommand{\MRhref}[2]{%
  \href{http://www.ams.org/mathscinet-getitem?mr=#1}{#2}
}
\providecommand{\href}[2]{#2}

\end{document}